\newtheorem{theorem}{Theorem}[section]
\newtheorem{corollary}[theorem]{Corollary}
\newtheorem{lemma}[theorem]{Lemma}
\newtheorem{proposition}[theorem]{Proposition}
\newtheorem{definition}[theorem]{Definition}
\newtheorem{example}[theorem]{Example}
\newtheorem{notation}[theorem]{Notation}
\newtheorem{remark}[theorem]{Remark}
\newcommand{\BbR}{\mathbb{R}}
\begin{document}
\title[Local dimensions]{Local dimensions of measures of finite type on the
Torus.}
\author[K.~E.~Hare, K.~G.~Hare, K.~R. Matthews]{Kathryn E. Hare, Kevin G.
Hare, Kevin R. Matthews}
\thanks{Research of K. G. Hare was supported by NSERC Grant RGPIN-2014-03154}
\thanks{Research of K. E. Hare and K. R. Matthews was supported by NSERC
Grant 44597-2011}
\address{Dept. of Pure Mathematics, University of Waterloo, Waterloo, Ont.,
N2L 3G1, Canada}
\email{kehare@uwaterloo.ca}
\address{Dept. of Pure Mathematics, University of Waterloo, Waterloo, Ont.,
N2L 3G1, Canada}
\email{kghare@uwaterloo.ca}
\address{Dept. of Pure Mathematics, University of Waterloo, Waterloo, Ont.,
N2L 3G1, Canada}
\email{kevinmatthews12@hotmail.com}

\begin{abstract}
The structure of the set of local dimensions of a self-similar measure has
been studied by numerous mathematicians, initially for measures that satisfy
the open set condition and, more recently, for measures on $\mathbb{R}$ that
are of finite type.

In this paper, our focus is on finite type measures defined on the torus,
the quotient space $\mathbb{R}\backslash \mathbb{Z}$. We give criteria which
ensures that the set of local dimensions of the measure taken over points in
special classes generates an interval. We construct a non-trivial example of
a measure on the torus that admits an isolated point in its set of local
dimensions. We prove that the set of local dimensions for a finite type
measure that is the quotient of a self-similar measure satisfying the strict
separation condition is an interval. We show that sufficiently many
convolutions of Cantor-like measures on the torus never admit an isolated
point in their set of local dimensions, in stark contrast to such measures
on $\mathbb{R}$. Further, we give a family of Cantor-like measures on the
torus where the set of local dimensions is a strict subset of the set of
local dimensions, excluding the isolated point, of the corresponding
measures on $\mathbb{R}$. 
\end{abstract}

\maketitle

\section{Introduction}

\label{sec:intro} The local dimension of a probability measure $\mu $
defined on a metric space, at a point $x$ in the support of $\mu ,$ is the
number%
\begin{equation*}
\dim _{loc}\mu (x)=\lim_{r\rightarrow 0}\frac{\log \mu (B(x,r))}{\log r}.
\end{equation*}%
It is of interest to determine the local dimensions of a given measure as
these numbers quantify the local concentration of the measure. For
self-similar measures that satisfy the open set condition (OSC), it is well
known that the set of local dimensions is a closed interval whose endpoints
are given by a simple formula.

When the OSC condition fails, the situation is more complicated, less
understood and can be quite different. Indeed, in \cite{HL}, Hu and Lau
discovered that when $\mu $ is the three-fold convolution of the classic
middle-third Cantor measure on $\mathbb{R}$, then there is an isolated point
in the set of local dimensions, namely at $x=0,3$. This fact
was later established for other `overlapping' Cantor-like measures in \cite%
{BHM, FL, FHJ,Sh}.

These Cantor-like measures are special examples of equicontractive
self-similar measures of finite type on $\mathbb{R}$, a notion introduced by
Ngai and Wang \cite{NW} that is weaker than the OSC, but stronger than the
weak separation condition \cite{Ng}. Such measures have a very structured
geometry, which makes them more tractable than arbitrary self-similar
measures. In \cite{F3,F1,F2}, Feng conducted a detailed study of
equicontractive, self-similar measures of finite type on $\mathbb{R}$,
focussing primarily on Bernoulli convolutions with contraction factors the
inverse of a simple Pisot number. In \cite{HHM, HHN}, the authors, together
with Ng, showed that if $\mu $ is a measure of finite type on $\mathbb{R}$
that arises from regular probabilities and has full interval support, then
the set of local dimensions of $\mu $ at the points in any positive loop
class is a closed interval and the local dimensions at periodic points in
the loop class are dense within this interval. See Section \ref{basicdef}
for a definition of regular probabilities. Moreover, there is
always a distinguished positive loop class, known as the essential class,
which has full measure and is often all but the endpoints of the support of
the measure. This is the situation for the Bernoulli convolutions with
contraction a simple Pisot inverse and the 3-fold convolution of the
middle-third Cantor measure, for instance.
If the self-similar measure does not
arise from regular probabilities, it is still true that the set of local
dimensions at points in the interior of the essential class is a closed
interval.

In this paper, we introduce the notion of finite type for measures on the
torus $\mathbb{T}$ (the quotient space $\mathbb{R}\backslash \mathbb{Z}$)
that are quotients of equicontractive, self-similar measures on $\mathbb{R}.$
Examples of such measures include convolutions on the torus of Cantor
measures or Bernoulli convolutions with contraction factor the inverse of a
Pisot number. These measures are the quotients of the convolutions on $%
\mathbb{R}$ of the (initial) Cantor measures or Bernoulli convolutions.

We develop a general method for calculating the local dimensions of finite
type measures on $\mathbb{T}$ and obtain a simple formula for the local
dimensions at the periodic points. With these tools, the same techniques as
used for finite type measures on $\mathbb{R}$ show that if the self-similar
measure is associated with regular probabilities and the quotient measure
has support $\mathbb{T}$, then the set of local dimensions of the quotient
measure at the points in any positive loop class is an interval. If we do
not assume regular probabilities, under a mild technical assumption it is
still true that the set of local dimensions at points in the interior of any
positive essential class is an interval. As with finite type measures on $%
\mathbb{R}$, in either case this interval is the closure of the local
dimensions at the periodic points in the class. 
However, in contrast to the case for finite type measures on 
$\mathbb{R}$, the essential class for measures of finite type on $\mathbb{T}$
need not be unique or of positive type.

We use these results to prove that if a self-similar measure of finite type
on the torus is associated with an IFS that satisfies the strong separation
condition, then the set of local dimensions of the quotient measure is not
only an interval, but coincides with the set of local dimensions of the
original measure. We also give the first (as far as we are aware)
non-trivial example of a quotient measure on $\mathbb{T}$ whose set of local
dimensions admits an isolated point.

In \cite{BHM}, it was shown that the sets of local dimensions for quotients
of $k$-fold convolutions of Cantor measures with contraction factor $1/d$
are intervals whenever $k\geq d$. Although these quotient measures do not
have an essential class of positive type, we are able to modify our general
approach to give a new proof of this fact. Moreover, we extend this result
to what we call complete quotient Cantor-like measures and also prove that
the set of local dimensions is the closure of the set of local dimensions of
periodic points.

In \cite{FHJ} it was explicitly shown that set of local dimensions of the $3$%
-fold convolution of the Cantor measure with contraction factor $1/3$ on the
torus is a strict subset of the set of local dimensions at points in the
essential class, $(0,3),$ of the corresponding measure on $%
\mathbb{R}$. The authors also comment that a
similar proof can be used for all $d$-fold convolutions of the Cantor
measure with contraction factor $1/d$ for $d\geq 3$. In the last section we
extend this result to show that for all $k\geq 0$ and all $d$ sufficiently
large the $(k+d)$-fold convolution of the Cantor measure with contraction
factor $1/d$ shares this property.

\section{Finite type quotient measures on $\mathbb{T}$}

\subsection{Basic definitions and notation\label{basicdef}}

Assume 
\begin{equation*}
\{S_{j}(x)=\varrho x+d_{j}:j=0,\dots ,k\}
\end{equation*}%
is an iterated function system (IFS) of equicontractive similarities on $%
\mathbb{R}$ and let $p_{j}$ for $j\in \mathcal{A}=\{0,...,k\}$ denote
probabilities, meaning $p_{j}>0$ and $\sum p_{j}=1$. By the associated
self-similar measure $\mu $, we mean the unique measure satisfying the
identity 
\begin{equation}
\mu =\sum_{j=0}^{k}p_{j}\mu \circ S_{j}^{-1}.  \label{ssmeas}
\end{equation}%
Its support is the associated self-similar set $K\subseteq \mathbb{R}$, the
unique non-empty compact set satisfying $K=\bigcup\limits_{j=0}^{k}S_{j}(K).$

There is no loss of generality in assuming $d_{0}<d_{1}<\cdot \cdot \cdot
<d_{k}$. If $p_{0}=p_{k}=\min p_{j}$, then the probabilities are referred to
as \textit{regular}.

We first recall what it means to say such an equicontractive IFS or
self-similar measure on $\mathbb{R}$ is of finite type.

\begin{definition}
The iterated function system, $\{S_{j}(x)=\varrho x+d_{j}:j=0,\dots ,k\}$,
is said to be of \textbf{finite type} if there is a finite set $F\subseteq 
\mathbb{R}$ such that for each positive integer $n$ and any two sets of
indices $\sigma ,\tau $ $\in \mathcal{A}^{n}$, either 
\begin{equation*}
\varrho ^{-n}\left\vert S_{\sigma }(0)-S_{\tau }(0)\right\vert >\delta \text{
or }\varrho ^{-n}(S_{\sigma }(0)-S_{\tau }(0))\in F,
\end{equation*}%
where $\delta =(1-\varrho )^{-1}(\max d_{j}-\min d_{j})$ is the diameter of $%
K$. If $\{S_{j}\}$ is of finite type and $\mu $ is an associated
self-similar measure, we also say that $\mu $ is of finite type.
\end{definition}

Hereafter, we will refer to this notion as `finite type on $\mathbb{R}$' to
distinguish it from the notion of `finite type on the torus', which will be
the focus of this paper and will be defined shortly.

Measures which satisfy the open set condition are of finite type on $\mathbb{%
R}$ and measures of finite type on $\mathbb{R}$ satisfy the weak open set
condition \cite{Ng}. The structure of finite type measures and aspects of
their multi-fractal analysis is explained in detail in \cite{F3,F1,F2, HHM,
HHN}.

\begin{example}
The self-similar measures associated with the IFS $\{S_{0}(x)=\varrho x$, $%
S_{1}(x)=\varrho x+(1-\varrho )\}$ and probabilities $p_{0}=p_{1}=1/2$ are
known as (uniform) Bernoulli convolutions\textbf{\ }when $\varrho >1/2$ and
Cantor measures when $\varrho <1/2$. Cantor measures satisfy the OSC.
Bernoulli convolutions do not, but when $\varrho $ is the inverse of a Pisot
number\footnote{%
A Pisot number is an algebraic integer greater than one, all of whose Galois
conjugates are strictly less than one in modulus.}the measures are of finite
type.

Given two measures $\mu ,$ $\nu $ on $\mathbb{R}$, by their convolution, $%
\mu \ast \nu ,$ we mean the measure on $\mathbb{R}$ defined by%
\begin{equation*}
\mu \ast \nu (E)=\int_{\mathbb{R}}\mu (E-x)\text{ }d\nu (x)\text{ for all
Borel sets }E\subseteq \mathbb{R}\text{.}
\end{equation*}%
The $k$-fold convolutions of the Bernoulli convolutions or Cantor measures
are (also) the self-similar measures associated with the IFS $%
\{S_{j}(x)=\varrho x$ $+j(1-\varrho ):$ $j=0,1,...,k\}$ and probabilities $%
p_{j}=\binom{k}{j}2^{-k}$. These measures do not satisfy the OSC when $k\geq
1/\varrho $, but are of finite type on $\mathbb{R}$ whenever $\varrho $ is
the inverse of a Pisot number, such as a positive integer \cite{NW}.
\end{example}

By the torus, $\mathbb{T}$, we mean the quotient group, $\mathbb{R}%
\backslash \mathbb{Z}$. We let $\pi $ denote the canonical quotient map and
denote the usual metric on $\mathbb{T}$ by $d_{\mathbb{T}}$. Sometimes it is
convenient to identify the torus as the group $[0,1)$ under addition mod $1$.

Given a measure $\mu $ on $\mathbb{R}$, we let $\mu _{\pi }$ be the quotient
measure defined by $\mu _{\pi }(E)=\mu (\pi ^{-1}(E))$ for any Borel set $%
E\subseteq \mathbb{T}$. Of course, if $\mu $ has support $K$, then $\mu
_{\pi }$ has support $\pi (K)$. If $\mu $ has support contained in $[0,1]$,
the only difference between the local dimensions of $\mu $ and $\mu _{\pi }$
is that (identifying $\mathbb{T}$ with $[0,1)$) $\dim _{loc}\mu _{\pi
}(0)=\min (\dim _{loc}\mu (0),\dim _{loc}\mu (1))$, so this situation is
trivial.

\begin{definition}
\label{defn:finite type T} The iterated function system, $\{S_{j}(x)=\varrho
x+d_{j}:j=0,\dots ,k\},$ defined on $\mathbb{R}$, is said to be of \textbf{%
finite type on the torus $\mathbb{T}$} if there is a finite set $\Lambda
\subseteq \mathbb{R}$ such that for each positive integer $n$ and any two
sets of indices $\sigma ,\tau \in \mathcal{A}^{n}$, either 
\begin{equation*}
d_{\mathbb{T}}(\pi (S_{\sigma }(0)),\pi (S_{\tau }(0)))>\varrho ^{n}\delta 
\text{ or }d_{\mathbb{T}}(\pi (S_{\sigma }(0)),\pi (S_{\tau }(0)))\in \text{ 
}\varrho ^{n}\Lambda ,
\end{equation*}%
where $\delta =(1-\varrho )^{-1}(\max d_{j}-\min d_{j})$ is the diameter of
the self-similar set $K\subseteq $ $\mathbb{R}$. If the IFS is of finite
type on the torus and $\mu _{\pi }$ is the quotient measure of a
self-similar measure $\mu $ associated with the IFS, we also say that $\mu
_{\pi }$ is of finite type on\textbf{\ }$\mathbb{T}$.
\end{definition}

Later in this section we will show that quotients of the $k$-fold
convolutions of Bernoulli convolutions or Cantor measures with contraction
factors the inverse of a Pisot number are of finite type on $\mathbb{T}$. We
will also give an example of a measure which is of finite type on $\mathbb{R}
$, but whose quotient is not of finite type on $\mathbb{T}$.

A translation argument shows there is no loss of generality in assuming $%
d_{0}=0$. We will also suppose that the diameter of the self-similar set $K$
is an integer $\delta $. We will leave it to the reader to consider what
modifications to our arguments need to be made when the diameter is not an
integer.

First, we introduce the important notions of quotient net intervals,
neighbours and characteristic vectors. These are motivated by the analogous
ideas for measures of finite type on $\mathbb{R}$.

For each integer $n\geq 1$, let $0=$ $h_{1}<h_{2}<\dots <h_{s_{n}-1}<1$ be
the collection of elements $\{\pi (S_{\sigma }(0)),\pi (S_{\sigma }(\delta
)):\sigma \in \mathcal{A}^{n}\}\mathbb{\ }$in the torus which here it is
convenient to identify with $[0,1).$ Put $h_{s_{n}}=1$ and let 
\begin{equation*}
\mathcal{F}_{n}^{(\pi )}=\{[h_{j},h_{j+1}]:1\leq j\leq s_{n}-1,\pi
^{-1}(h_{j},h_{j+1})\bigcap K\neq \emptyset \}.
\end{equation*}%
Put $\mathcal{F}_{0}^{(\pi )}=\{[0,1]\}$. Elements in $\mathcal{F}_{n}^{(\pi
)}$ are called the \textit{quotient net intervals of level }$n$. In what
follows, we will often omit the adjective `quotient' if this is clear from
the context. We note that these are intervals in $\mathbb{R}$ that are
contained in $[0,1]$. Each quotient net interval $\Delta ,$ of level $n\geq
1,$ is contained in a unique quotient net interval $\widehat{\Delta }$ of
level $n-1,$ called its \textit{parent}.

There is a similar notion of net intervals for measures on $\mathbb{R}$. It
is worth observing that an interval in $\mathcal{F}_{n}^{(\pi )}$ may not
correspond directly to a net interval of level $n$ in $\mathbb{R}$ because $%
\pi (S_{\sigma }(\epsilon _{1}))$ and $\pi (S_{\tau }(\epsilon _{2}))$ may
be adjacent in $\mathbb{T}$ without being adjacent in $\mathbb{R}$.

Let $\Delta =[a,b]\in \mathcal{F}_{n}^{(\pi )}$, $n\geq 1$ and for $%
l=0,...,\delta -1$ let $\Delta ^{(l)}=\Delta +l$. As $\pi ^{-1}(a,b)\cap K$
is not empty there must be some $l$ such that $int(\Delta ^{(l)})\bigcap
K=(a+l,b+l)\bigcap K$ is not empty. Put 
\begin{equation*}
\{a_{1},\dots ,a_{m}\}=\{\varrho ^{-n}(a-S_{\sigma }(0)+l):\sigma \in 
\mathcal{A}^{n}\ ,l\in \mathbb{N}\text{, }int(\Delta ^{(l)})\bigcap
S_{\sigma }(K)\neq \emptyset \},
\end{equation*}%
where we assume the real numbers $\{a_{i}\}$ satisfy $a_{1}<a_{2}<\cdot
\cdot \cdot <a_{m}$. By the \textit{quotient neighbour set} of $\Delta $ we
mean the tuple 
\begin{equation*}
V_{n}^{(\pi )}(\Delta )=(a_{1},\dots ,a_{m}).
\end{equation*}%
The normalized length of $\Delta =[a,b]\in \mathcal{F}_{n}^{(\pi )}$ is
denoted%
\begin{equation*}
\ell _{n}(\Delta )=\varrho ^{-n}|b-a|.
\end{equation*}%
As $\left\vert S_{\sigma }(0)-S_{\sigma }(\delta )\right\vert =\rho
^{n}\delta $ when $\sigma \in \mathcal{A}^{n}$, it follows that for large
enough $n$, $|b-a|=d_{\mathbb{T}}(a,b)$.

Suppose $\widehat{\Delta }$ is the parent of $\Delta \in \mathcal{F}%
_{n}^{(\pi )}$ and $\Delta _{1},...,\Delta _{j}$ are the quotient net
intervals of level $n$ that are also children of $\widehat{\Delta }$, listed
from left to right, with the same normalized length and quotient neighbour
set as $\Delta $. Define $r_{n}(\Delta )$ to be the integer $r$ such that $%
\Delta =\Delta _{r}$. By the \textit{quotient characteristic vector} of $%
\Delta \in \mathcal{F}_{n}^{(\pi )}$ for $n\geq 1$ we mean the triple%
\begin{equation*}
\mathcal{C}_{n}^{(\pi )}(\Delta )=(\ell _{n}(\Delta ),V_{n}^{(\pi )}(\Delta
),r_{n}(\Delta )).
\end{equation*}%
We call $(\ell _{n}(\Delta ),V_{n}^{(\pi )}(\Delta ))$ the \textit{reduced
quotient characteristic vector} of $\Delta $. For $n=0$, we define 
\begin{equation*}
\mathcal{C}_{0}^{(\pi )}([0,1])=(1,(0,1,2,\dots ,\delta -1),1).
\end{equation*}%
The reason for this choice will be clear later.

We remark that this structure depends only on the similarities $%
\{S_{j}\}_{j=0}^{k}$ and not on the measure itself.

Here is a very important fact about measures of finite type on $\mathbb{T}$.
The same statement (with the appropriate definitions) is known to be true
for measures of finite type on $\mathbb{R}$ \cite{F3}.

\begin{lemma}
If the IFS is of finite type on $\mathbb{T}$, then there are only finitely
many quotient characteristic vectors.
\end{lemma}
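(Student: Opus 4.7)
The plan is to establish finiteness for each of the three components of $\mathcal{C}_n^{(\pi)}(\Delta) = (\ell_n(\Delta), V_n^{(\pi)}(\Delta), r_n(\Delta))$ separately as $n \geq 1$ and $\Delta \in \mathcal{F}_n^{(\pi)}$ vary, in parallel with the classical argument for finite type on $\mathbb{R}$.

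For the normalized length, write $\Delta = [a,b]$ and observe that $a,b$ are consecutive elements of $E_n := \{\pi(S_\sigma(0)), \pi(S_\sigma(\delta)) : \sigma \in \mathcal{A}^n\}$. Since $\delta \in \mathbb{Z}$ and $S_\sigma(\delta) = S_\sigma(0) + \varrho^n \delta$, I would enlarge $\Lambda$ to $\Lambda' := \Lambda \cup (\Lambda + \delta) \cup (\Lambda - \delta) \cup \{\pm \delta\}$ and apply Definition \ref{defn:finite type T} to the four pairings of endpoint types $(0,0),(0,\delta),(\delta,0),(\delta,\delta)$. This yields that $d_{\mathbb{T}}(a,b)$ is either greater than $\varrho^n \delta$ or lies in $\varrho^n \Lambda'$. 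A geometric argument using $\pi^{-1}(a,b) \cap K \neq \emptyset$ together with the fact that each $\pi(S_\sigma(K))$ is an arc of $\mathbb{T}$ of diameter at most $\varrho^n \delta$ forces $b - a \leq \varrho^n \delta$ for $n$ large enough that $\varrho^n \delta < 1/2$ (so that $b - a = d_{\mathbb{T}}(a,b)$). Hence $\ell_n(\Delta) \in \Lambda'$, a finite set.

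For the neighbour set $V_n^{(\pi)}(\Delta) = (a_1, \dots, a_m)$, the intersection condition $\mathrm{int}(\Delta + l) \cap S_\sigma(K) \neq \emptyset$ together with $S_\sigma(K) \subseteq [S_\sigma(0), S_\sigma(0) + \varrho^n \delta]$ confines each $a_i = \varrho^{-n}(a - S_\sigma(0) + l)$ to a bounded interval of length about $\delta$. Writing $a = \pi(S_\tau(e))$ for suitable $\tau \in \mathcal{A}^n$ and $e \in \{0,\delta\}$, the real number $a - S_\sigma(0) + l$ differs from $S_\tau(e) - S_\sigma(0)$ by an integer, so its reduction realizes a signed torus distance between $\pi(S_\tau(e))$ and $\pi(S_\sigma(0))$. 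The extended finite type hypothesis then restricts each $a_i$ to a finite set, and $m$ is bounded because distinct entries are separated by a positive constant drawn from that same set. Finally, $r_n(\Delta)$ is at most the number of children of $\widehat{\Delta}$, which is controlled by $\ell_{n-1}(\widehat{\Delta})/(\varrho \cdot c)$ with $c > 0$ the minimum normalized length; both quantities have already been shown to take only finitely many values.

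The main technical obstacle is the bookkeeping around the enlargement step: Definition \ref{defn:finite type T} is stated only for differences of images of the single point $0$, whereas the endpoints of quotient net intervals and the entries of the neighbour set involve all four mixed pairings with $\delta$, and the torus identification muddies the translation to real differences. Once one verifies that the enlarged set $\Lambda'$ is still finite and closed under the relevant integer translations, and establishes the identification $b - a = d_{\mathbb{T}}(a,b)$ for large $n$, the remainder of the proof is a direct transcription of Feng's argument in \cite{F3}.
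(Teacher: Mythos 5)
Your outline correctly splits the proof into showing finiteness of the three components, and you correctly flag that the mixed endpoint pairings with $\delta$ are the hard part --- but the device you propose does not actually dispose of them, and the concluding remark that the rest is ``a direct transcription of Feng's argument'' leaves the essential step unproved. Definition \ref{defn:finite type T} only constrains $d_{\mathbb{T}}(\pi(S_\sigma(0)),\pi(S_\tau(0)))$, and only when that quantity is at most $\varrho^n\delta$. Suppose $a=\pi(S_\sigma(\delta))$ and $b=\pi(S_\tau(0))$ with $d_{\mathbb{T}}(a,b)\leq\varrho^n\delta$; then since $S_\sigma(\delta)$ sits at torus-distance $\varrho^n\delta$ from $S_\sigma(0)$, the triangle inequality only gives $d_{\mathbb{T}}(\pi(S_\sigma(0)),\pi(S_\tau(0)))\leq 2\varrho^n\delta$, which can strictly exceed $\varrho^n\delta$. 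In that range the finite type hypothesis is silent, so you cannot conclude the $(0,0)$-distance lies in $\varrho^n\Lambda$, and hence cannot conclude anything about $d_{\mathbb{T}}(a,b)$. Adjoining $\pm\delta$ and $\Lambda\pm\delta$ to form $\Lambda'$ never touches this: the problem is not that the target set is too small but that the hypothesis gives you no information at the relevant scale.

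The paper closes this gap with an additional geometric input that your proposal does not use. Because $\pi^{-1}(a,b)\cap K\neq\emptyset$ and $(a,b)$ contains no level-$n$ net points, there is some $\alpha\in\mathcal{A}^n$ and integer $l$ with $[a+l,b+l]\subseteq[S_\alpha(0),S_\alpha(\delta)]$, a reference arc of length exactly $\varrho^n\delta$ containing \emph{both} endpoints. Consequently $d_{\mathbb{T}}(\pi(S_\sigma(\delta)),\pi(S_\alpha(\delta)))$ and $d_{\mathbb{T}}(\pi(S_\tau(0)),\pi(S_\alpha(0)))$ are each at most $\varrho^n\delta$, the first of which equals the pure $(0,0)$-distance $d_{\mathbb{T}}(\pi(S_\sigma(0)),\pi(S_\alpha(0)))$ --- and now the finite type hypothesis really does apply to both. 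The normalized length is then recovered as the signed three-term combination of these controlled distances together with $d_{\mathbb{T}}(\pi(S_\alpha(0)),\pi(S_\alpha(\delta)))=\varrho^n\delta$, putting it in the finite set $\Lambda\pm\Lambda\pm\Lambda$. The same reference-arc device is what makes the neighbour-set bound go through, for exactly the same reason. If you want to repair your outline, the lemma you are missing is precisely this: each endpoint of a quotient net interval of level $n$ lies inside a level-$n$ image arc of width $\varrho^n\delta$, which forces all the auxiliary distances into the regime where Definition \ref{defn:finite type T} bites.
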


\begin{proof}
First, we will check there are only finitely many normalized lengths of
quotient net intervals. It is certainly enough to verify this for net
intervals of level $n$ for large enough $n$, thus we can assume that if $%
\Delta =[a,b]\in \mathcal{F}_{n}^{(\pi )}$, then $\rho ^{n}\delta <1/2$.
Since $\left\vert S_{\sigma }(0)-S_{\sigma }(\delta )\right\vert \leq \rho
^{n}\delta $ when $\sigma \in \mathcal{A}^{n}$, it follows that $\left\vert
b-a\right\vert =d_{\mathbb{T}}(\pi (a),\pi (b))\leq \rho ^{n}\delta $.

Suppose $a=\pi (S_{\sigma }(0))$ and $b=\pi (S_{\tau }(0))$ for some $\sigma
,\tau \in \mathcal{A}^{n}$ (viewing $\mathbb{T}$ as $[0,1)$.) Then 
\begin{equation*}
\rho ^{-n}\left\vert b-a\right\vert =\rho ^{-n}d_{\mathbb{T}}(\pi (S_{\sigma
}(0)),\pi (S_{\tau }(0)))\leq \rho ^{-n}\delta
\end{equation*}%
and thus by definition $\rho ^{-n}\left\vert b-a\right\vert $ belongs to the
finite set $\Lambda $. The argument is similar if $a,b$ are both images of $%
\delta $.

Now suppose $a=\pi (S_{\sigma }(\delta ))$ and $b=\pi (S_{\tau }(0))$. Since 
$\pi ^{-1}(a,b)\cap K$ is not empty, the definition of a net interval
ensures there is some $\alpha \in \mathcal{A}^{n}$ such that $\pi
^{-1}[a,b]\subseteq \lbrack S_{\alpha }(0),S_{\alpha }(\delta )]$. Of
course, $\rho ^{-n}d_{\mathbb{T}}(\pi (S_{\alpha }(0)),\pi (S_{\alpha
}(\delta )))=\delta $. Since 
\begin{eqnarray*}
d_{\mathbb{T}}(\pi (S_{\sigma }(\delta )),\pi (S_{\tau }(0))) &=&d_{\mathbb{T%
}}(\pi (S_{\sigma }(\delta )),\pi (S_{\alpha }(\delta )))+d_{\mathbb{T}}(\pi
(S_{\tau }(0)),\pi (S_{\alpha }(0))) \\
&&-d_{\mathbb{T}}(\pi (S_{\alpha }(\delta )),\pi (S_{\alpha }(0))),
\end{eqnarray*}%
we see that $\rho ^{-n}\left\vert b-a\right\vert =\rho ^{-n}d_{\mathbb{T}%
}(\pi (S_{\sigma }(\delta )),\pi (S_{\tau }(0)))$ belongs to the finite set $%
\Lambda \pm \Lambda \pm \Lambda $ (where we assume, without loss of
generality, that $\delta \in \Lambda $).

Similar, but easier, arguments apply if $a=\pi (S_{\sigma }(0)),$ $b=\pi
(S_{\tau }(\delta ))$. Consequently, there are only finitely many normalized
lengths.

This fact guarantees that each quotient net interval has a bounded number of
children. In particular, there can only be finitely many choices for the 3rd
component of the characteristic vectors.

Lastly, we need to show there are only finitely many neighbours. So suppose $%
a_{i}=\rho ^{-n}(a-S_{\sigma }(0)$ $+l)$ where $\Delta =[a,b]$ and $(\Delta
+l)\cap S_{\sigma }(0,\delta )$ is not empty for $\sigma \in \mathcal{A}^{n}$
and integer $l$. This guarantees that $\left\vert S_{\sigma
}(0)-(a+l)\right\vert \leq \rho ^{n}\delta $ and therefore%
\begin{equation*}
\rho ^{-n}\left\vert S_{\sigma }(0)-(a+l)\right\vert =\rho ^{-n}d_{\mathbb{T}%
}(\pi (S_{\sigma }(0)),\pi (a+l))\in \Lambda \pm \Lambda \pm \Lambda ,
\end{equation*}%
completing the proof.
\end{proof}

\subsection{Examples and Counterexamples}

\label{sec:Pisot}We begin this subsection by exhibiting a family of examples
of quotient measures of finite type on the torus.

\begin{proposition}
\label{ft}Let $\beta $ be a Pisot number and $S_{j}(x)=\beta ^{-1}x+d_{j}$
for $d_{j}\in \mathbb{Q}[\beta ]$ and $j=0,1,2,...,k$. Assume the
self-similar set has convex hull the interval $[0,\delta ]$ with $\delta $
an integer. Then the quotient of any associated self-similar measure is of
finite type on the torus.
\end{proposition}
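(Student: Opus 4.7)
The plan is to reduce the torus finite type property to the classical arithmetic separation property of sums of powers of a Pisot number. Throughout, write $\varrho=\beta^{-1}$, $D=\{d_0,\dots,d_k\}\subset\mathbb{Q}[\beta]$, and choose $q\in\mathbb{N}$ so that $qD\subset\mathbb{Z}[\beta]$. Iterating the similarities gives
\[
\beta^n\bigl(S_\sigma(0)-S_\tau(0)\bigr)=\sum_{j=1}^n\beta^j e_j,\qquad e_j=d_{\sigma_{n-j+1}}-d_{\tau_{n-j+1}}\in D-D.
\]
Because the torus distance is $d_{\mathbb{T}}(\pi(x),\pi(y))=\min_{m\in\mathbb{Z}}|x-y-m|$, verifying Definition \ref{defn:finite type T} reduces to showing that the set of real numbers
\[
y_m:=\sum_{j=1}^n\beta^j e_j-\beta^n m
\]
with $|y_m|\le\delta$ is finite, uniformly in $n$. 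For $n$ bounded there are only finitely many pairs $(\sigma,\tau)$, so I may restrict to $n$ large enough that $\varrho^n\delta<1$.

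First I would observe that the relevant $m$'s form a fixed finite subset of $\mathbb{Z}$: since $|S_\sigma(0)-S_\tau(0)|\le\delta$ and any contributing $m$ satisfies $|S_\sigma(0)-S_\tau(0)-m|\le\varrho^n\delta<1$, necessarily $|m|\le\delta+1$. Therefore $qy_m\in\mathbb{Z}[\beta]$, and $|qy_m|\le q\delta$ uniformly in $n,\sigma,\tau$. Now comes the decisive use of the Pisot hypothesis: let $\beta^{(2)},\dots,\beta^{(d)}$ be the Galois conjugates of $\beta$, all with $|\beta^{(k)}|<1$. Applying each corresponding embedding of $\mathbb{Q}[\beta]$ to $qy_m$ yields
\[
(qy_m)^{(k)}=\sum_{j=1}^n(\beta^{(k)})^j(qe_j)^{(k)}-(\beta^{(k)})^n(qm).
\]
The elements $qe_j$ run over a fixed finite subset of $\mathbb{Z}[\beta]$, so their conjugates are bounded; since $\sum_{j\ge 1}|\beta^{(k)}|^j<\infty$ and $|qm|$ is bounded, I obtain a constant $C_k$, independent of $n,\sigma,\tau$, with $|(qy_m)^{(k)}|\le C_k$.

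Combining $|qy_m|\le q\delta$ with these conjugate bounds places $qy_m$ in a bounded region of the image of $\mathbb{Z}[\beta]$ under the product of all archimedean embeddings into $\mathbb{R}^{r_1}\times\mathbb{C}^{r_2}$. This image is a discrete additive subgroup, so only finitely many of its points lie in a bounded region. Hence $\{qy_m\}$, and thus $\{|y_m|\}$, takes only finitely many values; dividing by $q$ and collecting these absolute values yields the finite set $\Lambda$ demanded by Definition \ref{defn:finite type T}. The substantive obstacle is the Pisot-conjugate bound being uniform in $n$, which hinges on $|\beta^{(k)}|<1$ producing a convergent geometric series; the subsequent lattice-discreteness step is routine once $qy_m$ has been identified as an element of $\mathbb{Z}[\beta]$ with bounded conjugates.
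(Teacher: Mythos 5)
Your proof is correct and rests on the same key idea as the paper's: the Pisot hypothesis makes every Galois conjugate of the normalized (torus) distance uniformly bounded, because the geometric series $\sum_j |\beta^{(k)}|^j$ converges, while the normalized distance itself is bounded by $\delta$; a nonzero algebraic integer with all conjugates bounded can take only finitely many values. The difference is in packaging: the paper isolates this as the Garsia-type separation lemma (Lemma~\ref{Galois}), which uses the fact that the algebraic norm of a nonzero element of $\mathbb{Z}[\beta]$ is a nonzero integer to deduce that elements of $\Lambda^{S}(\beta)$ are uniformly separated, whereas you argue directly that the bounded conjugate vector places $qy_m$ in a compact region of the Minkowski lattice $\mathbb{Z}[\beta]\hookrightarrow\mathbb{R}^{r_1}\times\mathbb{C}^{r_2}$, which being discrete and full-rank contains only finitely many such points. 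These are equivalent statements of the same fact; the norm argument is perhaps more elementary while the lattice viewpoint is more conceptual. One small remark: your restriction to $n$ large enough that $\varrho^n\delta<1$ is unnecessary, since the torus distance is always at most $1/2$ and so the minimizing integer always satisfies $|m|\le\delta$; but this harmless overcaution does not affect correctness.
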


\begin{lemma}
\label{Galois}Let $S\subset \mathbb{Q}[\beta ]$ be a finite set and $\Lambda
^{S}(\beta )=\left\{ \sum_{i=0}^{n}a_{i}\beta ^{i}|a_{i}\in S,n\in \mathbb{N}%
\right\} $. Then there exists a constant $c>0$ such that if $y,z\in \Lambda
^{S}(\beta ),$ then either $y=z$ or $|y-z|>c$.
\end{lemma}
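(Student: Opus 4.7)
The plan is to apply the classical Galois-conjugate / norm argument for Pisot numbers. Let $\beta = \beta_{1},\beta_{2},\dots ,\beta_{d}$ denote the Galois conjugates of $\beta $ over $\mathbb{Q}$ and let $\sigma _{1}=\mathrm{id},\sigma _{2},\dots ,\sigma _{d}$ be the corresponding embeddings of $K=\mathbb{Q}(\beta )$ into $\mathbb{C}$. The key input is that $\beta $ is Pisot, so $|\beta _{j}|<1$ for $j\geq 2$.

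First I would clear denominators: since $S$ is a finite subset of $\mathbb{Q}[\beta ]$, there exists an integer $N\geq 1$ with $Ns\in \mathbb{Z}[\beta ]$ for every $s\in S$. Then for any $y=\sum_{i=0}^{n}a_{i}\beta ^{i}\in \Lambda ^{S}(\beta )$ one has $Ny=\sum _{i=0}^{n}(Na_{i})\beta ^{i}\in \mathbb{Z}[\beta ]$, and therefore $N(y-z)\in \mathbb{Z}[\beta ]\subseteq \mathcal{O}_{K}$, the ring of algebraic integers of $K$.

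Next I would bound the non-principal conjugates uniformly in the length $n$. Set
\[
M=\max_{s\in S,\; 2\leq j\leq d}|\sigma _{j}(s)|,
\]
which is finite because $S$ is finite. For $j\geq 2$ and any $w=\sum_{i=0}^{n}a_{i}\beta ^{i}\in \Lambda ^{S}(\beta )$,
\[
|\sigma _{j}(w)|\leq M\sum_{i=0}^{n}|\beta _{j}|^{i}\leq \frac{M}{1-|\beta _{j}|},
\]
so $|\sigma _{j}(N(y-z))|\leq B_{j}:=2NM/(1-|\beta _{j}|)$ uniformly in $y$ and $z$. If $y\neq z$, then $N(y-z)$ is a nonzero element of $\mathcal{O}_{K}$, so its field norm $\prod_{j=1}^{d}\sigma _{j}(N(y-z))$ is a nonzero rational integer and hence has absolute value at least $1$. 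Isolating the $j=1$ factor yields
\[
|y-z|\geq \frac{1}{N\prod_{j=2}^{d}B_{j}}=:c>0.
\]

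The only real subtlety is ensuring that the bound on $|\sigma _{j}(w)|$ is genuinely uniform in the length $n$ of the expansion; this is where the Pisot hypothesis is essential, since the geometric series $\sum |\beta _{j}|^{i}$ converges precisely because $|\beta _{j}|<1$. I do not anticipate further obstacles: this is the standard discreteness proof for finite-digit $\beta $-expansions, adapted to allow digits in an arbitrary finite subset of $\mathbb{Q}[\beta ]$ and positive rather than negative powers of $\beta $.
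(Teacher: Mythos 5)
Your proof is correct and follows essentially the same route as the paper: clear denominators to land in $\mathbb{Z}[\beta]\subseteq\mathcal{O}_{K}$, invoke that a nonzero algebraic integer has field norm of absolute value at least $1$, and bound the non-identity conjugates uniformly via the geometric series that converges because $\beta$ is Pisot. The only cosmetic difference is that you bound $|\sigma_{j}(s)|$ for $s\in S$ and then double, while the paper bounds $|\sigma_{j}(a-b)|$ over $a,b\in S$ directly.
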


\begin{proof}
This is essentially done in \cite{Garsia62}, but we include it here for
completeness.

We first observe that we can assume $S\subset \mathbb{Z}[\beta ]$. To see
this we multiply by the least common multiple of the denominators of the $%
a_{i}$. This scales the constant $c$, but does not alter its existence.

Thus we have that $\Lambda ^{S}(\beta )\in \mathbb{Z}[\beta ]$. Let $\sigma
_{1},\sigma _{2},\dots ,\sigma _{n}$ be the Galois automorphisms on $\mathbb{%
Q}[\beta ]$. Either $y=z$ or $y-z\neq 0$. In the latter case, we have $%
y-z\in \mathbb{Z}[\beta ]$, and hence the algebraic norm, $N(y-z)$, is a
non-zero integer. This implies that 
\begin{equation*}
|N(y-z)|=\left\vert \prod_{i}\sigma _{i}(y-z)\right\vert \geq 1
\end{equation*}%
and hence if $y=\sum a_{j}\beta ^{j}$ and $z=\sum a_{j}^{\prime }\beta ^{j}$%
, with $a_{j},a_{j}^{\prime }\in S$, then 
\begin{equation*}
|y-z|\geq \frac{1}{\left\vert \prod_{\sigma _{i}\neq \mathrm{id}}\sigma
_{i}(y-z)\right\vert }\geq \frac{1}{\prod_{\sigma _{i}\neq \mathrm{id}%
}\left\vert \sum_{j}\sigma _{i}(a_{j}-a_{j}^{\prime })\sigma _{i}(\beta
^{j})\right\vert }
\end{equation*}%
where $\mathrm{id}$ is the identity automorphism. Let $c_{\sigma
}=\max_{a,b\in S}|\sigma (a-b)|>0$. We have that 
\begin{equation*}
|y-z|\geq \frac{1}{\prod_{\sigma _{i}\neq \mathrm{id}}\sum_{j=0}^{\infty
}c_{\sigma_i}\left\vert \sigma_{i}(\beta )\right\vert ^{j}}.
\end{equation*}%
As $\beta $ is Pisot, $|\sigma _{i}(\beta )|<1$ for all Galois actions, and
hence the right hand side is bounded below, giving the result.
\end{proof}

\begin{proof}[Proof of Proposition \protect\ref{ft}]
Let 
\begin{equation*}
F=\left\{ \delta ,d_{j}-\ell :j=0,...,k;\ell =0,...,\delta \right\}
\subseteq \mathbb{Q}[\beta ]
\end{equation*}%
and consider the choices of $\sigma ,\tau $ $\in \mathcal{A}^{n}$ such that $%
\rho ^{-n}d_{\mathbb{T}}(\pi (S_{\sigma }(0)),\pi (S_{\tau }(0)))\leq \delta 
$. These normalized distances are equal to $\beta ^{n}\left\vert S_{\sigma
}(0)-S_{\tau }(0)+\ell \right\vert $ for suitable integers $\ell \in
\{-\delta ,...,\delta \}$ and hence $\beta ^{n}d_{\mathbb{T}}(\pi (S_{\sigma
}(0)),\pi (S_{\tau }(0)))\in \Lambda ^{F-F}(\beta )$. According to Lemma \ref%
{Galois}, the absolute values of the non-zero elements of $\Lambda
^{F-F}(\beta )$ are bounded away from zero, say $\geq \varepsilon $. Thus
there can be at most $(2\delta +1)/\varepsilon $ elements in $\Lambda
^{F-F}(\beta )\cap \lbrack -\delta ,\delta ]$. This proves the finite type
property.
\end{proof}

\begin{remark}
\label{Rconv}If $\mu _{\pi },\nu _{\pi }$ are two measures on $\mathbb{T}$,
then their convolution (on $\mathbb{T)}$ is defined by 
\begin{equation*}
\mu _{\pi }\ast \nu _{\pi }(E)=\int_{\mathbb{T}}\mu _{\pi }(E-x)\text{ }d\nu
_{\pi }(x)
\end{equation*}%
for all Borel sets $E\subseteq \mathbb{T}$. (Here the group operation is
understood on the torus.) If $\mu _{\pi }$ and $\nu _{\pi }$ are the
quotients of measures $\mu $ and $\nu $ on $\mathbb{R}$ respectively, then
the convolution $\mu _{\pi }\ast \nu _{\pi }$ is equal to the quotient of
the convolution (on $\mathbb{R}$) $\mu \ast \nu $. In other words, $(\mu
\ast \nu )_{\pi }=\mu _{\pi }\ast \nu _{\pi }$.
\end{remark}

From Proposition \ref{ft} and Remark \ref{Rconv} we immediately deduce the
following:

\begin{corollary}
Any $k$-fold convolution (taken on $\mathbb{T}$) of a Cantor measure or
Bernoulli convolution, where the contraction factor is the inverse of a
Pisot number, is of finite type on $\mathbb{T}$.
\end{corollary}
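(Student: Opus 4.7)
The plan is to combine Proposition \ref{ft} with Remark \ref{Rconv} and the observation, already made in the Example after Definition \ref{defn:finite type T}, that the $k$-fold convolution on $\mathbb{R}$ of a Bernoulli convolution or Cantor measure with contraction $\varrho = 1/\beta$ is itself a self-similar measure. Specifically, it is associated to the IFS $\{S_j(x) = \varrho x + j(1-\varrho) : j = 0,1,\dots,k\}$ with probabilities $p_j = \binom{k}{j} 2^{-k}$. Therefore, once we verify that this IFS satisfies the hypotheses of Proposition \ref{ft}, the result on $\mathbb{R}$ transfers to the torus via Remark \ref{Rconv}, which identifies the $k$-fold convolution on $\mathbb{T}$ with the quotient of the $k$-fold convolution on $\mathbb{R}$.

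Hence the first step is to check the three hypotheses of Proposition \ref{ft}. Since $\beta$ is Pisot by assumption, this is immediate for the contraction ratio. The translation parameters $d_j = j(1-\varrho) = j(\beta - 1)/\beta$ clearly lie in $\mathbb{Q}[\beta]$. For the diameter, note that $\max d_j - \min d_j = k(1-\varrho)$, so the formula $\delta = (1-\varrho)^{-1}(\max d_j - \min d_j)$ gives $\delta = k$, an integer, and the convex hull of the self-similar set is $[0,k]$ as required.

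With these verifications, Proposition \ref{ft} immediately yields that the quotient on $\mathbb{T}$ of any self-similar measure associated to this IFS is of finite type on $\mathbb{T}$. Applying this to the specific self-similar measure given by the probabilities $p_j = \binom{k}{j}2^{-k}$ and invoking Remark \ref{Rconv} to identify this quotient with the $k$-fold convolution on $\mathbb{T}$ completes the argument. There is no real obstacle here; the work has all been done in Proposition \ref{ft} and the surrounding discussion, and the corollary is essentially a matter of checking that the $k$-fold convolution fits into the self-similar framework with the correct algebraic and integrality properties.
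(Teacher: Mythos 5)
Your argument is correct and fills in exactly the deduction the paper describes as ``immediate'' from Proposition \ref{ft} and Remark \ref{Rconv}: you identify the $k$-fold convolution on $\mathbb{R}$ as the self-similar measure for the IFS $\{S_j(x)=\varrho x + j(1-\varrho)\}$, check that $d_j\in\mathbb{Q}[\beta]$ and that $\delta=k$ is an integer so Proposition \ref{ft} applies, and then invoke Remark \ref{Rconv} to identify the quotient with the convolution on $\mathbb{T}$. This matches the paper's intended route.
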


\label{sec:finite type}

However, there are also measures of finite type on $\mathbb{R}$ whose
quotients are not of finite type on $\mathbb{T}$. Here is an example.

\begin{example}
\label{NotFT} Let $\varrho $ be a solution to 
\begin{equation*}
2(1-\varrho )\sum_{i=1}^{\infty }\varrho ^{i^{2}}=1/2
\end{equation*}%
(approximately $0.384$) and consider the IFS with similarities $%
S_{0}(x)=\varrho x$ and $S_{1}(x)=\varrho x+2(1-\varrho )$. The convex hull
of the self-similar set is $[0,2]$. Let $\mu $ be the associated
self-similar measure. As $\varrho <1/2$, the IFS satisfies the OSC and even
the strong separation condition, and thus $\mu $ is of finite type on $%
\mathbb{R}$. Let $\sigma ^{(n)}=(\sigma _{0},\sigma _{1},...,\sigma
_{n^{2}}) $ and $\tau ^{(n)}=(\tau _{0},\tau _{1},...,\tau _{n^{2}})$ $\in 
\mathcal{A}^{n^{2}+1}$ be defined by $\sigma _{i^{2}}=1$ if $i\neq 0$, $%
\sigma _{i}=0$ otherwise, and $\tau _{i}=1-\sigma _{i}$. The points $%
a_{n}=S_{\sigma^{(n)} }(0)$ and $b_{n}=S_{\tau^{(n)} }(0)$ are easily seen
to be symmetric about $1$ with $a_{n}<1/2$ and $b_{n}>3/2$. Consequently, 
\begin{equation*}
d_{\mathbb{T}}(\pi (a_{n}),\pi (b_{n}))=2\left\vert a-1/2\right\vert
=4(1-\varrho )\sum_{i=n+1}^{\infty }\varrho ^{i^{2}}\approx 4(1-\varrho
)\varrho ^{(n+1)^{2}}.
\end{equation*}%
Hence there exists a net interval in $\mathcal{F}_{n^{2}+1}^{(\pi )}$ with
normalized length at most 
\begin{equation*}
\approx 4\varrho ^{-(n^{2}+1)}\varrho ^{(n+1)^{2}}(1-\varrho )=4(1-\varrho
)\varrho ^{2n}.
\end{equation*}%
These normalized lengths are not bounded below and hence there cannot be a
finite number of characteristic vectors. Therefore $\mu _{\pi }$ is not of
finite type.
\end{example}

\subsection{Symbolic representations and the essential class(es)}

\label{sec:multi}

By an \textit{admissible path} we mean a finite tuple $\eta =(\gamma _{j})$
where each $\gamma _{j}$ is the quotient characteristic vector of $\Delta
_{j}$ and $\Delta _{j}$ is the parent of $\Delta _{j+1}$. Each $\Delta \in 
\mathcal{F}_{n}^{(\pi )}$ can be identified with a unique admissible path $%
\eta =\eta (\Delta )=(\gamma _{j})_{j=0}^{n}$ where $\gamma _{0}=\mathcal{C}%
_{0}[0,1]$ and $\gamma _{n}=\mathcal{C}_{n}(\Delta )$. We call this the 
\textit{symbolic representation} of $\Delta $.

We will often write $\Delta _{n}(x)$ for a net interval of level $n$
containing $x$. By the \textit{quotient} \textit{symbolic representation} of 
$x\in K$ we mean the sequence 
\begin{equation*}
\lbrack x]=(\mathcal{C}_{0}(\Delta _{0}(x)),\dots ,\mathcal{C}_{n}(\Delta
_{n}(x)),...)
\end{equation*}%
where $\Delta _{n}(x)\subseteq \Delta _{n-1}(x)$. If $x$ is an endpoint of $%
\Delta _{n}(x)$ for some $n,$ then $x$ is called a \textit{boundary point}
and it can have two symbolic representations. Otherwise, the symbolic
representation of $x$ is unique.

We can also define the notion of quotient loop classes and essential classes
in the same manner as was done for measures of finite type on $\mathbb{R}$.
A non-empty subset $\Omega ^{\prime }$ of quotient characteristic vectors
will be called a\textbf{\ }\textit{quotient loop class} if whenever $\alpha
,\beta \in \Omega ^{\prime }$, then there are quotient characteristic
vectors $\gamma _{j}$, $j=1,\dots ,n$, such that $\gamma _{1}=\alpha $, $%
\gamma _{n}=\beta $ and $(\gamma _{1},\dots ,\gamma _{n})$ is an admissible
path with all $\gamma _{j}\in \Omega ^{\prime }$. A loop class $\Omega
^{\prime }$ is called a \textit{quotient essential class} if, in addition,
whenever $\alpha \in \Omega ^{\prime }$ and $\beta \in \Omega $ is a child
of $\alpha $, then $\beta \in \Omega ^{\prime }$. We say that an element $x$
with symbolic representation $(\gamma _{0},\gamma _{1},.,,,)$ is \textit{in
a loop class} (such as \textit{in the essential class}) if there is some $J$
such that $\gamma _{j}\in $ loop class for all $j\geq J$.

The finite type property ensures that every element in $\pi (K)$ is
contained in a quotient loop class if the associated IFS is of finite type
on $\mathbb{T}$.

Feng, in \cite[Lemma 6.4]{F2}, proved that if $\mu $ is of finite type on $%
\mathbb{R}$, then there is exactly one essential class. Surprisingly, this
is not true for self-similar measures of finite type on the torus, as the
example below demonstrates.

\begin{example}
\label{EssnotUnique}Consider the IFS with maps $S_{j}(x)=x/4+d_{j}/5$ for $%
j=0,...,4$, $d_{j}=3j$ for $j=0,...,3$ and $d_{4}=15$ and any probabilities $%
p_{j}$. According to Proposition \ref{ft} this IFS is of finite type on $%
\mathbb{T}$. The convex hull of $K$ is $[0,4]$. Using the computer, we
determined that any corresponding quotient measure has 10 reduced quotient
characteristic vectors. From the reduced transition diagram, Figure \ref%
{fig:2EC}, one can see that there are two different essential classes, the
first from the reduced characteristic vector labelled $7$ and the second
consisting of the three reduced characteristic vectors $5,9,10$.

\begin{figure}[tbp]
\includegraphics[scale=0.5]{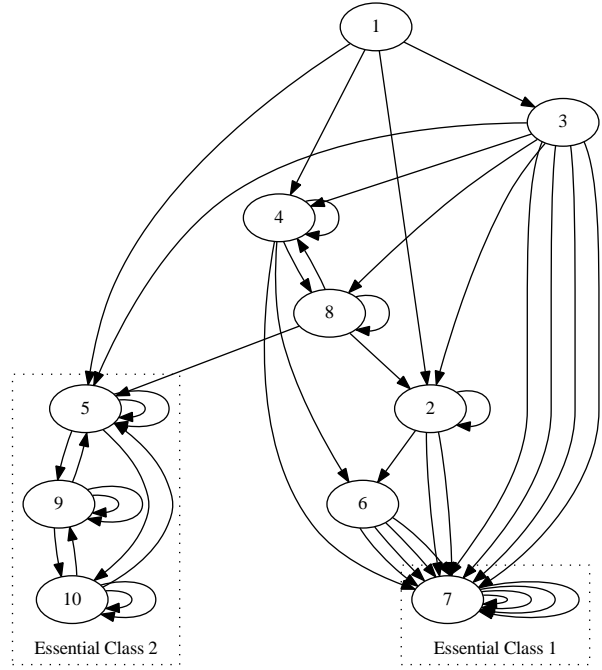}
\caption{Transition diagram for Example \protect\ref{EssnotUnique}}
\label{fig:2EC}
\end{figure}
\end{example}

\begin{remark}
In \cite[Proposition 3.6]{HHN} it was shown that, for self-similar measures
of finite type on $\mathbb{R}$, the set of points in the essential class has
full $\mu $-measure, and full $H^{s}$-measure. Here $H^{s}$ is the Hausdorff
measure associated to the support of $\mu $. Despite not necessarily having
a unique essential class, a similar proof will show that for $\mu _{\pi }$ a
self-similar measure of finite type on the torus, the set of points in the
union of all essential classes of $\mu _{\pi }$ will have full $\mu _{\pi }$%
-measure and full $H^{s}$-measure where $H_{s}$ is the Hausdorf measure
associated with the support of $\mu _{\pi }$ on the torus.
\end{remark}

\section{Computing local dimensions}

\label{sec:comp}

The\textit{\ upper }and\textit{\ lower local dimensions} at a point $x$ in
the support of a probability measure $\mu $ defined on a metric space are
defined as%
\begin{eqnarray*}
\overline{\dim }_{loc}\mu (x) &=&\limsup_{r\rightarrow 0}\frac{\log \mu
(B(x,r))}{\log r} \\
\underline{\dim }_{loc}\mu (x) &=&\liminf_{r\rightarrow 0}\frac{\log \mu
(B(x,r))}{\log r}.
\end{eqnarray*}%
When the two coincide the number is known as the \textit{local dimension} of 
$\mu $ at $x$.

As was the case for measures of finite type on $\mathbb{R}$, there is an
iterative strategy for computing local dimensions for measures of finite
type on $\mathbb{T}$, based upon suitable transition matrices.

\begin{notation}
Suppose $\Delta =[a,b]\in \mathcal{F}_{n}^{(\pi )}$ has parent $\widehat{%
\Delta }=[c,d]\in \mathcal{F}_{n-1}^{(\pi )}$ and assume their quotient
neighbour sets are $V_{n}^{(\pi )}(\Delta )=(a_{1},\dots ,a_{J})$ and $%
V_{n-1}^{(\pi )}(\widehat{\Delta })=(c_{1},\dots ,c_{I})$, respectively. The 
\textbf{quotient} \textbf{primitive transition matrix,} 
\begin{equation*}
T(\mathcal{C}_{n-1}(\widehat{\Delta }),\mathcal{C}_{n}(\Delta ))=(T_{ji})
\end{equation*}%
is the $J\times I$ matrix defined by the rule that $T_{ji}=p_{s}$ if there
is some $\sigma \in \mathcal{A}^{n-1}$ and integer $l$ with $S_{\sigma
}(0)=c-\varrho ^{n-1}c_{j}+l$ and $S_{\sigma s}(0)=a-\varrho ^{n}a_{i}+l$.
We set $T_{ji}=0$ otherwise.
\end{notation}

Given the net interval $\Delta $ with symbolic representation $\eta =(\gamma
_{j})_{j=0}^{n}$ , we write 
\begin{equation*}
T(\eta )=T(\gamma _{0},\gamma _{1})\cdot \cdot \cdot T(\gamma _{n-1},\gamma
_{n}).
\end{equation*}%
Any such product of primitive transition matrices will be called a \textit{%
quotient transition matrix. }By an essential primitive transition matrix, we
mean a transition matrix $T(\gamma _{j-1},\gamma _{j})$ where $\gamma
_{j-1},\gamma _{j}$ belongs to the essential class.

The definition ensures that each column of a primitive transition matrix
contains a non-zero entry. If $\pi (K)=\mathbb{T}$, then the same statement
holds for each row.

We note that as $S_{\sigma s}(0)=\varrho ^{n-1}d_{s}+S_{\sigma }(0)$, if $%
S_{\sigma }(0)=c-\varrho ^{n-1}c_{j}+l$ and $S_{\sigma s}(0)=a-\varrho
^{n}a_{i}+l$ for integer $l$, then 
\begin{equation*}
a-\varrho ^{n}a_{i}+l=\varrho ^{n-1}d_{s}+c-\varrho ^{n-1}c_{j}+l,
\end{equation*}%
so $s$ is uniquely determined (even if $\sigma $ and $l$ are not).

\begin{proposition}
Let $\mu $ be a self-similar measure satisfying (\ref{ssmeas}). Assume the
convex hull of the self-similar set is $[0,\delta ]$ and that the quotient
measure $\mu _{\pi }$ is of finite type on $\mathbb{T}$. Let $\Delta =[a,b]$
be a quotient net interval in $\mathcal{F}_{n}^{(\pi )}$ with reduced
quotient characteristic vector $(\ell _{n}(\Delta ),$ $(a_{1},...,a_{m}))$.
Then 
\begin{equation*}
{\mu _{\pi }}(\Delta )=\sum_{i=1}^{m}{\mu }\left( [a_{i},a_{i}+\ell
_{n}(\Delta )]\right) \sum_{l=0}^{\delta -1}\sum_{\substack{ \sigma \in 
\mathcal{A}_{n}  \\ a_{i}=(S_{\sigma }(0)-a-l)\varrho ^{-n}}}p_{\sigma }.
\end{equation*}
\end{proposition}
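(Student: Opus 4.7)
The plan is to unwrap $\mu_\pi(\Delta) = \mu(\pi^{-1}(\Delta))$ back to $\mathbb{R}$, apply the $n$-fold iterate of the self-similarity relation (\ref{ssmeas}), and regroup the resulting double sum by the element of the quotient neighbour set it produces. Since $K \subseteq [0,\delta]$ and $\Delta = [a,b]$ has length at most $1$, the set $\pi^{-1}(\Delta) \cap [0,\delta]$ is the union of the $\delta$ translates $\Delta + l$ for $l = 0, 1, \dots, \delta - 1$, and these are pairwise disjoint except at finitely many endpoints that carry no $\mu$-mass. Hence
$$\mu_\pi(\Delta) = \sum_{l=0}^{\delta-1} \mu(\Delta + l).$$

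Iterating (\ref{ssmeas}) $n$ times gives the identity $\mu = \sum_{\sigma \in \mathcal{A}^n} p_\sigma\, \mu \circ S_\sigma^{-1}$, where $p_\sigma := p_{\sigma_1} \cdots p_{\sigma_n}$. Because $S_\sigma(x) = \varrho^n x + S_\sigma(0)$, a direct computation gives $S_\sigma^{-1}(\Delta + l) = [a_\sigma^l,\, a_\sigma^l + \ell_n(\Delta)]$ with $a_\sigma^l := \varrho^{-n}(a + l - S_\sigma(0))$, so
$$\mu_\pi(\Delta) = \sum_{l=0}^{\delta-1} \sum_{\sigma \in \mathcal{A}^n} p_\sigma\, \mu\bigl([a_\sigma^l,\, a_\sigma^l + \ell_n(\Delta)]\bigr).$$

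The heart of the argument is reorganizing this double sum by the value of $a_\sigma^l$. By the definition of the quotient neighbour set, the pairs $(\sigma, l)$ with $\mathrm{int}(\Delta + l) \cap S_\sigma(K) \neq \emptyset$ are exactly those with $a_\sigma^l \in \{a_1, \dots, a_m\}$, and for such a pair $[a_\sigma^l,\, a_\sigma^l + \ell_n(\Delta)] = [a_i,\, a_i + \ell_n(\Delta)]$ when $a_i = a_\sigma^l$. Any other pair contributes zero, since its preimage $S_\sigma^{-1}(\Delta + l)$ meets $K$ in at most its two endpoints, and self-similar measures arising from a non-degenerate IFS are non-atomic. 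Pulling the common factor $\mu([a_i, a_i + \ell_n(\Delta)])$ out of the inner sum produces the stated identity.

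The only mildly delicate ingredient is the non-atomicity used to discard those $(\sigma, l)$ whose image meets $\Delta + l$ only at a boundary point; this is a standard, routine property of self-similar measures in this setting. The genuine content of the proposition is the identification of every non-vanishing preimage $S_\sigma^{-1}(\Delta + l)$ with an interval of the form $[a_i,\, a_i + \ell_n(\Delta)]$ indexed by a quotient neighbour $a_i$ of $\Delta$, after which the formula reduces to bookkeeping.
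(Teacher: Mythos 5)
Your proof is correct and takes essentially the same route as the paper's: unwrap $\pi^{-1}(\Delta)\cap[0,\delta]$ into the translates $\Delta+l$, iterate (\ref{ssmeas}) $n$ times, use non-atomicity to restrict to $(\sigma,l)$ whose image meets $\mathrm{int}(\Delta+l)$, identify each such preimage $S_\sigma^{-1}(\Delta+l)$ with $[a_i,a_i+\ell_n(\Delta)]$ via the neighbour set, and regroup. (One small note: you use $a_\sigma^l=\varrho^{-n}(a+l-S_\sigma(0))$, which matches the paper's definition of the neighbour set; the sign appearing in the proposition's displayed subscript condition, $a_i=(S_\sigma(0)-a-l)\varrho^{-n}$, is a typo in the paper that you implicitly corrected.)
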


\begin{proof}
Assume $\pi ^{-1}(\Delta )\cap \lbrack 0,\delta ]=\bigcup_{l=0}^{\delta
-1}\Delta ^{(l)}$ where $\Delta ^{(l)}=\Delta +l=[a+l,b+l]$. Then 
\begin{equation*}
{\mu _{\pi }}(\Delta )=\mu (\pi ^{-1}(\Delta ))=\sum_{l=0}^{\delta -1}\mu
(\Delta ^{(l)})=\sum_{l}\sum_{\sigma \in \mathcal{A}_{n}}p_{\sigma }\mu
(S_{\sigma }^{-1}(\Delta ^{(l)})).
\end{equation*}%
As $\mu $ is non-atomic this equals 
\begin{equation*}
{\mu _{\pi }}(\Delta )=\sum_{l=0}^{\delta -1}\sum_{\substack{ \sigma \in 
\mathcal{A}_{n}  \\ S_{\sigma }(K)\cap int\Delta ^{(l)}\neq \emptyset }}%
p_{\sigma }\mu (S_{\sigma }^{-1}(\Delta ^{(l)})).
\end{equation*}%
Note that if $S_{\sigma }(K)\cap int\Delta ^{(l)}\neq \emptyset ,$ then by
definition $(S_{\sigma }(0)-a-l)\varrho ^{-n}$ $=a_{i}$ belongs to the
quotient neighbour set of $\Delta $. Hence 
\begin{equation*}
{\mu _{\pi }}(\Delta )=\sum_{l=0}^{\delta -1}\sum_{i=1}^{m}\sum_{\substack{ %
\sigma \in \mathcal{A}_{n}  \\ a_{i}=(S_{\sigma }(0)-a-l)\varrho ^{-n}}}%
p_{\sigma }\mu (S_{\sigma }^{-1}(\Delta ^{(l)})).
\end{equation*}%
As $S_{\sigma }(x)=a-a_{i}\varrho ^{n}+l+\varrho ^{n}x,$ we see that $%
S_{\sigma }([a_{i},a_{i}+\ell (\Delta )])=[a+l,b+l]$. Hence 
\begin{align*}
{\mu _{\pi }}(\Delta )& =\sum_{l=0}^{\delta -1}\sum_{i=1}^{m}\sum_{\substack{
\sigma \in \mathcal{A}_{n}  \\ a_{i}=(S_{\sigma }(0)-a-l)\varrho ^{-n}}}%
p_{\sigma }\mu ([a_{i},a_{i}+\ell (\Delta )]) \\
& =\sum_{i=1}^{m}{\mu }([a_{i},a_{i}+\ell _{n}(\Delta )])\sum_{l=0}^{\delta
-1}\sum_{\substack{ \sigma \in \mathcal{A}_{n}  \\ a_{i}=(S_{\sigma
}(0)-a-l)\varrho ^{-n}}}p_{\sigma }
\end{align*}%
as claimed.
\end{proof}

\begin{notation}
For $n\geq 1$, let 
\begin{equation*}
P_{n}^{(i)}=P_{n}^{(i)}(\Delta )=\sum_{l=0}^{\delta -1}\sum_{\substack{ %
\sigma \in \mathcal{A}_{n}  \\ a_{i}=(S_{\sigma }(0)-a-l)\varrho ^{-n}}}%
p_{\sigma },
\end{equation*}%
\begin{equation*}
Q_{n}(\Delta )=(P_{n}^{(1)},...,P_{n}^{(m)}),
\end{equation*}%
and 
\begin{equation*}
Q_{0}([0,1])=(1,...,1)\in \mathbb{R}^{\delta }.
\end{equation*}
\end{notation}

We remark that as $[a_{i},a_{i}+\ell _{n}(\Delta )]=S_{\sigma
}^{-1}([a+l,b+l])$ and $K\bigcap S_{\sigma }^{-1}(a+l,b+l)$ is not empty, we
have ${\mu }([a_{i},a_{i}+\ell _{n}(\Delta )])>0$. Consequently, 
\begin{equation*}
{\mu _{\pi }}(\Delta )\sim \sum_{i}P_{n}^{(i)}(\Delta )=\left\Vert
Q_{n}(\Delta )\right\Vert ,
\end{equation*}%
with the constants of comparability independent of $\Delta $ or $n$. (Here
the norm of the vector is the sum of the absolute values of the entries.)

\begin{proposition}
For all $n\geq 1$, we have%
\begin{equation*}
Q_{n}(\Delta )=Q_{n-1}(\widehat{\Delta })T(\mathcal{C}_{n-1}(\widehat{\Delta 
}),\mathcal{C}_{n}(\Delta )).
\end{equation*}
\end{proposition}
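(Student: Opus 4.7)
The plan is to prove the matrix identity entry by entry: for each $i$, I aim to show
$P_n^{(i)}(\Delta) = \sum_j P_{n-1}^{(j)}(\widehat{\Delta}) \, T_{ji}$.
The strategy is to split each word $\sigma \in \mathcal{A}^n$ uniquely as $\sigma = \sigma' s$ with $\sigma' \in \mathcal{A}^{n-1}$ and $s \in \mathcal{A}$, so that $p_\sigma = p_{\sigma'} p_s$ and $S_\sigma(0) = \varrho^{n-1} d_s + S_{\sigma'}(0)$, and then regroup the sum defining $P_n^{(i)}$ by the pair $(j, s)$ rather than by $\sigma$.

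The first step I would carry out, which is also the crux of the argument, is to observe that the intersection requirement $S_\sigma(K) \cap \mathrm{int}(\Delta^{(l)}) \neq \emptyset$ implicit in the definition of $P_n^{(i)}$ depends only on $a_i$ and $\Delta$, not on the particular pair $(\sigma, l)$. Since $S_\sigma(K) = S_\sigma(0) + \varrho^n K$ is a translate of the fixed compact set $\varrho^n K$ and the arithmetic condition pins down the displacement $S_\sigma(0) - (a + l)$ as a function of $a_i$ alone, the relative position of $S_\sigma(K)$ inside $\Delta^{(l)}$ is the same for every such pair. Consequently, the existence of a single witnessing pair (which is guaranteed by $a_i \in V_n^{(\pi)}(\Delta)$) forces the intersection condition to hold for all of them, and $P_n^{(i)}(\Delta)$ simplifies to the sum of $p_\sigma$ over all $(\sigma, l)$ satisfying only the arithmetic constraint.

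Next, I substitute $\sigma = \sigma' s$ and rewrite the arithmetic constraint on $S_\sigma(0)$ as a constraint on $S_{\sigma'}(0)$ depending on $a_i$, $l$, and $d_s$. Using $\Delta \subseteq \widehat{\Delta}$ (so $\mathrm{int}(\Delta^{(l)}) \subseteq \mathrm{int}(\widehat{\Delta}^{(l)})$) together with $S_{\sigma's}(K) \subseteq S_{\sigma'}(K)$, a contributing $\sigma$ forces $S_{\sigma'}(K) \cap \mathrm{int}(\widehat{\Delta}^{(l)}) \neq \emptyset$, so $\sigma'$ contributes to $P_{n-1}^{(j)}(\widehat{\Delta})$ for a uniquely determined $j \in \{1,\dots,I\}$ with the same offset $l$. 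Equating the two expressions for $S_{\sigma'}(0)$ solves for $d_s$ as an explicit function of $a$, $c$, $a_i$, $c_j$, which is precisely the condition under which $T_{ji} = p_s$. Conversely, by the translation-invariance observation from the first step, any $\sigma'$ contributing to $P_{n-1}^{(j)}(\widehat{\Delta})$ together with the unique $s$ with $T_{ji} = p_s$ yields $\sigma = \sigma' s$ contributing to $P_n^{(i)}(\Delta)$. Collecting $p_{\sigma'} p_s$ over $(j, s)$, factoring $p_s = T_{ji}$ out of the inner sum, and recognizing the remaining sum as $P_{n-1}^{(j)}(\widehat{\Delta})$ completes the identity.

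The main obstacle is the translation-invariance reduction at the start: without it, one would have to verify both the arithmetic and the geometric conditions independently for each $\sigma$, and the correspondence between level-$n$ pairs and level-$(n{-}1)$ triples would fail to be one-to-one in the required fashion. Once that reduction is in place, the bijection between contributing $(l, \sigma)$ at level $n$ and contributing $(l, \sigma', s)$ at level $n-1$ matched with the correct matrix entry $T_{ji}$ is a routine bookkeeping argument.
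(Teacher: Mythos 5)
Your approach mirrors the paper's: split each $\sigma \in \mathcal{A}^n$ as $\sigma = \sigma' s$ with $\sigma' \in \mathcal{A}^{n-1}$ and $s \in \mathcal{A}$, match the arithmetic constraints at levels $n$ and $n-1$, and identify the surviving factor $p_s$ with the entry $T_{ji}$, running the correspondence in both directions. The translation-invariance step you isolate --- that whether $S_\sigma(K) \cap \mathrm{int}(\Delta^{(l)}) \neq \emptyset$ holds is determined by $a_i$ and $\Delta$ alone, independent of the particular pair $(\sigma, l)$ realizing that offset --- is exactly what underlies the paper's terse phrase ``by construction there is some index $j$ such that $S_{\widehat{\sigma}}(0) = c + l - \varrho^{n-1} c_j$,'' together with the inclusions $\mathrm{int}(\Delta^{(l)}) \subseteq \mathrm{int}(\widehat{\Delta}^{(l)})$ and $S_{\sigma' s}(K) \subseteq S_{\sigma'}(K)$. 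Spelling this out is a genuine improvement in clarity, not a detour.

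The one gap is the base case. You assert the identity ``for all $n \geq 1$,'' but your argument compares the defining formula for $P_n^{(i)}$ with that for $P_{n-1}^{(j)}$, and the formula for $P_n^{(i)}$ is stated only for $n \geq 1$; the vector $Q_0([0,1]) = (1, \dots, 1) \in \mathbb{R}^{\delta}$ is set by convention, not produced by the formula at level $0$. As written, your reduction therefore establishes the identity only for $n \geq 2$, and $n=1$ requires a separate check --- exactly as the paper carries out: one verifies $P_1^{(i)} = \sum_j t_{ji}$ directly, using the convention that the neighbour set of $[0,1]$ is $(0,1,\dots,\delta-1)$, so that the defining condition for $t_{ji}$ forces $l = c_j$. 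This is short, but it must be done; otherwise the induction has no anchor.
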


\begin{proof}
First, assume $n\geq 2$. We want to show that for each $i$,%
\begin{equation}
P_{n}^{(i)}=\sum_{j}P_{n-1}^{(j)}t_{ji}  \label{TransitionMatrix}
\end{equation}%
where $T(\mathcal{C}_{n-1}(\widehat{\Delta }),\mathcal{C}_{n}(\Delta
))=(t_{ji})$

Consider a typical summand in the formula for $P_{n}^{(i)}$, say $p_{\sigma
} $ where $\sigma \in \mathcal{A}^{n}$ and $\varrho ^{-n}(a-S_{\sigma
}(0)+l)=a_{i}$ for suitable integer $l$. Put $\sigma =\widehat{\sigma }s$
where $\widehat{\sigma }\in \mathcal{A}^{n-1}$ and $s\in \mathcal{A}$. As $%
S_{\widehat{\sigma }s}(0)=a+l-$ $\varrho ^{n}a_{i}$, by construction there
is some index $j$ such that $S_{\widehat{\sigma }}(0)=c+l-$ $\varrho
^{n-1}c_{j}$. But then $p_{\sigma }=p_{\widehat{\sigma }}p_{s}=p_{\widehat{%
\sigma }}t_{ji}$ and $p_{\widehat{\sigma }}$ is a summand of $P_{n-1}^{(j)}$.

On the other hand, assume $t_{ji}\neq 0$ and $p_{\widehat{\sigma }}$ is a
summand of $P_{n-1}^{(j)}$. Then there must be some $l$ such that $S_{%
\widehat{\sigma }}(0)=c+l-$ $\varrho ^{n-1}c_{j}$. Furthermore, there is
some integer $k$ and $\tau \in \mathcal{A}^{n}$ such that $S_{\tau }(0)=a+k-$
$\varrho ^{n}a_{i}$ and $S_{\widehat{\tau }}(0)=c+k-$ $\varrho ^{n-1}c_{j}$.
Assume $\tau =\widehat{\tau }s$. Then $S_{\widehat{\tau }s}(0)=$ $S_{%
\widehat{\tau }}(0)+\varrho ^{n-1}d_{s}=S_{\widehat{\sigma }}(0)+k-l+\varrho
^{n-1}d_{s}=S_{\widehat{\sigma }s}(0)+k-l,$ hence $S_{\widehat{\sigma }%
s}(0)=a+l-\varrho ^{n}a_{i}$. This observation shows that $p_{\sigma }=p_{%
\widehat{\sigma }}p_{s}$ is a summand of $P_{n}^{(i)}$ and $t_{ji}=p_{s}$.
Together, these observations prove (\ref{TransitionMatrix}), as required.

Now suppose $n=1$. In this case, we need to show that $P_{1}^{(i)}=%
\sum_{j}t_{ji}$ where $(t_{ji})=T(\mathcal{C}_{0}[0,1]),\mathcal{C}%
_{1}[a,b]) $. By definition $t_{ji}=p_{s}$ if $S_{s}(0)=a+l-\varrho a_{i}$
and $0=l-c_{j}$. Thus the definition of the neighbour set of $[0,1]$ as $%
\{0,1,...,\delta -1\}$ ensures we have $P_{1}^{(i)}=\sum_{j}t_{ji}$ for each 
$i$.
\end{proof}

By the matrix norm of matrix $M=(M_{jk})$ we mean $\left\Vert M\right\Vert
=\sum_{jk}\left\vert M_{jk}\right\vert $. In terms of this notation the
previous results combine to yield

\begin{corollary}
\label{measurenetint}There are positive constants $c_{1},c_{2}$ such that 
\begin{equation*}
c_{1}\left\Vert T(\eta )\right\Vert \leq {\mu _{\pi }}(\Delta )\leq
c_{2}\left\Vert T(\eta )\right\Vert
\end{equation*}%
whenever $\Delta \in \mathcal{F}_{n}^{(\pi )}$ has symbolic representation $%
\eta $.
\end{corollary}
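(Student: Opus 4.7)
My plan is to combine three facts already established in this section: the formula $\mu_{\pi}(\Delta)=\sum_{i}\mu([a_{i},a_{i}+\ell_{n}(\Delta)])P_{n}^{(i)}(\Delta)$, the recursion $Q_{n}(\Delta)=Q_{n-1}(\widehat{\Delta})T(\mathcal{C}_{n-1}(\widehat{\Delta}),\mathcal{C}_{n}(\Delta))$, and the initial value $Q_{0}([0,1])=(1,\ldots,1)\in\mathbb{R}^{\delta}$. The statement will reduce to showing two things: first, that $\|Q_{n}(\Delta)\|=\|T(\eta)\|$ exactly, and second, that $\mu_{\pi}(\Delta)\sim\|Q_{n}(\Delta)\|$ with constants of comparability that do not depend on $\Delta$ or $n$.

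For the first point, I would iterate the recursion along the symbolic representation $\eta=(\gamma_{0},\gamma_{1},\ldots,\gamma_{n})$ of $\Delta$ to obtain
\[
Q_{n}(\Delta)=Q_{0}([0,1])\,T(\gamma_{0},\gamma_{1})T(\gamma_{1},\gamma_{2})\cdots T(\gamma_{n-1},\gamma_{n})=Q_{0}([0,1])\,T(\eta).
\]
Since all primitive transition matrices have non-negative entries and $Q_{0}([0,1])$ is the all-ones row vector in $\mathbb{R}^{\delta}$, the product $Q_{0}([0,1])T(\eta)$ is the row vector whose $k$-th coordinate equals $\sum_{j}T(\eta)_{jk}$. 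Summing these non-negative entries then yields $\|Q_{n}(\Delta)\|=\sum_{j,k}T(\eta)_{jk}=\|T(\eta)\|$, an exact equality with no constants incurred.

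For the second point, I would invoke the observation already recorded after the Notation defining $Q_{n}$: the explicit formula for $\mu_{\pi}(\Delta)$ together with the fact that each $\mu([a_{i},a_{i}+\ell_{n}(\Delta)])$ is strictly positive gives $\mu_{\pi}(\Delta)\sim\sum_{i}P_{n}^{(i)}(\Delta)=\|Q_{n}(\Delta)\|$. The only real point requiring care, and the step I would expect to be the most delicate, is the uniformity of the comparability constants in $\Delta$ and $n$. This is where the finite type hypothesis intervenes: by the Lemma of Section~2 there are only finitely many reduced quotient characteristic vectors, so the pairs $(a_{i},\ell_{n}(\Delta))$ take only finitely many values, and hence the list of numbers $\mu([a_{i},a_{i}+\ell_{n}(\Delta)])$ is a finite set of strictly positive reals. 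Taking $c_{1}$ and $c_{2}$ to be appropriate multiples of the minimum and maximum over this finite list produces uniform bounds. Combining the two steps yields $c_{1}\|T(\eta)\|\leq\mu_{\pi}(\Delta)\leq c_{2}\|T(\eta)\|$, as required. Beyond this finite-type compactness step there is no genuine obstacle; the remainder is bookkeeping.
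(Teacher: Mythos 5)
Your proposal is correct and follows essentially the same route as the paper: iterate the recursion $Q_n(\Delta)=Q_{n-1}(\widehat{\Delta})T(\mathcal{C}_{n-1}(\widehat\Delta),\mathcal{C}_n(\Delta))$ from the all-ones initial vector to obtain $\|Q_n(\Delta)\|=\|T(\eta)\|$, and then use the comparability $\mu_\pi(\Delta)\sim\|Q_n(\Delta)\|$ with uniform constants guaranteed (as you correctly note) by the finiteness of the set of quotient characteristic vectors and hence of the possible values of $\mu([a_i,a_i+\ell_n(\Delta)])$. This is precisely what the paper means by ``the previous results combine to yield'' the corollary, with no divergence in method.
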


Given $\Delta _{n}\in \mathcal{F}_{n}^{(\pi )}$, we let $\Delta _{n}{}^{+}$
and $\Delta _{n}^{-}$ be the quotient net intervals of level $n$ sharing
endpoints with $\Delta _{n},$ (in the torus sense). If $x$ belongs to the
interior of $\Delta _{n}(x)$ we put 
\begin{equation*}
M_{n}(x)=\mu _{\pi }(\Delta _{n}(x))+\mu _{\pi }(\Delta _{n}^{+}(x))+\mu
_{\pi }(\Delta _{n}^{-}(x)),
\end{equation*}%
while if $x$ is a boundary point of $\Delta _{n}(x)$ we put $M_{n}(x)=\mu
_{\pi }(\Delta _{n}(x))$ $+\mu _{\pi }(\Delta _{n}^{\prime }(x))$ where $%
\Delta _{n}^{\prime }(x)$ is the other net interval of level $n$ with $x$ as
endpoint. Since all quotient net intervals of level $n$ have lengths
comparable to $\rho ^{n}$ one can see in the same manner as in \cite[Thm. 2.6%
]{HHN} that 
\begin{equation*}
\dim _{loc}\mu _{\pi }(x)=\lim_{n\rightarrow \infty }\frac{\log M_{n}(x)}{%
n\log \rho },
\end{equation*}%
with a similar formula for upper and lower local dimensions. In the special
case that the probabilities defining the self-similar measure are regular
and $\pi (K)=\mathbb{T}$, the same arguments as given in \cite[Lemma 3.5 -
Cor. 3.7]{HHM} show that if $[x]=(\gamma _{0},\gamma _{1},...),$ then we
have the simpler formula,%
\begin{equation}
\dim _{loc}\mu _{\pi }(x)=\lim_{n\rightarrow \infty }\frac{\log \mu _{\pi
}(\Delta _{n}(x))}{n\log \rho }=\lim_{n\rightarrow \infty }\frac{\log
\left\Vert T(\gamma _{0},..,\gamma _{n})\right\Vert }{n\log \rho }.
\label{reg}
\end{equation}

In the study of finite type measures on $\mathbb{R}$, periodic points played
an important role. In a similar fashion, we will say that a point $x\in \pi
(K)$ is \textit{periodic} if it has quotient symbolic representation $%
[x]=(\gamma _{0},..,\gamma _{J},\theta ^{-},\theta ^{-},...)$ where $\theta $
is an admissible cycle (a path whose first and last letters coincide) and $%
\theta ^{-}$ is the path with the last letter of $\theta $ deleted. Any
point which is a boundary point of some quotient net interval is a periodic
point.

As was the case for finite type measures in $\mathbb{R}$, we have the
following formula for local dimensions of periodic points. We write $%
L(\theta ^{-})$ for the length of the path $\theta ^{-}$.

\begin{proposition}
\label{periodic}If $\mu _{\pi }$ is a measure of finite type on $\mathbb{T}$
and $x$ is a periodic point with period $\theta $, then the local dimension
exists and is given by 
\begin{equation*}
\dim _{loc}\mu _{\pi }(x)=\frac{\log sp(T(\theta ))}{L(\theta ^{-})\log
\varrho },
\end{equation*}%
where if $x$ is a boundary point of a quotient net interval with two
different symbolic representations given by periods $\theta $ and $\phi $ of
the same length, then $\theta $ is chosen with $sp(T(\theta ))\geq sp(T(\phi
))$.
\end{proposition}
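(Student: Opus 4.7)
The plan is to combine the iterative formula $\dim_{loc}\mu_{\pi}(x) = \lim_{n\to\infty}\log M_n(x)/(n\log\varrho)$ stated immediately before the proposition with Corollary \ref{measurenetint} and Gelfand's spectral radius formula for non-negative matrices.

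First I would exploit periodicity to express the transition matrix along the path $\eta_n$ of $\Delta_n(x)$ in a structured form. Write $\theta=(\alpha_0,\alpha_1,\dots,\alpha_L)$ with $\alpha_0=\alpha_L$ and $L=L(\theta^-)$. By the definition of a periodic point, $[x]=(\gamma_0,\dots,\gamma_J,\theta^-,\theta^-,\dots)$, so for $n=J+kL+r$ with $0\le r<L$ the associated path decomposes as
\begin{equation*}
T(\eta_n) = P\cdot T(\theta)^k\cdot R_r,
\end{equation*}
where $P$ is a fixed prefix matrix depending only on $(\gamma_0,\dots,\gamma_J)$ and $R_r$ is one of $L$ possible suffix matrices. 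By Corollary \ref{measurenetint}, $\mu_\pi(\Delta_n(x))\sim \|T(\eta_n)\|$ up to multiplicative constants independent of $n$.

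Next I would invoke Gelfand's formula $\lim_{k\to\infty}\|T(\theta)^k\|^{1/k}=sp(T(\theta))$. Submultiplicativity gives $\|P\cdot T(\theta)^k\cdot R_r\|\le \|P\|\cdot\|T(\theta)^k\|\cdot\|R_r\|$, producing the upper estimate
\begin{equation*}
\limsup_{n\to\infty}\frac{\log\mu_\pi(\Delta_n(x))}{n\log\varrho}\le\frac{\log sp(T(\theta))}{L(\theta^-)\log\varrho}.
\end{equation*}
The matching lower bound is the main difficulty: one must show that left- and right-multiplication by the fixed non-negative matrices $P$ and $R_r$ does not destroy the asymptotic growth of $T(\theta)^k$. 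Here I would appeal to Perron--Frobenius theory. On its dominant irreducible block, $T(\theta)^k/sp(T(\theta))^k$ converges (up to a cyclic-periodicity issue that can be smoothed by averaging over a bounded window of $k$) to a rank-one operator built from a Perron right-eigenvector $v$ and left-eigenvector $w$; the non-zero column property (and the non-zero row property when $\pi(K)=\mathbb{T}$) recorded just before Proposition~3.1 forces $w P$ and $R_r v$ to be non-zero. Consequently $\liminf_k\|P\cdot T(\theta)^k\cdot R_r\|^{1/k}\ge sp(T(\theta))$, which combined with the upper bound gives the limit.

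Finally I would transfer the result from $\mu_\pi(\Delta_n(x))$ to $M_n(x)$, which differs only by adding the measures of the one or two adjacent net intervals $\Delta_n^\pm(x)$. When $x$ is interior to every $\Delta_n(x)$ for large $n$, the symbolic representations of $\Delta_n^\pm(x)$ are eventually periodic with a period that is a cyclic shift of $\theta$, hence with the same transition-matrix spectral radius, so a bounded sum of terms of equal exponential rate does not affect the limit. When $x$ is a boundary point it admits two symbolic representations coming from admissible cycles $\theta$ and $\phi$ of the same length $L$, and $M_n(x)$ is, up to constants, a sum of two terms growing like $sp(T(\theta))^{n/L}$ and $sp(T(\phi))^{n/L}$. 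Since $\log\varrho<0$, dividing $\log$ of this sum by $n\log\varrho$ is dominated by the term with the larger spectral radius, producing the selection rule in the statement. The hardest single step is the Perron--Frobenius argument giving the lower bound on $\|P\cdot T(\theta)^k\cdot R_r\|^{1/k}$.
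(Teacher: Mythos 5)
The paper does not in fact prove Proposition~\ref{periodic} from scratch: it points to \cite[Prop.~2.7]{HHN} and observes that the cited argument needs only Corollary~\ref{measurenetint} (i.e.\ $\mu_\pi(\Delta)\sim\|T(\eta)\|$) and the fact that every primitive transition matrix has a non-zero entry in each column. Your outline agrees with the intended proof up through the decomposition $T(\eta_n)=P\,T(\theta)^k R_r$ and the Gelfand upper bound, and your handling of the boundary case (the max of the two spectral radii dominates since $\log\varrho<0$) is correct. The problem is the lower bound.

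Your Perron--Frobenius step contains a genuine gap. You assert that the non-zero column property of $P$ (together with row non-vanishing when $\pi(K)=\mathbb{T}$) forces $wP\neq 0$ for the left Perron vector $w$. But $\|wP\|_1=\sum_i w_i\,(\text{row sum}_i\,P)$, so $wP\neq 0$ is governed by the \emph{rows} of $P$, not its columns; it can fail when $P$ has zero rows, and the row property is only available under the hypothesis $\pi(K)=\mathbb{T}$, which the proposition does not assume. Similarly, the reduction to a ``dominant irreducible block'' with a rank-one limit is delicate when $T(\theta)$ is reducible and when $P,R_r$ need not map onto that block, and the ``averaging over a bounded window'' fix for cyclic periodicity is not an argument.

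All of this is unnecessary. The column property gives directly that for any non-negative $A$, $\|PA\|=\sum_j(\text{col sum}_j P)(\text{row sum}_j A)\geq c_P\|A\|$ with $c_P=\min_j\text{col sum}_j P>0$. Combining with the elementary inequality $\|A^k\|\geq sp(A)^k$ (valid for any non-negative square matrix and any submultiplicative norm) yields $\|P\,T(\theta)^k\|\geq c_P\,sp(T(\theta))^k$ with no spectral decomposition at all. The suffix $R_r$ is handled by completing the cycle: writing $T(\theta)=R_r R_r'$ and using submultiplicativity, $sp(T(\theta))^{k+1}\leq\|T(\theta)^{k+1}\|\leq\|T(\theta)^k R_r\|\cdot\|R_r'\|$, so $\|T(\theta)^k R_r\|\geq sp(T(\theta))^{k+1}/\|R_r'\|$. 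This matches the paper's remark that only the $\mu_\pi(\Delta)\sim\|T(\eta)\|$ comparison and the non-zero column property are needed. Finally, your claim that the symbolic representations of $\Delta_n^{\pm}(x)$ are eventually periodic with a cyclic shift of $\theta$ as their period is unsupported; what the transfer from $\mu_\pi(\Delta_n(x))$ to $M_n(x)$ actually uses is that for an interior periodic point the three intervals $\Delta_n(x),\Delta_n^{\pm}(x)$ have a common ancestor within a bounded number of generations (a consequence of periodicity), together with the monotonicity $M_n(x)\leq M_{n-1}(x)$.
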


The proof is the same as in \cite[Proposition 2.7]{HHN} as that argument
only required the formula we have developed for $\mu _{\pi }(\Delta )$ 
and the fact that the primitive transition matrices have a
non-zero entry in each column.

We will call a quotient loop class \textit{positive} if there is some path $%
\eta $ in the loop class for which $T(\eta )$ has strictly positive entries.
With the preliminary results we have established, the same
arguments as in \cite[Section 5]{HHM} prove the following important result.
The details are left to the reader.

\begin{theorem}
\label{regprob}Suppose $\mu _{\pi }$ is a quotient measure of finite type on 
$\mathbb{T}$ associated with regular probabilities and with supp$\mu _{\pi }=%
\mathbb{T}$. Then the set of local dimensions at points in any positive
quotient loop class is a closed interval and the local dimensions at the
periodic points in the loop class are dense in that interval.
\end{theorem}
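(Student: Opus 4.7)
The plan is to emulate the proof of the analogous theorem for finite-type measures on $\mathbb{R}$ given in \cite[Section 5]{HHM}, replacing characteristic vectors, transition matrices, and net intervals by their quotient counterparts. The essential inputs are already in place: by formula (\ref{reg}), under the hypotheses of regular probabilities and $\mathrm{supp}\,\mu_\pi = \mathbb{T}$, the local dimension at a point $x$ with quotient symbolic representation $(\gamma_0,\gamma_1,\dots)$ equals
$$\dim_{loc}\mu_\pi(x) = \lim_{n\to\infty}\frac{\log \|T(\gamma_0,\dots,\gamma_n)\|}{n\log \varrho},$$
and Proposition \ref{periodic} expresses the local dimension at a periodic point in terms of the spectral radius of a transition matrix. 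Together, these reduce the theorem to a statement about norms of admissible products of primitive transition matrices indexed by letters of the positive loop class $\Omega'$.

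First, I would identify the extremal local dimensions. Let $\alpha_{\min}$ and $\alpha_{\max}$ denote the infimum and supremum of $\dim_{loc}\mu_\pi(x)$ over points $x$ eventually in $\Omega'$. A compactness argument exploiting the finiteness of quotient characteristic vectors, together with the sub- and super-multiplicativity of the matrix norm and Proposition \ref{periodic}, shows that both extrema are attained at periodic points; call the corresponding admissible cycles $\theta_1$ and $\theta_2$.

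The crux is to realize every value in $[\alpha_{\min},\alpha_{\max}]$. Using the positivity hypothesis, choose an admissible path $\omega_0$ in $\Omega'$ such that $P_0 := T(\omega_0)$ has all entries strictly positive. The key technical lemma is a \emph{bridging inequality}: for any two admissible products $A,B$ of primitive transition matrices in $\Omega'$,
$$c_1\,\|A\|\,\|B\| \,\le\, \|A P_0 B\| \,\le\, c_2\,\|A\|\,\|B\|,$$
with $c_1,c_2>0$ depending only on $P_0$ and the finite collection of primitive matrices. The upper bound is standard submultiplicativity; the lower bound exploits a uniform positive lower bound on the entries of $P_0$, so that every pair of entries of $A$ and $B$ contributes to $AP_0 B$ without cancellation. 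Granted this lemma, for each $\alpha \in [\alpha_{\min},\alpha_{\max}]$ one builds an admissible infinite path in $\Omega'$ by concatenating blocks of the form $\theta_1^{n_k}\omega_0\theta_2^{m_k}\omega_0$, choosing the exponents $(n_k,m_k)$ so that an average of $\alpha_{\min}$- and $\alpha_{\max}$-contributions tends to $\alpha$; applying (\ref{reg}) to the resulting symbolic sequence yields a point $x$ with $\dim_{loc}\mu_\pi(x)=\alpha$. Taking the schedule to be eventually periodic (iterating a single block $\theta_1^n\omega_0\theta_2^m\omega_0$) produces \emph{periodic} points realizing a dense subset of $[\alpha_{\min},\alpha_{\max}]$, and hence yields the density claim as well as closedness of the set.

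The main obstacle is the bridging inequality: the upper bound is routine, but the uniform lower bound across all admissible $A$ and $B$ is what genuinely uses the positivity assumption on $\Omega'$ and makes the concatenation decouple into a product of norms. The rest is bookkeeping: verifying admissibility of the concatenations (which is immediate from the loop-class axiom, inserting connecting sub-paths in $\Omega'$ between blocks if needed to match endpoints), and checking that the limit in (\ref{reg}) converges to the prescribed weighted average — a standard telescoping computation combining Corollary \ref{measurenetint}, Proposition \ref{periodic}, and iterated application of the bridging inequality.
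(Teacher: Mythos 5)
Your proposal takes the same route the paper does: the proof is obtained by replaying the arguments of \cite[Section 5]{HHM} with quotient characteristic vectors and transition matrices in place of their real-line analogues, the required inputs being formula (\ref{reg}), Proposition \ref{periodic}, and the positivity hypothesis (which supplies a strictly positive bridging matrix $P_0$). Your bridging inequality and the interleaved concatenation $\theta_1^{n_k}\omega_0\theta_2^{m_k}\omega_0$ are indeed the core mechanisms; they parallel Lemma \ref{prodbd}(iv) and the constructions carried out explicitly in Section \ref{sec:4.2}.

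One step in your sketch is imprecise and should be reorganized. You assert that the infimum and supremum of local dimensions over the loop class are \emph{attained at periodic points}; that is not automatic and may fail, since there are infinitely many admissible cycles and no finiteness argument gives attainment directly (this is essentially the finiteness problem for joint spectral radii). The argument in \cite{HHM}, echoed in Section \ref{sec:4.2}, has a different shape. One shows three things: that the upper and lower local dimension at any point in the loop class is a limit of periodic-point local dimensions; that any value strictly between two periodic-point local dimensions is arbitrarily well approximated by a periodic-point local dimension (your concatenation, applied to \emph{arbitrary} periodic cycles, not just extremal ones); and that for any sequence of periodic points there exists a point, in general \emph{not} periodic, whose upper and lower local dimensions are the $\limsup$ and $\liminf$ of the sequence. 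Combining these shows the set of local dimensions equals the closure of the set of periodic-point local dimensions, which is a closed interval with periodic points dense. Your construction goes through once one builds from periodic cycles whose local dimensions \emph{approach} the endpoints rather than attain them.
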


\begin{remark}
It was shown in \cite{HHM} that for measures of finite type on $\mathbb{R}$
the essential class is always a positive loop class. In the next section we
will see that the quotient essential class (even if unique) need not be
positive.
\end{remark}

This theorem can be used to see that, as with measures of finite type on $%
\mathbb{R}$, the sets of local dimensions of measures of finite type on $%
\mathbb{T}$ may or may not admit isolated points. Here are some examples.
The details of these examples can be found in \cite{HArXiv}.

\begin{example}
\label{ex:golden} Consider $\nu ,$ the convolution square (on $\mathbb{R})$
of the uniform Bernoulli convolution with contraction factor $\rho $ the
inverse of the golden mean. This is the self-similar measure associated with
the IFS with similarities $\rho x,$ $\rho x+1-\rho $ and $\rho x+2(1-\rho )$
and probabilities $1/4,1/2$ and 1/4 respectively. In \cite[Sect. 8.2]{HHM}
it was shown that the set of local dimensions of $\nu $ admits an isolated
point. The quotient measure, $\nu _{\pi },$ has one essential class which is
positive and hence generates an interval of local dimensions. The quotient
essential primitive transition matrices and admissible paths precisely
coincide with those of $\nu $. It follows that the set of local dimensions
of $\nu _{\pi }$ at points in the quotient essential class coincides with
the interval of local dimensions of $\nu $ at points in its essential class.

There are also two additional maximal quotient loop classes, both of which
are simple loops. These generate the same three (periodic boundary) points,
namely, $0,\rho ,1-\rho $. The three points have the same local dimension
and it can be shown that this value is contained in the interval of local
dimensions generated by the quotient essential class. It follows that the
set of local dimensions of the quotient measure has no isolated point.
\end{example}

\begin{example}
\label{Ex:isolatedPt}Consider the IFS $\{S_{j}(x)=x/4+d_{j}/8:j=0,...,7\}$
where $d_{j}=j$ for $j=0,...,4$, $d_{5}=7,d_{6}=9$ and $d_{7}=12$. This IFS
generates the self-similar set $[0,2]$ and is of finite type on $\mathbb{T}$%
. There are 10 reduced quotient characteristic vectors and one quotient
essential class which consists of one reduced quotient characteristic vector
(specifically, 4 non-reduced quotient characteristic vectors). Let $\mu $ be
the self-similar measure arising from the regular probabilities $%
p_{0}=p_{7}=1/2402$, $p_{1}=p_{2}=1000/2402$, $p_{3}=p_{4}=$ $%
p_{5}=p_{6}=100/2402.$ We have verified that the quotient essential class is
of positive type when these probabilities are used. Computational work also
shows that 
\begin{equation*}
\lbrack .6283,1.885]\subseteq \{\dim \mu _{\pi }(x):x\text{ in essential
class}\}\subseteq \lbrack .614,2.053].
\end{equation*}

There are three other maximal loop classes, all of which are simple loops.
Two of these loops correspond to the single point $7/8$ whose local
dimension is $\frac{\log (100/2402)}{\log 4}\sim 2.293$. One of these two
loops corresponds to the path approaching $7/8$ from the left, and the other
to the path approaching $7/8$ from the right. There are countably many
points associated with the other maximal loop class, all having local
dimension $\sim 2.286$ (the spectral radius of the normalized transition
matrix of the length one cycle is the root of the polynomial $%
x^{2}-100x-100, $ approximately 100.99). Consequently, the set of local
dimensions of $\mu _{\pi }$ consists of a closed interval and two isolated
points. It is again the case that the interval in the set of local
dimensions of $\mu _{\pi }$ coincides with the interval component of the set
of local dimensions of $\mu $. The two isolated points are also both
isolated points in the set of local dimensions of $\mu $.
\end{example}

A weaker result can be proven if we do not assume that the probabilities are
regular or that the support of the quotient measure is the torus. Note that
by an essential quotient transition matrix, we mean a transition matrix $%
T(\eta )$ where the path $\eta $ belongs to the essential class. Recall that
if supp$\mu _{\pi }=\mathbb{T}$, then there is a non-zero entry on each row
of each primitive transition matrix.

\begin{theorem}
\label{notregprob}Suppose $\mu _{\pi }$ is a quotient measure of finite type
on $\mathbb{T}$. Assume the essential class $E$ is positive and that each
essential primitive transition matrix has a non-zero entry in each row. Then
the set of local dimensions of $\mu _{\pi }$ at points in the relative
interior of $E$ is a closed interval and the local dimensions at periodic
points from $E$ are dense in this interval.
\end{theorem}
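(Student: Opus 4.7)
The plan is to follow the strategy used for finite-type measures on $\mathbb{R}$ in \cite[\S5]{HHM}, with the row-positivity hypothesis playing the role the regular-probability hypothesis plays there. First I would establish, for $x$ in the relative interior of $E$ with symbolic representation $[x]=(\gamma_0,\gamma_1,\dots)$, the simpler formula
\begin{equation*}
\dim_{loc}\mu_\pi(x) = \lim_{n\to\infty}\frac{\log\|T(\gamma_0,\dots,\gamma_n)\|}{n\log\varrho},
\end{equation*}
together with the analogous $\limsup/\liminf$ expressions for the upper and lower local dimensions. Since $x$ lies in the relative interior of $E$, there is some $N$ past which the characteristic vectors of both $\Delta_n^+(x)$ and $\Delta_n^-(x)$ also lie in $E$. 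By the finite type property there are only finitely many primitive transition matrices available in $E$, and the assumption that each such matrix has a non-zero entry in each row is what forces the uniform bound $\mu_\pi(\Delta_n^\pm(x)) \leq C\mu_\pi(\Delta_n(x))$ for $n\geq N$: row-positivity prevents $\|T(\gamma_0,\dots,\gamma_{n-1},\gamma_n')\|$ from being much smaller than $\|T(\gamma_0,\dots,\gamma_n)\|$ when $\gamma_n$ is replaced by an adjacent sibling $\gamma_n'$ inside $E$. Combined with Corollary~\ref{measurenetint} and the general formula $\dim_{loc}\mu_\pi(x) = \lim_n \log M_n(x)/(n\log\varrho)$, this yields the displayed identity.

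Next I would exploit the positive-loop-class hypothesis: there is an admissible path $\alpha\subset E$ whose transition matrix $T(\alpha)$ has strictly positive entries. Standard positive-matrix estimates then give a two-sided comparison
\begin{equation*}
c_1\,\|T(\eta_1)\|\,\|T(\eta_2)\| \leq \|T(\eta_1 \cdot \alpha \cdot \eta_2)\| \leq c_2\,\|T(\eta_1)\|\,\|T(\eta_2)\|
\end{equation*}
for any admissible paths $\eta_1,\eta_2\subset E$ that can be joined through $\alpha$. Given two periodic points $y_1,y_2\in E$ with $d_i := \dim_{loc}\mu_\pi(y_i)$ and $d_1 \leq d_2$, and any target $d\in[d_1,d_2]$, I would mimic the construction of \cite[\S5]{HHM} by interleaving long blocks of the periods of $y_1$ and $y_2$ separated by copies of $\alpha$, with the block lengths chosen so that the ratio $\log\|T\|/(n\log\varrho)$ along the resulting symbolic path converges to $d$; by the preceding formula this produces a point in the relative interior of $E$ with local dimension exactly $d$, establishing the interval property. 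Density of periodic local dimensions follows by closing any sufficiently long admissible path $\eta\subset E$ into an admissible cycle $\theta$ via $\alpha$ and a return path in $E$; Proposition~\ref{periodic}, Gelfand's spectral radius formula, and the two-sided comparison together show that the local dimension of the resulting periodic point lies within $O(1/L(\eta))$ of $\log\|T(\eta)\|/(L(\eta)\log\varrho)$. Closedness of the interval follows from the finiteness of the primitive transition matrices, which makes the set of achievable ratios a bounded, closed subset of $\mathbb{R}$.

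The main obstacle is the first step: reducing $\log M_n(x)$ to $\log\|T(\gamma_0,\dots,\gamma_n)\|$ without regular probabilities. In \cite{HHM}, regularity delivers automatic comparability between the transition weights of adjacent children; here one must instead extract this comparability from the row-positivity hypothesis together with the fact that, by finite type, adjacent essential net intervals at level $n$ share a common ancestor a uniformly bounded number of levels back, so that the ratio of $\|T\|$ along the three nearby paths is controlled by a finite family of products of essential primitive matrices. Once this reduction is secured, the interleaving and spectral radius arguments carry over essentially verbatim from the real-line case.
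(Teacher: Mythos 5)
Your reduction in the first step contains a genuine gap. You assert that row-positivity of the essential primitive transition matrices, together with membership in the relative interior of $E$, forces $\mu_\pi(\Delta_n^\pm(x)) \leq C\mu_\pi(\Delta_n(x))$ for all large $n$, justifying this via the claim that adjacent essential net intervals share a common ancestor a uniformly bounded number of levels back. Both claims fail. Take an essential class with a single reduced characteristic vector $\gamma$ whose left and right children both reduce to $\gamma$, with strictly positive primitive matrices $A$ (left) and $B$ (right) satisfying $sp(A)<sp(B)$; this class is positive and row-positive. Let $x$ have symbolic representation consisting, past some initial segment, of ever-longer runs of the left child broken by single right-child steps. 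Then $x$ is in the relative interior of $E$; but for $n$ at the end of the $k$-th run (of length $L_k\to\infty$), the common ancestor of $\Delta_n(x)$ and $\Delta_n^-(x)$ lies about $L_k$ levels earlier, and over those $L_k$ levels $T(\gamma_0,\dots,\gamma_n)$ picks up the factor $A^{L_k}$ while the corresponding product for $\Delta_n^-(x)$ picks up essentially $B^{L_k}$. Since $A,B$ are positive, $\mu_\pi(\Delta_n^-(x))/\mu_\pi(\Delta_n(x)) \asymp (sp(B)/sp(A))^{L_k}\to\infty$, and the identity $\dim_{loc}\mu_\pi(x) = \lim_n\log\Vert T(\gamma_0,\dots,\gamma_n)\Vert/(n\log\varrho)$ fails for this $x$. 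The comparability you want is exactly what the regular-probabilities hypothesis purchases in Theorem~\ref{regprob} (via \cite[Lemma 3.5]{HHM}); row-positivity does not recover it.

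The argument the paper has in view (the references to \cite[Sect 3.3]{HHN} and to Section~\ref{sec:4.2}) avoids this reduction. One works throughout with $M_n(x)=\mu_\pi(\Delta_n(x))+\mu_\pi(\Delta_n^+(x))+\mu_\pi(\Delta_n^-(x))$, at each level replacing $\Delta_n(x)$ by whichever of the three nearby net intervals carries the largest measure (compare the choice of $\Delta_{n_\ell}'$ in the first proof of Section~\ref{sec:4.2}). Row-positivity of the essential primitives then enters to guarantee that $\Vert T(\eta)\Vert$ and $\Vert T(\widehat\eta)\Vert$, for $\widehat\eta$ the path to the parent, are comparable with constants independent of $n$, as in Lemma~\ref{prodbd}(iii); equivalently, appending or deleting a single essential primitive changes the norm by a bounded multiplicative factor. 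This controls the passage from $M_n(x)$ to a common ancestor when the symbolic representation is built from \emph{interior} periodic blocks, for which the three nearby net intervals do share a nearby common ancestor, which is all the interleaving construction requires. With step~1 replaced by this maximal-interval device, the remainder of your outline --- the two-sided comparison through a positive joiner, interleaving periodic blocks to realize intermediate dimensions, density of periodic local dimensions via Gelfand's formula, and closedness from the finiteness of the essential primitives --- carries over essentially as you describe.
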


This follows by similar arguments to those given in \cite[Sect 3.3]{HHN}.
(The proofs given in Section \ref{sec:4.2} are also similar.)

Next, we will apply this theorem to show that finite-type quotients of
self-similar measures on $\mathbb{R}$ satisfying the strong separation
condition have the same local dimension as the original measure. Recall that
an IFS $\{S_{j}:j=1,..,m\}$ with self-similar set $K$ is said to satisfy the
strong separation condition if the sets $S_{j}(K)$ are pairwise disjoint.

\begin{theorem}
\label{strictsep}Assume the equicontractive IFS $\{S_{j}=\varrho
x+d_{j}:j=0,...,k\}$ with $k\geq 1$ satisfies the strong separation
condition and is of finite type on $\mathbb{T}$. Let $\{p_{j}\}_{j=0}^{k}$
be probabilities and assume $\mu $ is the associated self-similar measure
and $\mu _{\pi }$ is its quotient measure. Then%
\begin{equation*}
\{\dim _{loc}\mu (x):x\in K\}=\left[ \frac{\log (\max p_{i})}{\log {\varrho }%
},\frac{\log (\min p_{i})}{\log {\varrho }}\right] :=I
\end{equation*}%
\begin{equation*}
=\{\dim _{loc}\mu _{\pi }(x):x\in \pi (K)\}.
\end{equation*}
\end{theorem}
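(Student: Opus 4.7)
The statement splits into two halves: the classical computation that under SSC $\{\dim_{loc}\mu(x):x\in K\}=I$, and the assertion that the same set is obtained for $\mu_\pi$. My plan is to derive both from a single formula expressing the local dimensions of $\mu_\pi$ in terms of those of $\mu$ at preimages, and then to use the freedom to choose addresses on cylinders to realise every value in $I$.

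Under SSC there is a constant $d_0>0$ with $\mathrm{dist}(S_i(K),S_j(K))\geq d_0$ for $i\neq j$, and inductively $\mathrm{dist}(S_\sigma(K),S_\tau(K))\geq \varrho^{n-1}d_0$ for distinct $\sigma,\tau\in\mathcal{A}^n$. Each $x\in K$ admits a unique address $\omega\in\mathcal{A}^{\mathbb{N}}$; for $r$ small, choosing $n$ with $\varrho^n\delta\leq r<\varrho^{n-1}\delta$, the separation bound gives $B(x,r)\cap K\subseteq S_{\omega_1\cdots\omega_n}(K)$ and $S_{\omega_1\cdots\omega_{n+c_0}}(K)\subseteq B(x,r)$ for a constant $c_0$, whence $\mu(B(x,r))\asymp p_{\omega_1}\cdots p_{\omega_n}$. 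Thus
\[
\dim_{loc}\mu(x)=\lim_{n\to\infty}\frac{1}{n\log\varrho}\sum_{i=1}^n\log p_{\omega_i}
\]
when the limit exists (with obvious modifications for the upper and lower local dimensions), and varying $\omega$ realises exactly $I$. On the torus, for $r<1/2$ the finitely many ($\leq\delta$) elements of $\pi^{-1}(x)\cap K$ differ by nonzero integers and are therefore pairwise separated by more than $2r$, so
\[
\mu_\pi\bigl(B_{\mathbb{T}}(x,r)\bigr)=\sum_{y\in\pi^{-1}(x)\cap K}\mu\bigl(B_{\mathbb{R}}(y,r)\bigr),
\]
a sum comparable to its maximum term. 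This yields
\[
\dim_{loc}\mu_\pi(x)=\min_{y\in\pi^{-1}(x)\cap K}\dim_{loc}\mu(y),
\]
and the inclusion $\{\dim_{loc}\mu_\pi(x):x\in\pi(K)\}\subseteq I$ is immediate.

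For the reverse inclusion, for each $\alpha\in I$ I need a point $y\in K$ with $\dim_{loc}\mu(y)=\alpha$ and $\pi^{-1}(\pi(y))\cap K=\{y\}$; for such a $y$, the displayed formula gives $\dim_{loc}\mu_\pi(\pi(y))=\alpha$. Introducing the wrap set $W=\bigcup_{0<|l|<\delta}(K\cap(K-l))$, which is closed in $K$, my goal reduces to locating a cylinder $S_{\sigma^*}(K)$ that lies entirely inside $K\setminus W$: any tail-extension of $\sigma^*$ then has unique $\pi$-preimage, and by choosing the tail freely I can make $\dim_{loc}\mu$ take any prescribed value in $I$ by the formula of Part 1. \textbf{The main obstacle} is showing such a wrap-free cylinder exists. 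My strategy is to work with the quotient characteristic vectors: finite type on $\mathbb{T}$ produces only finitely many reduced characteristic vectors (by the lemma on finitely many quotient characteristic vectors), and SSC should force at least one of these vectors to have quotient neighbour set of cardinality one. The associated net intervals are precisely those on which $\pi$ is locally injective, so any cylinder of sufficient depth inscribed in such a net interval provides the required $S_{\sigma^*}(K)$. The existence of a size-one neighbour vector I intend to establish by contradiction: if every reduced characteristic vector had $\geq 2$ neighbours, the map $\pi|_K$ would be everywhere at least $2$-to-$1$, which is incompatible with the strict cylinder disjointness of SSC together with an $s$-dimensional Hausdorff measure comparison between $K$ and $\pi(K)$ (using that $\pi$ is a local isometry off the zero-measure set $W$).
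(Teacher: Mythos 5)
Your reduction is sound in outline: under SSC the cylinder structure gives the usual formula for $\dim_{loc}\mu$, the torus ball decomposes as a finite sum over integer lifts, and $\subseteq$ follows from $\dim_{loc}\mu_\pi(x)\leq\min_\ell\overline{\dim}_{loc}\mu(x+\ell)$. The sticking point is exactly the one you identify — producing a region on which the torus local dimension equals a prescribed $\mathbb{R}$ local dimension — and it is here that your argument has genuine gaps.

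First, the inference ``a reduced characteristic vector with singleton neighbour set $\Rightarrow$ $\pi$ locally injective on $K$'' is false. A singleton neighbour set $(0)$ only says that \emph{all} cylinders $(\sigma,l)$ whose shifted image meets the net interval satisfy $S_\sigma(0)=a+l$; under SSC distinct $\sigma$ at the same level have distinct $S_\sigma(0)$, but two of them may differ by a nonzero integer, in which case the cylinders $S_{\sigma_1}(K)$ and $S_{\sigma_2}(K)$ are \emph{exact integer translates} of one another and every point over that net interval has multiplicity $\geq 2$ under $\pi$. So a singleton neighbour set does not give you a wrap-free cylinder $S_{\sigma^*}(K)\subset K\setminus W$. (As it happens, this multiplicity is harmless for the conclusion, because the translated cylinders carry identical normalized $\mu$-distributions and the $1\times 1$ transition matrices simply pick up a constant multiplicative factor $\sum_j p_{\sigma_j}$, so the tail still controls the local dimension — but your argument as written relies on literal injectivity, and the paper instead handles this via the transition-matrix formalism rather than trying to establish injectivity.)

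Second, the proposed contradiction argument for the existence of a singleton-neighbour vector does not work. You want to show that if every reduced characteristic vector had $\geq 2$ neighbours then $\pi|_K$ would be everywhere $\geq 2$-to-$1$, and derive a contradiction using ``$\pi$ is a local isometry off the zero-measure set $W$.'' But if $\pi|_K$ were everywhere $\geq 2$-to-$1$ then $W=K$, so the parenthetical hypothesis would be vacuous, and no contradiction follows from comparing $H^s(K)$ and $H^s(\pi(K))$ — the multiplicity bound $\leq\delta$ is perfectly consistent with $H^s(\pi(K))\leq H^s(K)$. The argument is circular as stated.

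The paper avoids both problems by an entirely constructive gap argument: since SSC forces $\mathcal{L}(K)=0$, hence $\mathcal{L}(\pi(K))=0$, at large levels the projected cylinder images do not cover $\mathbb{T}$ and there is an open gap $(a_i,a_{i+1})$; the net interval $\Delta_n=[a_{i+1},a_{i+2}]$ adjacent to that gap necessarily has neighbour set $(0)$ because any covering cylinder image must begin exactly at $a_{i+1}$; iterating down the leftmost children produces the reduced characteristic vector $\gamma=(\delta,(0))$, which (again by SSC) reproduces itself under all $k+1$ maps and is the unique essential class; and Theorem~\ref{notregprob} plus the simple formula \eqref{reg} then yield the full interval $I$. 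If you replace your contradiction argument with this Lebesgue-null gap argument, your plan becomes workable, but you should either (i) prove directly that the cylinder you obtain is genuinely wrap-free (which the gap argument does not automatically give), or (ii) drop the wrap-free requirement and argue, via the $P_n^{(i)}$ formula, that the local dimension over a singleton-neighbour net interval is controlled by the digit tail regardless of the multiplicity of $\pi$. As it stands, the proposal is missing the central existence step.
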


\begin{proof}
The first equality is well known (c.f. \cite[ch. 11]{Fa}), so we only need
to prove the second.

As usual, denote by $[0,\delta ]$ the convex hull of the self-similar set $K$%
. Since the IFS satisfies the strong separation condition, the Lebesgue
measure of $K$ is zero. As $\pi (K)$ can be identified with $\bigcup_{n\in 
\mathbb{Z}}(K_{n}-n)$ where $K_{n}=K\cap \lbrack n,n+1)$, it follows that $%
\pi (K)$ also has Lebesgue measure (on $\mathbb{T}$) equal to zero.
Consequently, for $n$ sufficiently large, the Lebesgue measure of $\pi
\left( \bigcup_{|\sigma |=n}S_{\sigma }([0,\delta ])\right) <1$ and hence
there must be intervals $(a_{i},a_{i+1})\subseteq (0,1)$ such that both $%
a_{i}$ and $a_{i+1}$ are in $\pi \left( \bigcup_{|\sigma |=n}S_{\sigma
}([0,\delta ])\right) $, while $(a_{i},a_{i+1})\cap $ $\pi \left(
\bigcup_{|\sigma |=n}S_{\sigma }([0,\delta ])\right) $ is empty.

As $\pi (K)$ is a perfect set, $a_{i+1}$ is a limit point and thus there is
a quotient net interval of level $n,$ say $\Delta _{n}=[a_{i+1},a_{i+2}],$
adjacent (in the torus sense) to the empty interval. Moreover, since $%
(a_{i},a_{i+1})\cap \pi (S_{\sigma }([0,\delta ]))=\emptyset $ for all $%
|\sigma |=n,$ we see that if $[a_{i+1},a_{i+2}]\subset \pi (S_{\sigma
}([0,\delta ])),$ then $\pi (S_{\sigma }(0))=a_{i+1}$. This implies that the
neighbour of $\Delta _{n}$ is simply the singleton $(0)$.

Let $\Delta _{n+1}$ be the left-most descendent of $\Delta _{n}$. The same
reasoning shows the only neighbour of $\Delta _{n+1}$ is the singleton $(0)$%
. The same holds more generally for the left-most descendent of each
generation.

Choose $\ell $ such that $\delta \varrho ^{n+\ell }<|\Delta _{n}|$ and let $%
\Delta _{n+\ell }$ be the left-most descendent of $\Delta _{n}$ at level $%
n+\ell $. As $\Delta _{n+\ell }$ is the intersection of a set of size $%
\delta \varrho ^{n+\ell }$ with $\Delta _{n}$, we see that $\Delta _{n+\ell
} $ will have normalized length $\delta $. This proves that $\mathcal{\gamma 
}=(\delta ,(0))$ is a reduced quotient characteristic vector.

Assume $\pi (S_{\sigma }([0,\delta ]))=\Delta _{n+\ell }$. As the IFS
satisfies the strong separation condition, we see that $\pi (S_{\sigma
i}([0,\delta ]))$ are all disjoint scaled copies of $\Delta _{n+\ell },$
with precisely the same geometry. This shows that the $k+1$ children of $%
\gamma $ have again the same reduced characteristic vector $\gamma $.
Moreover the probabilities associated to these children, in order from left
to right, are $p_{0},p_{1},\dots ,p_{k}$, thus the corresponding transition
matrices are these $1\times 1$ positive matrices.

To this point, we have proven that $\{\gamma \}$ is \emph{one} (reduced)
essential class of $\mu _{\pi }$. Next, we will show that it is \emph{the}
essential class. We prove this by showing $\gamma $ is a descendent of any
quotient characteristic vector.

Let $\Delta $ be a net interval in $\mathcal{F}_{n}^{(\pi )}$. By an
argument similar to above we see that there exists a $j$ such that $\Delta $
will contain a quotient net interval $\Delta _{0}=[a_{i+1},a_{i+2}]\subseteq 
\mathcal{F}_{n+j}^{(\pi )}$, that is adjacent to a set $(a_{i},a_{i+1})$
which is disjoint from all $\pi \left( S_{\sigma }([0,\delta ])\right) $ for 
$|\sigma |=n+j$. We then proceed as before, taking the left-most descendents
of $\Delta _{0}$ to find a quotient net subinterval that has characteristic
vector $\gamma $. Hence $\{\gamma \}$ is \emph{the} (reduced) essential
class and the essential class consists of the $k+1$ vectors $\gamma
_{j}=(\delta ,(0),j)$, $j=0,...,k$.

From the previous theorem it follows that the set of local dimensions at
points in the relative interior of the essential class is a closed interval.
Assume $p_{\alpha }=\min p_{i}$ and $p_{\beta }=\max p_{i}$. It is easy to
see that $[x]=(\chi _{0},\chi _{1},...,\chi _{J},\gamma _{0},\gamma
_{k},\gamma _{\alpha },\gamma _{\alpha },...)$ and $[y]=(\chi _{0},\chi
_{1}^{\prime },...,\chi _{J}^{\prime },\gamma _{0},\gamma _{k},\gamma
_{\beta },\gamma _{\beta },...)$ belong to the interior of the essential
class for suitable admissible paths $\chi _{0},\chi _{1},...,\chi _{J}$ and $%
\chi _{0},\chi _{1}^{\prime },...,\chi _{j}^{\prime }$ and that%
\begin{equation*}
\dim _{loc}\mu _{\pi }(x)=\frac{\log (\min p_{i})}{\log {\varrho }},\dim
_{loc}\mu _{\pi }(y)=\frac{\log (\max p_{i})}{\log {\varrho }}.
\end{equation*}%
Consequently, the set of local dimensions of $\mu _{\pi }$ contains the
interval $I$.

The definition of the quotient measure implies that 
\begin{equation*}
\min_{\ell }\left( \underline{\dim }_{loc}\mu (x+\ell )\right) \leq \dim
_{loc}\mu _{\pi }(x)\leq \min_{\ell }\left( \overline{\dim }_{loc}\mu
(x+\ell )\right)
\end{equation*}%
where the minimum is taken over all integers $\ell $ such that $x+\ell \in $%
supp$\mu $. Since the upper and lower local dimensions of $\mu $ at any
point in its support lie in the interval $I$, it follows that the same is
true for $\dim _{loc}\mu _{\pi }(x)$ at any $x\in \pi (K)$. This completes
the proof.
\end{proof}

\begin{remark}
We remark that Example \ref{NotFT} demonstrates that satisfying the strong
separation condition and having the convex hull of the support being $[0,2]$
is not enough to guarantee that the quotient measure is of finite type.
\end{remark}

\section{Quotients of Cantor-like measures of finite type}

\label{sec:Cantor}

\subsection{Finite-type structure of quotients of Cantor-like measures}

In this section, we will investigate the finite type structure and local
dimensions of quotients of the Cantor-like measures associated with the IFS 
\begin{equation}
\left\{ S_{j}(x)=\frac{1}{d}x+\frac{j(d-1)}{d}\text{ for }j\in \Lambda
\right\} \text{ and probabilities }p_{j}>0,  \label{Cantor-like}
\end{equation}%
where $\Lambda \subseteq \{0,1,...,k\}$ and $d\geq 3$. We will assume the
convex hull of $K$ is the interval $[0,k]$, equivalently, $0,k\in \Lambda $.
The strong separation condition is satisfied if $k<d-1$, so this case is
handled by Theorem \ref{strictsep} giving the following.

\begin{corollary}
Let $\mu _{\pi }$ be the quotient of the self-similar measure $\mu $
associated with the IFS (\ref{Cantor-like}) with $k<d-1$ and $%
\{0,k\}\subseteq \Lambda $. Then 
\begin{eqnarray*}
\{\dim _{loc}\mu _{\pi }(x) &:&x\in \text{supp}\mu _{\pi }\}=\left[ \frac{%
-\log (\max p_{i})}{\log {d}},\frac{-\log (\min p_{i})}{\log {d}}\right] \\
&=&\{\dim _{loc}\mu (x):x\in \text{supp}\mu \}.
\end{eqnarray*}
\end{corollary}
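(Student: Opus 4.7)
The plan is to verify that the hypotheses of Theorem \ref{strictsep} are satisfied and then translate its conclusion using $\varrho=1/d$.

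First I would check the finite type condition on $\mathbb{T}$. Since $\varrho=1/d$ with $d\geq 3$ an integer, and every positive integer is a Pisot number, the contraction ratio is the inverse of a Pisot number. The translations $d_j=j(d-1)/d$ lie in $\mathbb{Q}[1/d]=\mathbb{Q}$. Solving the fixed-point equation for the convex hull of the self-similar set gives $[0,k]$, and since $0,k\in\Lambda$ by assumption, $k$ is an integer. Thus Proposition \ref{ft} applies and $\mu_\pi$ is of finite type on $\mathbb{T}$.

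Next I would verify the strong separation condition for the IFS. Listing the images $S_j([0,k])=[j(d-1)/d,\,(j(d-1)+k)/d]$ for $j\in\Lambda\subseteq\{0,\dots,k\}$, the right endpoint of $S_j([0,k])$ is $(j(d-1)+k)/d$ and the left endpoint of $S_{j+1}([0,k])$ is $((j+1)(d-1))/d$, so the gap between consecutive image convex hulls equals $(d-1-k)/d$. Under the hypothesis $k<d-1$ this gap is strictly positive, so the sets $S_j(K)$ are pairwise disjoint and the strong separation condition holds.

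With both hypotheses verified, Theorem \ref{strictsep} gives
\begin{equation*}
\{\dim_{loc}\mu(x):x\in K\}=\left[\frac{\log(\max p_i)}{\log\varrho},\frac{\log(\min p_i)}{\log\varrho}\right]=\{\dim_{loc}\mu_\pi(x):x\in\pi(K)\}.
\end{equation*}
Substituting $\varrho=1/d$, so that $\log\varrho=-\log d$, rewrites the interval as $\left[\frac{-\log(\max p_i)}{\log d},\frac{-\log(\min p_i)}{\log d}\right]$, which is the stated conclusion. There is no real obstacle here; the only thing that requires a line of verification is the strong separation gap computation, and the statement is essentially a direct specialization of Theorem \ref{strictsep} to the Cantor-like IFS.
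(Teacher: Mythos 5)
Your proof is correct and follows the same route the paper takes: the paper notes immediately before this corollary that the strong separation condition holds when $k<d-1$ and that the corollary follows from Theorem \ref{strictsep}, with the finite type hypothesis guaranteed by Proposition \ref{ft}. Your explicit gap computation and the $\varrho=1/d$ substitution just fill in the details the paper leaves to the reader.
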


Thus we will assume $k\geq d-1$. If $\Lambda =\{0,1,..,k\}$, the associated
self-similar measures have support equal to the (full) interval $[0,k]$,
while the OSC fails if $k>d-1$.

In the special case that $\Lambda =\{0,1,..,k\}$ and the probabilities
satisfy $p_{j}=\binom{k}{j}2^{-k}$, then the associated self-similar measure
on $\mathbb{R}$ is the $k$-fold convolution of the uniform Cantor measure
with contraction ratio $1/d$. In this case, the corresponding quotient
measure is also equal to the $k$-fold convolution (taken on the torus) of
the uniform Cantor measure.

Cantor-like measures are of finite type on $\mathbb{R}$ (c.f. \cite{HHM, HHN}%
). In \cite{HHN} this fact was used to study their local dimensions,
extending earlier work of \cite{BHM, HL, Sh}. For instance, it was shown
that if $\Lambda =\{0,1,..,k\}$, $k\geq d$ and $p_{0}<p_{j}$ for all $j\neq
0,k$, then there is an isolated point in the set of local dimensions.

Of course, the corresponding quotient measures are also of finite type on $%
\mathbb{T}$ according to Proposition \ref{ft}. In this section we will prove
that for many of these quotient measures the set of local dimensions is a
closed interval. Typically, the essential class is the full torus (and hence
is unique), however it is not in general of positive type, so we cannot
appeal to either Theorem \ref{regprob} or \ref{notregprob}. Rather, we will
modify the previous approach.

We begin our study of these Cantor-like measures by investigating their
finite-type structures. Observe that for all these IFS the endpoints of
quotient net intervals of level $n$ belong to the set $\{jd^{-n}:j=0,...,k\}$
and their lengths are at most $kd^{-n}$. Neighbours will always belong to $%
\{0,...,k\}$.

When $\Lambda =\{0,1,...,k\}$ more can be said.

\begin{lemma}
Consider the IFS (\ref{Cantor-like}) where $\Lambda =\{0,1,...,k\}$ and $%
k\geq d-1$.

\begin{enumerate}
\item The quotient net intervals of level $n$ are the sets $%
[jd^{-n},(j+1)d^{-n}]$ for $j=0,...,d^{n}-1$.

\item There is only one reduced quotient characteristic vector, namely $%
(1,(0,1,...,k-1))$. It has $d$ (identical) children.
\end{enumerate}
\end{lemma}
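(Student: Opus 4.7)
The plan is to compute the endpoints of quotient net intervals of level $n$ explicitly, then read off the neighbour set. Unfolding the IFS gives $S_\sigma(0)=(d-1)N_\sigma/d^n$ where $N_\sigma=\sum_{i=1}^n \sigma_i d^{n-i}$ with $\sigma_i\in\{0,\dots,k\}$. Since $k\geq d-1$, the standard base-$d$ expansion shows that every integer in $\{0,1,\dots,d^n-1\}$ is realized as some $N_\sigma$ (its base-$d$ digits lie in $\{0,\dots,d-1\}\subseteq\{0,\dots,k\}$). The key arithmetic fact is $\gcd(d-1,d^n)=1$, so multiplication by $d-1$ is a bijection of $\mathbb{Z}/d^n\mathbb{Z}$. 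It follows that
\[
\{\pi(S_\sigma(0)):\sigma\in\mathcal{A}^n\}=\left\{\tfrac{(d-1)N\bmod d^n}{d^n}:0\leq N<d^n\right\}=\left\{\tfrac{j}{d^n}:0\leq j<d^n\right\},
\]
and since $S_\sigma(k)=k/d^n+S_\sigma(0)$, the same argument yields the identical set for $\pi(S_\sigma(k))$. Because $K=[0,k]$ surjects onto $\mathbb{T}$, every interval $[j/d^n,(j+1)/d^n]$ meets the support, which establishes (1).

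For (2), each such $\Delta=[j/d^n,(j+1)/d^n]$ has normalized length $d^n\cdot d^{-n}=1$. A candidate neighbour
\[
a_i=d^n(j/d^n-S_\sigma(0)+l)=j-(d-1)N_\sigma+ld^n
\]
arises iff the interior of $\Delta+l$ meets $S_\sigma(K)=\bigl[(d-1)N_\sigma/d^n,((d-1)N_\sigma+k)/d^n\bigr]$. Writing out these inequalities and noting that each of $j,(d-1)N_\sigma,ld^n$ is an integer, the condition becomes $(d-1)N_\sigma-j-ld^n\in\{-k+1,\dots,0\}$, i.e., $a_i\in\{0,1,\dots,k-1\}$. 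Conversely, for any $a\in\{0,\dots,k-1\}$, invertibility of $d-1$ modulo $d^n$ produces $N\in\{0,\dots,d^n-1\}$ with $(d-1)N\equiv j-a\pmod{d^n}$ and a corresponding integer $l$, so every value in $\{0,\dots,k-1\}$ is realized. Thus the reduced quotient characteristic vector is $(1,(0,1,\dots,k-1))$ for every $\Delta\in\mathcal{F}_n^{(\pi)}$. The parent-child structure is then immediate from (1): each $[i/d^{n-1},(i+1)/d^{n-1}]$ contains precisely the $d$ level-$n$ intervals with $j\in\{id,\dots,id+d-1\}$, all sharing this reduced characteristic vector.

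The main obstacles are both number-theoretic: identifying which multiples of $d-1$ appear modulo $d^n$ (needed for (1)) and verifying that every element of $\{0,\dots,k-1\}$ genuinely arises as a neighbour (needed for (2)). Both reduce to the single coprimality statement $\gcd(d-1,d^n)=1$, together with the hypothesis $k\geq d-1$ which guarantees that all base-$d$ digits are available.
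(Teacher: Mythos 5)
Your proof is essentially correct and takes a cleaner, more uniform route than the paper. The paper establishes part (1) at each level $n$ by observing that $\sum_i j_i d^{-i}(d-1) \bmod 1$ realizes every $jd^{-n}$, which is the same idea you make explicit via $\gcd(d-1,d^n)=1$; but for part (2) the paper works only at level $1$, exhibiting explicit parameter choices $J=d-(j-i),\ \ell=d-1-(j-i)$ (for $i\le j$) and $J=\ell=i-j$ (for $i>j$), and then appeals to self-similarity to propagate to higher levels. You instead compute the neighbour set directly at level $n$ from the coprimality fact, which avoids the two-step reduction and treats all levels on the same footing. That is a genuine simplification.

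There is, however, a gap in your converse (existence) direction of part (2). After choosing $N\in\{0,\dots,d^n-1\}$ with $(d-1)N\equiv j-a\pmod{d^n}$ you assert that "a corresponding integer $l$" exists and conclude immediately. But $l$ is constrained to $\{0,\dots,\delta-1\}=\{0,\dots,k-1\}$ by the definition of $\Delta^{(l)}$, and this must be verified. It does hold: $l\ge 0$ because $(d-1)N\ge 0$ while $j-a\le d^n-1$, so $(d-1)N-(j-a)>-d^n$; and $l\le k-1$ follows since $(d-1)N-(j-a)\le(d-1)(d^n-1)+(k-1)$, whence $l< d-1+(k-d+1)/d^n\le k$, using $k\ge d-1$. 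But this is precisely where the hypothesis $k\ge d-1$ enters the bound on $l$, so the verification is not optional. In short: the approach works, but the claim "so every value in $\{0,\dots,k-1\}$ is realized" needs the short calculation above showing $l\in\{0,\dots,k-1\}$; as written the argument does not earn that conclusion. (For comparison, the paper's explicit formulas carry this verification implicitly, though they too are slightly delicate in the boundary case $k=d-1$, $i=j$.)
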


\begin{proof}
The iterates of $0$ at level one are the points $j(d-1)/d$ for $j=0,...,k$.
Taking these mod $1$ gives the points $j/d$ for $j=0,...,d-1$. The iterates
of $k$ are also in $\mathbb{Z}/d$ and so taken mod $1$ give no additional
terms.

At step $n$ the iterates of $0$ are the points $%
\sum_{i=1}^{n}j_{i}d^{-i}(d-1)$, where $0\leq j_{i}$ $\leq k$ and as $k\geq
d-1,$ taking these mod $1$ again this gives all $jd^{-n}$. The iterates of $%
k $ add no new terms. Thus the net intervals at each level are as claimed.

To determine the neighbours, consider the net interval $[j/d,$ $(j+1)/d]$ of
level one. The neighbours are the integers of the form $d\left(
j/d+l-S_{J}(0)\right) $ where $l$ is an integer, $J\in \{0,1,...,k\}$ and%
\begin{equation}
\left[ \frac{j}{d}+l,\frac{j+1}{d}+l\right] \subseteq \lbrack
S_{J}(0),S_{J}(k)]=\left[ \frac{J(d-1)}{d},\frac{J(d-1)+k}{d}\right] .
\label{incl}
\end{equation}%
The inequalities implied by (\ref{incl}) ensure that such integers are
contained in $\{0,1,...,k-1\}$. Easy computations show that if $i\in
\{0,...,j\}$, $J=d-(j-i)$ and $l=d-1-(j-i)$, then $i=d\left(
j/d+l-S_{J}(0)\right) $ and all the requirements are satisfied. Similarly,
for $i\in \{j+1,...,k-1\}$, we see that $d\left( j/d+l-S_{J}(0)\right) =i$
when $J=i-j=l$. Further, all the additional requirements are met. This
proves the neighbours are precisely the set $\{0,1,...,k-1\}$. In
particular, there is are $d$ children at level one of the parent interval $%
[0,1]$, but only one reduced characteristic vector, $(1,(0,1,...,k-1))$.

The same statement holds for the higher levels because of self-similarity.
\end{proof}

\begin{remark}
We remark that even when $\Lambda $ is a proper subset of $\{0,1,...,k\}$,
the conclusions of this lemma are often true. For instance, one can check
that this is the case if $d=4$, $k=7$ and $\Lambda $ consists of all but one
integer $j$ chosen from $\{1,...,6\}$.
\end{remark}

From here on we will restrict our attention to IFS and quotients of their
self-similar measures for which the conclusion of the lemma holds. We will
refer to these as\textbf{\ }\textit{complete quotient Cantor-like}\textbf{\ }%
\textit{measures}. Note that the support of such quotient measures is the
full torus and they have $d$ primitive transition matrices that we label as $%
T(\ell )$ for $\ell =0,...,d-1,$ where $\ell $ denotes the $\ell $'th child
from left to right. These matrices are computed in the next lemma.

\begin{lemma}
\label{lem:T} The primitive transition matrices $T(\ell )$ for $\ell \in
\{0,\dots ,d-1\}$ for a complete quotient Cantor-like measure satisfying (%
\ref{Cantor-like}) are the $k\times k$ matrices with $j,i$ entry equal to 
\begin{equation*}
T(\ell )_{ji}=p_{s}\text{ if }\frac{\ell -(i-1)+(j-1)d}{d-1}=s\text{ with }%
s\in \Lambda
\end{equation*}%
and $0$ otherwise.
\end{lemma}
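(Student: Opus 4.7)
The plan is to read off $T(\ell)_{ji}$ directly from the definition of the primitive transition matrix, exploiting the rigid self-similar structure established in the preceding lemma. Because every parent $\widehat{\Delta}$ and every child $\Delta$ of a complete quotient Cantor-like measure share the single reduced quotient characteristic vector $(1,(0,1,\ldots,k-1))$, the row and column indices $j,i\in\{1,\ldots,k\}$ of $T(\ell)$ correspond to the neighbours $c_j=j-1$ and $a_i=i-1$. I fix $n\geq 1$, write $\widehat{\Delta}=[c,\,c+d^{-(n-1)}]$, and let $\Delta=[a,\,a+d^{-n}]$ with $a=c+\ell d^{-n}$ be the $\ell$-th child from the left, $\ell\in\{0,\ldots,d-1\}$.

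The forward implication is a short calculation. By definition, $T(\ell)_{ji}=p_s$ requires some $\sigma\in\mathcal{A}^{n-1}$ and integer $l$ with
\begin{equation*}
S_\sigma(0)=c-d^{-(n-1)}(j-1)+l,\qquad S_{\sigma s}(0)=a-d^{-n}(i-1)+l.
\end{equation*}
Subtracting these two equations, using the identity $S_{\sigma s}(0)-S_\sigma(0)=\varrho^{n-1}d_s=d^{-n}s(d-1)$ noted just after the definition of the transition matrices, together with $a-c=\ell d^{-n}$, and clearing the common factor of $d^{-n}$, yields $s(d-1)=\ell-(i-1)+d(j-1)$, which is exactly the claimed formula.

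For the converse, I would appeal to the previous lemma, which says that $\widehat{\Delta}$ realises every element of $\{0,\ldots,k-1\}$ as a neighbour. Thus for any $j\in\{1,\ldots,k\}$ there exist $\sigma\in\mathcal{A}^{n-1}$ and $l\in\mathbb{Z}$ fulfilling the first defining equation. Whenever the $s$ produced by the formula actually lies in $\Lambda\subseteq\mathcal{A}$, the word $\sigma s$ is in $\mathcal{A}^n$, and substituting into $S_{\sigma s}(0)=S_\sigma(0)+d^{-n}s(d-1)$ recovers the second equation automatically, so $T(\ell)_{ji}=p_s$.

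The main obstacle is purely one of bookkeeping: one must verify that when $(\ell-(i-1)+(j-1)d)/(d-1)$ either fails to be an integer or does not belong to $\Lambda$, no legitimate pair $(\sigma,l)$ can produce a non-zero entry. This is immediate from the uniqueness of $s$ noted in the paper just after the definition of the primitive transition matrices: the algebra above shows $s$ is forced by $j,i,\ell$, so no alternative choice of $(\sigma,l)$ can yield a different admissible $s$, and the definition correctly returns $T(\ell)_{ji}=0$ in these residual cases.
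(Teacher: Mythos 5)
Your proof is correct and takes essentially the same approach as the paper: both read off $s$ from the two defining equations of the primitive transition matrix, using that the parent and child neighbour sets are $(0,1,\ldots,k-1)$ indexed from $1$. The paper performs the identical computation at level $n=1$ with $\widehat\Delta=[0,1]$ and $\Delta=[\ell/d,(\ell+1)/d]$, whereas you carry it out at general level $n$ and also spell out the converse and the zero cases, which the paper leaves implicit in ``solving for $s$.''
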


\begin{example}
Consider the case when $d=4$, $k=7$ and $\Lambda =\{0,1,...,7\}$. The unique
reduced characteristic vector is $(1,(0,1,...,6))$ and there are four
transition matrices

\begin{align*}
T(0)& =%
\begin{bmatrix}
p_{0} & 0 & 0 & 0 & 0 & 0 & 0 \\ 
0 & p_{1} & 0 & 0 & p_{0} & 0 & 0 \\ 
0 & 0 & p_{2} & 0 & 0 & p_{1} & 0 \\ 
p_{4} & 0 & 0 & p_{3} & 0 & 0 & p_{2} \\ 
0 & p_{5} & 0 & 0 & p_{4} & 0 & 0 \\ 
0 & 0 & p_{6} & 0 & 0 & p_{5} & 0 \\ 
0 & 0 & 0 & p_{7} & 0 & 0 & p_{6}%
\end{bmatrix}
& T(1)& =%
\begin{bmatrix}
0 & p_{0} & 0 & 0 & 0 & 0 & 0 \\ 
0 & 0 & p_{1} & 0 & 0 & p_{0} & 0 \\ 
p_{3} & 0 & 0 & p_{2} & 0 & 0 & p_{1} \\ 
0 & p_{4} & 0 & 0 & p_{3} & 0 & 0 \\ 
0 & 0 & p_{5} & 0 & 0 & p_{4} & 0 \\ 
p_{7} & 0 & 0 & p_{6} & 0 & 0 & p_{5} \\ 
0 & 0 & 0 & 0 & p_{7} & 0 & 0%
\end{bmatrix}
\\
T(2)& =%
\begin{bmatrix}
0 & 0 & p_{0} & 0 & 0 & 0 & 0 \\ 
p_{2} & 0 & 0 & p_{1} & 0 & 0 & p_{0} \\ 
0 & p_{3} & 0 & 0 & p_{2} & 0 & 0 \\ 
0 & 0 & p_{4} & 0 & 0 & p_{3} & 0 \\ 
p_{6} & 0 & 0 & p_{5} & 0 & 0 & p_{4} \\ 
0 & p_{7} & 0 & 0 & p_{6} & 0 & 0 \\ 
0 & 0 & 0 & 0 & 0 & p_{7} & 0%
\end{bmatrix}
& T(3)& =%
\begin{bmatrix}
p_{1} & 0 & 0 & p_{0} & 0 & 0 & 0 \\ 
0 & p_{2} & 0 & 0 & p_{1} & 0 & 0 \\ 
0 & 0 & p_{3} & 0 & 0 & p_{2} & 0 \\ 
p_{5} & 0 & 0 & p_{4} & 0 & 0 & p_{3} \\ 
0 & p_{6} & 0 & 0 & p_{5} & 0 & 0 \\ 
0 & 0 & p_{7} & 0 & 0 & p_{6} & 0 \\ 
0 & 0 & 0 & 0 & 0 & 0 & p_{7}%
\end{bmatrix}%
.
\end{align*}%
We remark that these are also the primitive transition matrices in the case
that $\Lambda $ omits only one integer $j$ other than $0$ or $7$, with the
understanding that the entries denoted $p_{j}$ have value zero.
\end{example}

\begin{proof}[Proof of Lemma \protect\ref{lem:T}]
The $\ell $'th child of parent $[0,1]$ is the net interval $\left[ \frac{%
\ell }{d},\frac{\ell +1}{d}\right] $. The $j$'th neighbour of $[0,1]$ is $%
j-1 $ and the $i$'th neighbour of $\left[ \frac{\ell }{d},\frac{\ell +1}{d}%
\right] $ is $i-1$. The entry $T(\ell )_{ji}$ will be $p_{s}$ where $s\in
\{0,...,k\}$ is such that 
\begin{equation*}
0-(j-1)+\frac{s(d-1)}{d}=\frac{l}{d}-\frac{(i-1)}{d}.
\end{equation*}%
Solving for $s$ yields the desired result.
\end{proof}

The transition matrices of complete quotient Cantor-like measures have not
only a non-zero entry in each column, but also a non-zero entry in each row
as the support of the quotient measure is full.

We next permute the rows and columns of these transition matrices to produce
matrices ${{\tilde{T}}(\ell )}$ with a natural block structure, where block $%
(i,j)$ for $i,j\in \{1,...,d-1\}$ has size $\left( \left\lfloor \frac{k-i}{%
d-1}\right\rfloor +1\right) \times \left( \left\lfloor \frac{k-j}{d-1}%
\right\rfloor +1\right) $. The $(i^{\prime },j^{\prime })$ entry of block $%
(i,j)$ of ${{\tilde{T}}(\ell )}$ will have as its entry the $(i+i^{\prime
}(d-1),j+j^{\prime }(d-1))$ entry of $T({\ell )}$, $i^{\prime }$ and $%
j^{\prime }$ being indexed beginning at $0$. Doing this permutation is not
necessary, but it makes the algebraic manipulations simpler in what follows.

Given such a block matrix $B,$ we will write $B(i,j)$ for the $(i,j)$ block.
We will say that block matrix $B$ (with this structure) is of \textit{type} $%
r$ if $B(i,j)\neq 0$ only if $j-i\equiv r \mod{(d-1)}$.

It is easy to see that the permuted transition matrix $\widetilde{T}(\ell )$
is type $\ell $ (mod $(d-1)$). Furthermore, $\widetilde{T}(\ell )$ has the
special property that if $j-i\equiv \ell \mod{(d-1)}$, then the matrix $%
\widetilde{T}(\ell )(i,j)$ has at least one non-zero entry in each row and
column.

\begin{example}
Consider a complete quotient Cantor-like measure with $d=4$ and $k=7$.
Permuting the rows and columns yields: 
\begin{align*}
\tilde{T}(0)& =\left[ 
\begin{array}{lll|ll|ll}
p_{0} & 0 & 0 & 0 & 0 & 0 & 0 \\ 
p_{4} & p_{3} & p_{2} & 0 & 0 & 0 & 0 \\ 
0 & p_{7} & p_{6} & 0 & 0 & 0 & 0 \\ \hline
0 & 0 & 0 & p_{1} & p_{0} & 0 & 0 \\ 
0 & 0 & 0 & p_{5} & p_{4} & 0 & 0 \\ \hline
0 & 0 & 0 & 0 & 0 & p_{2} & p_{1} \\ 
0 & 0 & 0 & 0 & 0 & p_{6} & p_{5}%
\end{array}%
\right] & \tilde{T}(1)& =\left[ 
\begin{array}{lll|ll|ll}
0 & 0 & 0 & p_{0} & 0 & 0 & 0 \\ 
0 & 0 & 0 & p_{4} & p_{3} & 0 & 0 \\ 
0 & 0 & 0 & 0 & p_{7} & 0 & 0 \\ \hline
0 & 0 & 0 & 0 & 0 & p_{1} & p_{0} \\ 
0 & 0 & 0 & 0 & 0 & p_{5} & p_{4} \\ \hline
p_{3} & p_{2} & p_{1} & 0 & 0 & 0 & 0 \\ 
p_{7} & p_{6} & p_{5} & 0 & 0 & 0 & 0%
\end{array}%
\right] \\
\tilde{T}(2)& =\left[ 
\begin{array}{lll|ll|ll}
0 & 0 & 0 & 0 & 0 & p_{0} & 0 \\ 
0 & 0 & 0 & 0 & 0 & p_{4} & p_{3} \\ 
0 & 0 & 0 & 0 & 0 & 0 & p_{7} \\ \hline
p_{2} & p_{1} & p_{0} & 0 & 0 & 0 & 0 \\ 
p_{6} & p_{5} & p_{4} & 0 & 0 & 0 & 0 \\ \hline
0 & 0 & 0 & p_{3} & p_{2} & 0 & 0 \\ 
0 & 0 & 0 & p_{7} & p_{6} & 0 & 0%
\end{array}%
\right] & \tilde{T}(3)& =\left[ 
\begin{array}{lll|ll|ll}
p_{1} & p_{0} & 0 & 0 & 0 & 0 & 0 \\ 
p_{5} & p_{4} & p_{3} & 0 & 0 & 0 & 0 \\ 
0 & 0 & p_{7} & 0 & 0 & 0 & 0 \\ \hline
0 & 0 & 0 & p_{2} & p_{1} & 0 & 0 \\ 
0 & 0 & 0 & p_{6} & p_{5} & 0 & 0 \\ \hline
0 & 0 & 0 & 0 & 0 & p_{3} & p_{2} \\ 
0 & 0 & 0 & 0 & 0 & p_{7} & p_{6}%
\end{array}%
\right].
\end{align*}
\end{example}

Hereafter, when we speak of a \textit{block matrix} we will mean a matrix $B$
with this block structure of type $\ell $ for some $\ell $ and having the
property that $B(i,j)$ has a non-zero entry in each row and column if $%
j-i\equiv \ell \mod{(d-1)}$. The block matrices of type $0$ will be called 
\textit{block diagonal}. By a\textit{\ block positive} matrix we will mean a
block matrix of type $\ell $ with all entries of $B(i,j)$ strictly positive
for $j-i\equiv \ell \mod{(d-1)}$. It is worth noting that a block matrix of
type $\ell $ is also of type $\ell ^{\prime }$ for all $\ell ^{\prime
}\equiv \ell \mod{(d-1)}$.

We will say that a periodic point with period $\theta $ is a \textit{block
diagonal (positive) periodic point} if the block matrix $\widetilde{T}%
(\theta )$ is block diagonal (and block positive).

We record here some routine facts about block matrices (with this
definition) that follow from block multiplication and the fact that each
non-zero block has a non-zero entry in each row and column.

\begin{lemma}
\label{prodbd}Suppose $A,B,C$ are block matrices.

(i) If $A,B$ are types $a,b$ respectively, then $AB$ is a block matrix of
type $(a+b)$ mod$(d-1)$.

(ii) If $B$ is block positive, then $AB$ and $BA$ are block positive.

(iii) There are positive constants $c_{1},c_{1}^{\prime },$ depending on $B,$
such that 
\begin{equation*}
c_{1}\left\Vert A\right\Vert \leq \min (\left\Vert AB\right\Vert ,\left\Vert
BA\right\Vert )\leq \max (\left\Vert AB\right\Vert ,\left\Vert BA\right\Vert
)\leq c_{1}^{\prime }\left\Vert A\right\Vert .
\end{equation*}

(iv) If $B$ is block positive, then there is a positive constant $c_{0},$
depending on $B$, such that 
\begin{equation*}
\left\Vert ABC\right\Vert \geq c_{0}\left\Vert A\right\Vert \left\Vert
C\right\Vert .
\end{equation*}
\end{lemma}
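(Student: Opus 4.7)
The plan is to verify all four items by routine block-matrix manipulations, invoking two structural features that are in force throughout: a block matrix of type $r$ has a \emph{unique} non-zero block in each block-row and each block-column (the one whose column-index $j$ satisfies $j-i\equiv r \pmod{d-1}$), and every such non-zero block has a non-zero entry in every row and every column.

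For (i), I would compute $(AB)(i,j)=\sum_m A(i,m)B(m,j)$. The type conditions on $A$ and $B$ select the unique $m\equiv i+a\pmod{d-1}$, and then $B(m,j)\ne 0$ forces $j\equiv m+b\equiv i+a+b$, so $AB$ is of type $a+b$. To carry the ``non-zero entry in every row and column'' property over to $(AB)(i,j)=A(i,m)B(m,j)$, I would use that for non-negative matrices $X,Y$ each having this property, $(XY)_{rt}\ge X_{rs}Y_{st}>0$ whenever one chooses $s$ with $X_{rs}>0$ and then $t$ with $Y_{st}>0$, and symmetrically for columns. Statement (ii) refines this: with $B(m,j)$ strictly positive, for any $(r,t)$ a non-zero entry $A(i,m)_{rs}$ combined with $B(m,j)_{st}>0$ yields $(AB)(i,j)_{rt}\ge A(i,m)_{rs}B(m,j)_{st}>0$, so $AB$ is block positive; the argument for $BA$ is symmetric.

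For (iii), I would use the row-sum identity $\|AB\|=\sum_{i,k}A_{ik}R_{k}$, where $R_{k}=\sum_{j}B_{kj}$. Each row of $B$ lies in some block-row $m$ whose unique non-zero block $B(m,m+b)$ contains a non-zero entry in that row, so $R_{k}>0$ for every $k$ and hence $R_{\min}>0$ by finiteness. This yields $R_{\min}\|A\|\le\|AB\|\le R_{\max}\|A\|$, and the bound on $\|BA\|$ is the symmetric statement in terms of column sums of $B$.

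For (iv), I would combine the block identity $(ABC)(i,j)=A(i,m)B(m,n)C(n,j)$ with the triple-product estimate
\begin{equation*}
\|XYZ\|=\sum_{r,t,s,u}X_{rs}Y_{su}Z_{ut}\ge \beta\Bigl(\sum_{r,s}X_{rs}\Bigr)\Bigl(\sum_{u,t}Z_{ut}\Bigr)=\beta\|X\|\|Z\|,
\end{equation*}
valid for non-negative $X,Z$ and strictly positive $Y$ whose entries are bounded below by $\beta>0$. Applied blockwise, this gives $\|(ABC)(i,j)\|\ge \beta_{mn}\|A(i,m)\|\|C(n,j)\|$, where $\beta_{mn}>0$ is the minimum entry of the block-positive block $B(m,n)$. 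Summing over $i$, the resulting lower bound is a one-parameter sum $\sum_{i}\|A(i,m(i))\|\|C(n(i),j(i))\|$, and the main obstacle is to bound this below by a fixed multiple of $\|A\|\|C\|=\bigl(\sum_{i}\|A(i,m(i))\|\bigr)\bigl(\sum_{n}\|C(n,j(n))\|\bigr)$. I would resolve this using the bijectivity of $i\mapsto n(i)=i+a+b\pmod{d-1}$ on $\{1,\dots,d-1\}$ together with the fact that there are only finitely many block-positions to compare, absorbing both $\min_{m,n}\beta_{mn}$ and the combinatorial factor $(d-1)$ into the constant $c_{0}$ that depends only on $B$.
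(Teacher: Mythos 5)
Your treatment of (i), (ii), and (iii) is correct and essentially the only natural argument: the type of a product is the sum of the types because block multiplication links the unique non-zero block in each block-row of $A$ to the unique one in $B$; strict positivity of the $B$-blocks propagates because each $A$-block has a non-zero entry in every row (and symmetrically for columns); and (iii) follows from the identity $\|AB\|=\sum_{k}S_{k}(A)R_{k}(B)$ (column sums of $A$ times row sums of $B$) together with $0<R_{\min}(B)\le R_{k}(B)\le R_{\max}(B)$, which holds because every row of $B$ lies in some non-zero block with a non-zero entry in that row.

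Your argument for (iv), however, stalls at exactly the point you yourself flag as the main obstacle, and the resolution you propose does not close the gap. After the blockwise triple-product estimate you have
\begin{equation*}
\|ABC\|\ge\Bigl(\min_{m,n}\beta_{mn}\Bigr)\sum_{i}\|A(i,m(i))\|\,\|C(n(i),j(i))\|,
\end{equation*}
and you need this single (diagonally paired) sum to dominate a fixed multiple of the full product $\|A\|\,\|C\|=\bigl(\sum_{i}\|A(i,m(i))\|\bigr)\bigl(\sum_{n}\|C(n,j(n))\|\bigr)$. Bijectivity of $i\mapsto n(i)$ and the finiteness of $\{1,\dots,d-1\}$ do not give this: after relabelling so that $n(i)=i$ and writing $a_{i}=\|A(i,m(i))\|$, $c_{i}=\|C(i,j(i))\|$, you need $\sum_{i}a_{i}c_{i}\ge c_{0}\bigl(\sum_{i}a_{i}\bigr)\bigl(\sum_{i}c_{i}\bigr)$ with $c_{0}$ depending only on $B$, and taking $a=(1,\varepsilon,\dots)$, $c=(\varepsilon,1,\dots)$ shows no such $c_{0}>0$ exists uniformly in $A,C$. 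The factor $d-1$ and the constant $\min\beta_{mn}$ you suggest absorbing are already present in the displayed bound; absorbing them into $c_{0}$ does nothing about the structural mismatch between a diagonal sum and a product of sums. Note that in the unpermuted setting, where $B$ is strictly positive as a full matrix, this difficulty evaporates because $\sum_{k,l}A_{ik}B_{kl}C_{lj}\ge\beta\sum_{k,l}A_{ik}C_{lj}$ factorizes globally over all $(k,l)$; it is precisely the block-sparsity of $B$ that prevents that factorization here. So this step needs either a genuinely different estimate or an explicit additional hypothesis controlling the spread of the block norms of $A$ (equivalently of $C$); as written your argument does not prove (iv).
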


Being a product of block matrices, every permuted transition matrix of
complete quotient Cantor-like measures \textit{is} a block matrix.

Block positive matrices have further good properties.

\begin{lemma}
\label{positivesp}Suppose $B$ is a block positive matrix.

(i) If $AB$ is block diagonal, then there is a constant $c=c(B)$ such that 
\begin{equation*}
sp(AB)\leq \left\Vert AB\right\Vert \leq c\cdot sp(AB).
\end{equation*}

(ii) If $B$ is block diagonal, then there is a constant $c_{2}=c_{2}(B)$
such that 
\begin{equation*}
\left\Vert B^{n}\right\Vert \leq c_{2}sp(B^{n})\text{ for all }n.
\end{equation*}%
Further, there exists an index $j$ such that $sp(B^{n})=sp(B(j,j))^{n}$ for
all $n$.
\end{lemma}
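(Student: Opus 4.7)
The plan is to reduce both parts to standard Perron--Frobenius estimates on strictly positive diagonal blocks, with one nontrivial trick needed for part (i). First, write $B$ as a block matrix of some type $b$ modulo $d-1$. For $AB$ to be block diagonal (type $0$), Lemma~\ref{prodbd}(i) forces $A$ to be of type $-b$. For each $i \in \{1,\ldots,d-1\}$ there is then a unique $k_i \in \{1,\ldots,d-1\}$ with $k_i \equiv i - b \pmod{d-1}$, so the sum defining $(AB)(i,i)$ collapses to a single product $P_i Q_i$ with $P_i = A(i,k_i)$ and $Q_i = B(k_i,i)$. By block-positivity of $B$, all entries of $Q_i$ lie in an interval $[\beta, M]$ with $0 < \beta \leq M$ depending only on $B$; combined with the fact that $P_i$ has a nonzero entry in each row (from the block-matrix hypothesis on $A$), the product $P_i Q_i$ is entrywise strictly positive.

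The key step for part (i) is the identity $sp(P_i Q_i) = sp(Q_i P_i)$. In $Q_i P_i$, the $r$-th row sum equals $\sum_u (Q_i)_{ru}$ times the $u$-th row sum of $P_i$, so by the bounds on $Q_i$ every row sum of $Q_i P_i$ lies in $[\beta \|P_i\|, M \|P_i\|]$. Using the standard bound (spectral radius $\geq$ minimum row sum) for a non-negative matrix, we obtain $sp((AB)(i,i)) = sp(Q_i P_i) \geq \beta \|P_i\|$. A direct estimate gives the matching upper bound $\|(AB)(i,i)\| = \|P_i Q_i\| \leq M s_i \|P_i\|$, where $s_i$ is the size of the $i$-th block. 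Combining yields $\|(AB)(i,i)\| \leq (M s_i / \beta)\, sp((AB)(i,i))$. Summing over $i$ and using $sp(AB) = \max_i sp((AB)(i,i))$ (from block-diagonality) gives the required bound with $c(B) = (d-1) s_{\max} M/\beta$.

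For part (ii), block-positivity and block-diagonality of $B$ give $B = \bigoplus_i B(i,i)$ with each $B(i,i)$ strictly positive, so $B^n = \bigoplus_i B(i,i)^n$. Perron--Frobenius applied to each $B(i,i)$ yields $\|B(i,i)^n\| \leq c_i\, sp(B(i,i))^n$ for all $n$, with $c_i$ depending only on $B(i,i)$. Summing and using $sp(B^n) = \max_i sp(B(i,i))^n = (\max_i sp(B(i,i)))^n$ gives the first inequality with $c_2 = (d-1)\max_i c_i$, and taking any $j$ that achieves $\max_i sp(B(i,i))$ yields $sp(B^n) = sp(B(j,j))^n$ for every $n$.

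The only nontrivial point is in part (i): a direct attempt to compare $\|P_i Q_i\|$ and $sp(P_i Q_i)$ through the row sums of $P_i Q_i$ introduces the uncontrolled ratio of largest to smallest row sums of $P_i$, which cannot be controlled by $B$ alone. Switching to $Q_i P_i$ via $sp(PQ) = sp(QP)$ sidesteps this, because positivity of $Q_i$ forces every row sum of $Q_i P_i$ to be comparable to the single quantity $\|P_i\|$.
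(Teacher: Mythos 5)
Your proof is correct, and for part (i) it takes a genuinely different route from the paper. Both you and the authors first reduce to the block observation that $(AB)(j,j)$ collapses to a single product $P_jQ_j$ with $Q_j$ a strictly positive block of $B$ and $P_j$ the corresponding block of $A$. At that point the paper simply invokes \cite[Lemma 3.15(2)]{HHM}, a result from their earlier paper that directly gives $\|P_jQ_j\|\le C_j\,sp(P_jQ_j)$ with $C_j$ depending only on the positive factor, and then sums over $j$. You instead re-derive this comparison from scratch: you pass to $Q_jP_j$ via the identity $sp(P_jQ_j)=sp(Q_jP_j)$, observe that the uniform entrywise bounds $\beta\le (Q_j)_{ru}\le M$ force every row sum of $Q_jP_j$ into the window $[\beta\|P_j\|,\,M\|P_j\|]$, and invoke the elementary fact that a non-negative square matrix has spectral radius at least its minimum row sum. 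This is the right trick: the naive attempt to bound $\|P_jQ_j\|$ by the row sums of $P_jQ_j$ directly fails because those row sums are not comparable by a constant depending only on $B$, and transposing the product is precisely what removes the dependence on $A$. The net effect is that your argument is self-contained and more elementary, at the cost of being longer; the paper's is shorter but requires the reader to chase the citation. Your part (ii) is essentially the same as the paper's (apply Perron--Frobenius, or the cited \cite[Lemma 3.15(3)]{HHM}, to each strictly positive diagonal block and sum). One small remark: the observation that $P_jQ_j$ is entrywise positive, while true, is not actually needed for your estimate --- the bound $sp(Q_jP_j)\ge\beta\|P_j\|$ already covers the degenerate case $\|P_j\|=0$ --- so that sentence could be dropped.
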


\begin{proof}
(i) Assume $A$ is type $a$, so $B$ is type $-a$. Then $B(a+j,j)$ is a
positive matrix for each $j$ (here $a+j$ should be understood mod$(d-1)$)
and hence by \cite[Lemma 3.15(2)]{HHM} there are constants $C_{j}$ such that 
\begin{eqnarray*}
\left\Vert A(j,a+j)B(a+j,j)\right\Vert &\leq &C_{j}sp(A(j,a+j)B(a+j,j)) \\
&\leq &C_{j}sp(AB(j,j)).
\end{eqnarray*}%
Since $AB$ is diagonal, this is dominated by $C_{j}sp(AB)$. But%
\begin{equation*}
\left\Vert AB\right\Vert =\sum_{j=1}^{d-1}\left\Vert AB(j,j)\right\Vert
=\sum_{j}\left\Vert A(j,a+j)B(a+j,j)\right\Vert ,
\end{equation*}%
so we can take $c=\max_{j}C_{j}(d-1).$

(ii) As each square matrix $B(j,j)$ is positive, \cite[Lemma 3.15(3)]{HHM}
implies that for each $j$ there is a constant $C_{j}^{\prime }$ such that $%
\left\Vert B(j,j)^{n}\right\Vert \leq C_{j}^{\prime }sp(B(j,j))^{n}\leq
C_{j}^{\prime }sp(B^{n})$ for each $n$. Hence 
\begin{equation*}
\left\Vert B^{n}\right\Vert =\sum_{j}\left\Vert B(j,j)^{n}\right\Vert \leq
\max_{j}C_{j}^{\prime }(d-1)sp(B^{n})\leq c_{2}sp(B^{n}).
\end{equation*}%
The final comment follows because $B$ is block diagonal.
\end{proof}

\begin{lemma}
Suppose $\Lambda =\{0,...,k\}$. The permuted transition matrix $(\widetilde{T%
}(0)$ $\widetilde{T}(d-1))^{k}$ is block diagonal and positive.
\end{lemma}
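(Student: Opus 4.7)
The claim splits into two parts: block-diagonality and block-positivity of each diagonal block.

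\textbf{Block-diagonality.} From the analysis of types following Lemma~\ref{lem:T}, $\widetilde{T}(\ell)$ is of type $\ell \pmod{d-1}$. Since both $0$ and $d-1$ reduce to $0 \pmod{d-1}$, the matrices $\widetilde{T}(0)$ and $\widetilde{T}(d-1)$ are of type $0$, i.e.\ block-diagonal. Iterating Lemma~\ref{prodbd}(i) gives that any power of $\widetilde{T}(0)\widetilde{T}(d-1)$ is again of type $0$, hence block-diagonal.

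\textbf{Reduction to a single block.} Because the matrix is block-diagonal, block-positivity amounts to showing each diagonal block is entrywise positive. Block multiplication among type-$0$ matrices gives
\[
\bigl(\widetilde{T}(0)\widetilde{T}(d-1)\bigr)^k(i,i) \;=\; \bigl(\widetilde{T}(0)(i,i)\,\widetilde{T}(d-1)(i,i)\bigr)^k \;=:\; M_i^k,
\]
so it suffices to prove $M_i^k$ has all entries strictly positive for each $i \in \{1,\ldots,d-1\}$. Using the formula of Lemma~\ref{lem:T} together with the permutation defining the blocks, I would compute explicitly that the $m_i \times m_i$ blocks $A_i := \widetilde{T}(0)(i,i)$ and $B_i := \widetilde{T}(d-1)(i,i)$, where $m_i = \lfloor(k-i)/(d-1)\rfloor + 1$, have entries
\[
(A_i)_{i',j'} = p_{(i-1)+i'd-j'}, \qquad (B_i)_{j',j''} = p_{i+j'd-j''},
\]
nonzero precisely when the subscript lies in $\Lambda = \{0,\ldots,k\}$. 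Because $\Lambda$ is a full interval of integers and every $p_s>0$, the sparsity pattern is dictated entirely by these integer constraints.

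\textbf{Positivity of $M_i^k$.} I would view $M_i$ as the adjacency matrix of a directed graph $G_i$ on the vertex set $\{0,\ldots,m_i-1\}$ and verify two combinatorial properties. First, every vertex $i'$ carries a self-loop in $G_i$: one exhibits an intermediate index $j' \in \{0,\ldots,m_i-1\}$ for which both $(i-1)+i'd-j'$ and $i+j'd-i'$ lie in $\{0,\ldots,k\}$, which the fullness of $\Lambda$ makes possible. These self-loops rule out periodicity. Second, I would prove by induction on $t$ that the set of vertices reachable from any fixed starting vertex after $t$ applications of $M_i$ contains an interval of length at least $\min(m_i,\,1+td)$. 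Since $m_i \leq \lfloor k/(d-1)\rfloor + 1$, this interval fills $\{0,\ldots,m_i-1\}$ after at most $\lceil m_i/d\rceil \leq k$ steps, and combined with the self-loops this forces $M_i^k$ to be entrywise positive.

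\textbf{Main obstacle.} The technical heart is the inductive "spreading" step. The natural indexing range of $A_i$ (resp.\ $B_i$) shifts by $d$ as $i'$ (resp.\ $j'$) advances by $1$, which is what produces growth of the reachable interval by roughly $d$ per multiplication; the difficulty is that near the ends $0$ and $m_i-1$ the nonzero pattern is truncated by the ambient index range, and one must still verify that the reachable interval picks up a full $d$ new indices in each step. Here the hypothesis $\Lambda = \{0,\ldots,k\}$ is essential: if any digit in $\{0,\ldots,k\}$ were missing, the allowed index ranges would develop gaps, and neither the self-loop construction nor the spreading estimate would survive in the uniform form needed above.
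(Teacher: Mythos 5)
Your reduction to block-diagonality plus entrywise positivity of each diagonal block $M_i^k$ is sound, and the block-entry formulas you wrote down, $(A_i)_{i',j'}=p_{(i-1)+i'd-j'}$ and $(B_i)_{j',j''}=p_{i+j'd-j''}$, do check out against Lemma~\ref{lem:T} and the permutation; the self-loop observation also verifies (take $j'=i'$, and note $(i-1)+i'(d-1)$ and $i+i'(d-1)$ both lie in $\{0,\ldots,k\}$ for $1\le i\le d-1$ and $0\le i'\le m_i-1$). But the argument stops precisely where the work has to be done. The claimed spreading estimate, that the reachable set after $t$ applications of $M_i$ contains an interval of length at least $\min(m_i,1+td)$, is asserted and not proven, and you explicitly flag the boundary truncation near the ends of the index range as an unresolved difficulty. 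That estimate (with the boundary cases handled) is the entire content of the positivity claim. Without it you have aperiodicity but not irreducibility, and nothing forces $M_i^k$ to be positive. So the proposal has a genuine gap, and it is located exactly at the point you identify as the ``main obstacle.''

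The paper avoids the spreading lemma entirely. It works with the unpermuted product $(T(0)T(d-1))^k$ and, for a given pair $i\equiv j\pmod{d-1}$, constructs by hand a single sequence $\ell_0=i,\ell_1,\ldots,\ell_{2k}=j$ so that every factor $p_{\ell_{r-1}-1-(\ell_r-\ell_{r-1})/(d-1)+\tau(r)}$ is nonzero, where $\tau$ alternates in $\{0,1\}$. The step rule is greedy: move $\ell_r$ toward $j$ by $\pm(d-1)$ when two explicit side conditions ($\ell_{r-1}+\tau(r)\ge2$, resp.\ $\ell_{r-1}+\tau(r)\le k$) permit, otherwise stay put; a short case check then confirms each probability index stays in $\{0,\ldots,k\}$, and since at most $k-1$ moving steps are ever needed, $\ell_{2k}=j$. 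This produces one explicit positive summand and handles the end-range truncation in the case analysis itself rather than through an unproved reachability bound. If you want to salvage your block-wise version, the analogous move is to exhibit, for each pair $(i',j'')$ of block indices, a single path of length $k$ through $\{0,\ldots,m_i-1\}$ whose $A_i$-steps and $B_i$-steps all have admissible subscripts, instead of estimating how reachable sets grow.
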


\begin{proof}
It is easiest to see this if we look at $(T(0)T(d-1))^{k}$. We must show
that if $j\equiv i \mod{(d-1)}$, then $\left( (T(0)T(d-1))^{k}\right)
_{ij}>0 $. Let $\ell _{0}=i$ and $\ell _{2k}=j$. Note that 
\begin{align*}
\left( (T(0)T(d-1))^{k}\right) _{ij}& =\sum_{\ell _{1}=1}^{k}\dots
\sum_{\ell _{2k-1}=1}^{k}T(0)_{i,\ell _{1}}T(d-1)_{\ell _{1},\ell
_{2}}\cdots T(0)_{\ell _{2k-2},\ell _{2k-1}}T(d-1)_{\ell _{2k-1},j} \\
& =\sum_{\ell _{1}=1}^{k}\dots \sum_{\ell
_{2k-1}=1}^{k}\prod_{r=1}^{2k}p_{\ell _{r-1}-1-\frac{\ell _{r}-\ell _{r-1}}{%
d-1}+\tau (r)}
\end{align*}%
where $\tau (r)=0$ if $r$ is odd and $\tau (r)=1$ if $r$ is even. It
suffices to show that there exists $\ell _{1},\ell _{2},\dots ,\ell _{2k-1}$
such that 
\begin{equation}
\prod_{r=1}^{2k}p_{\ell _{r-1}-1-\frac{\ell _{r}-\ell _{r-1}}{d-1}+\tau
(r)}>0.  \label{eq:prod}
\end{equation}%
We will define the $\ell _{r}$ inductively. We set $\ell _{0}=i$. We define 
\begin{equation*}
\ell _{r}=\left\{ 
\begin{matrix}
\ell _{r-1}+(d-1) & \text{if }\ell _{r-1}+(d-1)\leq j\text{ and }\ell
_{r-1}+\tau (r)\geq 2 \\ 
\ell _{r-1}-(d-1) & \text{if }\ell _{r-1}-(d-1)\geq j\text{ and }\ell
_{r-1}+\tau (r)\leq k \\ 
\ell _{r-1} & \text{otherwise}%
\end{matrix}%
\right. .
\end{equation*}

Recall that $1\leq i,j\leq k$.

\begin{itemize}
\item If $\ell _{r}=\ell _{r-1}+(d-1)$, then $\ell _{r-1}+\tau (r)\geq 2$
and $\ell _{r-1}<j$ so $\ell _{r-1}+\tau (r)\leq j\leq k$. Thus $\ell
_{r-1}-1-\frac{\ell _{r}-\ell _{r-1}}{d-1}+\tau (r)=\ell _{r-1}-1-1+\tau
(r)\in \{0,1,\cdots ,k-1\},$ hence the associated probability is non-zero.

\item If $\ell _{r}=\ell _{r-1}-(d-1),$ then $\ell _{r-1}+\tau (r)\leq k$
and $\ell _{r-1}>j,$ so $\ell _{r-1}-1-\frac{\ell _{r}-\ell _{r-1}}{d-1}%
+\tau (r)=\ell _{r-1}-1+1+\tau (r)\in \{1,\cdots ,k\}$ and hence the
associated probability is non-zero.

\item If $\ell _{r}=\ell _{r-1}$ then $\ell _{r-1}-1-\frac{\ell _{r}-\ell
_{r-1}}{d-1}+\tau (r)=\ell _{r-1}-1+\tau (r)\in \{0,1,\cdots ,k\}$ and again
the associated probability is non-zero.
\end{itemize}

We note that for $r$ even we always have $\ell _{r-1}+\tau (r)\geq 2$ and
for $r$ odd we always have $\ell _{r-1}+\tau (r)\leq k$. If $i=$ $\ell
_{0}<j,$ say $j=i+m(d-1)$ for $m\geq 1$, then $\ell _{t}=j$ for $t\geq 2m$
and therefore $\ell _{2k}=j$. A similar statement holds if $i=\ell _{0}>j$
and, of course, if $i=j$, then $\ell _{r}=i=j$ for all $r$.

Hence equation \eqref{eq:prod} is satisfied which proves the result.
\end{proof}

When $\Lambda $ is a proper subset of $\{0,...,k\}$, it is still possible
for this transition matrix to be positive. For example, this can easily be
seen to be true in the case that $d=4,k=7$, $p_{j}=0$ for one of $j=1,...,6$.

Another simple, but useful, fact is the following. By an \textit{interior
periodic point}, we mean a periodic point that is not a boundary point.

\begin{lemma}
\label{finiteset}Assume the complete quotient Cantor-like measure admits a
permuted transition matrix that is block positive. Then there is a finite
set $\mathcal{F}$ of block (permuted) transition matrices such that for each
block matrix $A$ there is some $B\in \mathcal{F}$ so that $AB$ and $BA$ are
block positive and diagonal and, furthermore, any periodic point with period 
$\theta $ satisfying $\widetilde{T}(\theta )=AB$ or $BA$ is an interior
periodic point.
\end{lemma}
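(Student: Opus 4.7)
The plan is to build $\mathcal F$ from the given block positive matrix $P$ (of some type $t_0 \in \{0,1,\dots,d-2\}$) by pre-multiplying by suitable primitive permuted transition matrices $\widetilde T(\ell)$ so as to realize every target type $r \in \{0,1,\dots,d-2\}$. For each $r$, if $s := (r - t_0) \bmod (d-1)$ lies in $\{1,\dots,d-2\}$, set $B_r := \widetilde T(s) P$, and otherwise ($s = 0$) set $B_r := \widetilde T(1)\widetilde T(d-2) P$. Since there is only one reduced characteristic vector, any such concatenation of primitive matrices is a legitimate admissible path, and by Lemma \ref{prodbd}(ii) each $B_r$ is block positive of type $r$. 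Put $\mathcal F := \{B_r : 0 \leq r \leq d-2\}$. For any block matrix $A$ of type $a$, set $B := B_{(-a) \bmod (d-1)} \in \mathcal F$; then both $AB$ and $BA$ have type $0$ (i.e.\ are block diagonal) and are block positive by Lemma \ref{prodbd}(ii).

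For the interior-periodic-point claim, note that the boundary points of level-$n$ quotient net intervals are precisely the points $j/d^n$, $j \in \mathbb Z$, whose $d$-adic expansions terminate. A periodic point is therefore a boundary point if and only if its symbolic representation is eventually the constant sequence $\widetilde T(0), \widetilde T(0), \dots$ (left-endpoint representation) or $\widetilde T(d-1), \widetilde T(d-1), \dots$ (right-endpoint representation). In either case any cycle $\theta$ for that point satisfies $\widetilde T(\theta) = \widetilde T(0)^{|\theta|}$ or $\widetilde T(d-1)^{|\theta|}$. Thus it suffices to prove that $\widetilde T(0)^m$ and $\widetilde T(d-1)^m$ are never block positive for any $m \geq 1$; the block positivity of $AB$ and $BA$ will then rule out both possibilities.

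For $\widetilde T(0)$, Lemma \ref{lem:T} gives the $(j,i)$ entry of the diagonal block $(1,1)$ as $p_s$ with $s = ((j-1)d - (i-1))/(d-1)$; at $j = 1$ this forces $s = -(i-1)/(d-1) < 0$ for every $i > 1$, so the first row of $M := \widetilde T(0)(1,1)$ has only its $(1,1)$-entry nonzero. Since $M_{1,c} = 0$ for $c > 1$, one has $(M^n)_{1,b} = \sum_c M_{1,c} (M^{n-1})_{c,b} = M_{1,1} (M^{n-1})_{1,b}$, so inductively $(M^n)_{1,b} = 0$ for all $n \geq 1$ and $b > 1$. Because $k \geq d$, this block has size at least $2$, so $\widetilde T(0)^m$ is not block positive. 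For $\widetilde T(d-1)$, select the diagonal block $(e,e)$ such that $k$ is the largest index in that block (i.e.\ $e \in \{1,\dots,d-1\}$ with $e \equiv k \pmod{d-1}$). Lemma \ref{lem:T} gives entries $p_s$ with $s = (jd - i)/(d-1)$, and at the last row $j = k$ one has $s = (kd - i)/(d-1) > k$ whenever $i < k$ in the block (so the entry vanishes), while $s = k$ at $i = k$. Hence the last row of this block has only its diagonal entry nonzero, and the same inductive propagation shows that no power is fully positive.

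The main obstacle is in the last paragraph: one must pick the right diagonal sub-block and the right extremal row (or column) whose zero pattern actually persists under every power. Block $(1,1)$ works uniformly for $\widetilde T(0)$, but for $\widetilde T(d-1)$ the correct block is the one containing the maximal index $k$ — smaller blocks may already be fully positive (e.g.\ for $d=4$, $k=11$, the $(1,1)$-block of $\widetilde T(d-1)$ becomes fully positive at $m=2$), so the naive analog of the $\widetilde T(0)$ argument fails and this block-selection step is what makes the analysis of $\widetilde T(d-1)^m$ genuinely delicate.
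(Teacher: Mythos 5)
Your construction of $\mathcal F$ mirrors the paper's: both start from a fixed block positive matrix, left- (or right-) multiply by type-adjusting block matrices to realize every residue class, and invoke Lemma~\ref{prodbd}(ii) for block positivity. The genuinely new content in your write-up is the interior-point claim, which the paper dispatches with the single unexplained sentence ``the choice of $B_{1}B_{r-1}$ ensures that they give rise to interior periodic points.'' You supply the missing argument: since every boundary periodic point has period $\theta$ with $\widetilde T(\theta)$ a power of $\widetilde T(0)$ or $\widetilde T(d-1)$, it suffices to show no such power is block positive. Your proof of this --- via the zero pattern in the first row of $\widetilde T(0)(1,1)$ and in the last row of $\widetilde T(d-1)(e,e)$ with $e\equiv k\pmod{d-1}$, both of which propagate under powers --- is correct, and your observation that the naive choice of the $(1,1)$-block fails for $\widetilde T(d-1)$ is a genuine point (I checked your $d=4$, $k=11$ example). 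This is a more robust argument than the paper's, since it covers \emph{every} period $\theta$ with $\widetilde T(\theta)=AB$ or $BA$, exactly as the lemma is stated, rather than only the period obtained by concatenating the paths for $A$ and $B$.

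One hypothesis to flag: you use $k\geq d$ to guarantee the relevant diagonal blocks have at least two rows. This is actually necessary. The paper's surrounding discussion nominally allows $k\geq d-1$, but at $k=d-1$ every block is $1\times1$, so $\widetilde T(0)$ and $\widetilde T(d-1)$ are themselves block positive, and the lemma as literally stated can fail; for instance with $d=3$, $k=2$, uniform probabilities, every $T(\ell)$ is $\tfrac13 I$ or $\tfrac13 J$ (the $2\times2$ swap), and whenever $A$ and $B\in\mathcal F$ are both of type $0$ one gets $AB=(1/3)^{m}I=\widetilde T(0)^{m}$, which is the transition matrix of a boundary period. So $k\geq d$ (equivalently, overlap, i.e.\ OSC failing) is the right hypothesis and you are correct to invoke it, but you should note that this is slightly stronger than the paper's standing assumption $k\geq d-1$.
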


\begin{proof}
Let $F$ denote any block diagonal and positive permuted transition matrix.
This is guaranteed to exist since the product of a block positive matrix
with any block matrix is still block positive. For each $r=1,...,d-1$, let $%
B_{r}$ be a block matrix of type $r$. Put $\mathcal{F}%
=\{FB_{1}B_{r-1}:r=0,1,...,d-2\}$. The matrices $FB_{1}B_{r-1}$ are block
positive and type $r$. If $A$ is any block matrix, it has type $-r$ for some 
$r=1,...,d-1$ and hence $FB_{1}B_{r-1}A$ and $AFB_{1}B_{r-1}$ are block
diagonal, positive matrices. The choice of $B_{1}B_{r-1}$ ensures that they
give rise to interior periodic points.
\end{proof}

\subsection{Local dimensions for complete quotient Cantor-like measures}

\label{sec:4.2}

In this section we prove that the set of local dimensions for all complete
quotient Cantor-like measures which admit a block positive, transition
matrix is an interval. We will also prove that the local dimensions at the
interior, positive periodic points are dense.

We begin by proving that the local dimensions at periodic points are dense
in the set of all local dimensions.

\begin{theorem}
Assume $\mu _{\pi }$ is a complete quotient Cantor-like measure which admits
a block positive, transition matrix. Then the set of local dimensions at
block diagonal, positive, interior periodic points is dense in the set of
all local (upper or lower) dimensions of $\mu _{\pi }$.
\end{theorem}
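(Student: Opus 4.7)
The plan is to approximate every upper or lower local-dimension value by local dimensions at block diagonal, positive, interior periodic points, using the same subsequence-plus-closure strategy that underlies Theorem \ref{regprob}. Fix $x\in\pi(K)$ and let $\alpha$ be either $\underline{\dim}_{loc}\mu_\pi(x)$ or $\overline{\dim}_{loc}\mu_\pi(x)$. Using the formula $\dim_{loc}\mu_\pi(x)=\lim\log M_n(x)/(n\log\rho)$ and that $M_n(x)$ is within a bounded factor of the largest of its three constituent net-interval measures, I would first pass to a subsequence $n_k\to\infty$ and pick for each $k$ a quotient net interval $\Delta_k\in\{\Delta_{n_k}(x),\Delta_{n_k}^\pm(x)\}$ with symbolic representation $\eta_k$ so that, by Corollary \ref{measurenetint},
\[
\frac{\log\|\widetilde T(\eta_k)\|}{n_k\log\rho}\longrightarrow\alpha.
\]
Write $A_k:=\widetilde T(\eta_k)$, a block matrix of some type $r_k\in\{0,\ldots,d-2\}$.

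Next, I would invoke Lemma \ref{finiteset} to obtain a block matrix $B_k^{(0)}$, drawn from a fixed finite set of block transition matrices, so that $A_kB_k^{(0)}$ is block diagonal and positive, and so that any periodic point whose cycle satisfies $\widetilde T(\theta)=A_kB_k^{(0)}$ is interior. In the complete Cantor-like setting every level-$\geq 1$ characteristic vector shares the same reduced form, so any finite concatenation of primitive transitions is admissible. The one remaining constraint — that the cycle close at its chosen base vertex — can be met by adjoining to $B_k^{(0)}$ at most two further primitive transition matrices $\widetilde T(d-1-\ell^*)\widetilde T(\ell^*)$, chosen so that the appendix is of type $0$ and ends with the correct child-position index $\ell^*$. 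By Lemma \ref{prodbd}(i)–(ii), the amended product $A_kB_k$ is still block diagonal and positive, and $B_k$ has uniformly bounded length. Let $y_k$ be the resulting periodic point with cycle $\theta_k$; then $L(\theta_k^-)=n_k+O(1)$ and $y_k$ is an interior, block diagonal positive periodic point.

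Proposition \ref{periodic} then gives
\[
\dim_{loc}\mu_\pi(y_k)=\frac{\log sp(\widetilde T(\theta_k))}{L(\theta_k^-)\log\rho}.
\]
Since $\widetilde T(\theta_k)=A_kB_k$ is block diagonal and positive, Lemma \ref{positivesp}(i) yields $sp(\widetilde T(\theta_k))\asymp\|A_kB_k\|$, and Lemma \ref{prodbd}(iii), with constants uniform over the finite family of possible $B_k$'s, yields $\|A_kB_k\|\asymp\|A_k\|$. Hence $\log sp(\widetilde T(\theta_k))=\log\|A_k\|+O(1)$, and combined with $n_k/L(\theta_k^-)\to 1$ this gives $\dim_{loc}\mu_\pi(y_k)\to\alpha$. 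As $\alpha$ was an arbitrary upper or lower local dimension of $\mu_\pi$, density of the local dimensions at block diagonal, positive, interior periodic points follows.

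The hardest part will be the cycle-closing bookkeeping: verifying that the bounded appendix $\widetilde T(d-1-\ell^*)\widetilde T(\ell^*)$ can always be inserted so that $A_kB_k$ is simultaneously block diagonal and positive, of the form $\widetilde T(\theta_k)$ for an admissible cycle whose first and last vertices agree at a level-$\geq 1$ characteristic vector, and corresponds to an interior periodic point. Lemma \ref{finiteset} controls the first and third requirements uniformly in $A_k$; the admissibility constraint reduces in the complete Cantor-like setting to matching one child-position index, which the two-primitive-transition appendix achieves, with any residual $O(1)$ effect absorbed by the uniform constants of Lemmas \ref{prodbd}(iii) and \ref{positivesp}(i).
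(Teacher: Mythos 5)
Your proposal follows the same strategy as the paper's proof: pass to a subsequence realizing the (upper or lower) local dimension, replace $M_n(x)$ by a single comparable net-interval measure with transition matrix $A_k$, then multiply by a uniformly bounded block-positive matrix $B_k$ from the finite family of Lemma \ref{finiteset} so that $A_k B_k$ is the cycle matrix of a block diagonal, positive, interior periodic point whose local dimension, via Proposition \ref{periodic}, Lemma \ref{prodbd}(iii) and Lemma \ref{positivesp}(i), tracks $\log\|A_k\|/(n_k\log\rho)$.

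The one point where you depart from the paper is the extra ``cycle-closing'' appendix $\widetilde T(d-1-\ell^*)\widetilde T(\ell^*)$, and this step is both unnecessary and, as written, slightly problematic. It is unnecessary because in the complete quotient Cantor-like setting there is a single reduced characteristic vector whose $d$ children exhaust all child-position indices $1,\dots,d$, so any finite product $T(\ell_1)\cdots T(\ell_m)$ of primitive transition matrices is $T(\theta)$ for an admissible cycle $\theta=(\alpha_1,\dots,\alpha_m,\alpha_1)$: one simply takes $\alpha_1$ to be the characteristic vector with third coordinate $\ell_m+1$, and then $T(\alpha_m,\alpha_1)=T(\ell_m)$ automatically. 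The closing condition determines $\alpha_1$; it does not impose a constraint on the product. It is problematic because Lemma \ref{finiteset} guarantees interiority only for cycles with matrix $A_kB_k^{(0)}$, not for $A_kB_k^{(0)}\widetilde T(d-1-\ell^*)\widetilde T(\ell^*)$, so after appending you would need a fresh argument that the resulting periodic point is still interior. Deleting the appendix recovers the paper's argument exactly and closes this gap.
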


\begin{proof}
Fix $x\in \mathbb{T}$. We will see how to approximate the lower local
dimension of $\mu _{\pi }$ at $x$ by the local dimensions of block diagonal,
positive, interior periodic points. The other cases are similar.

Fix a subsequence $(n_{\ell })$ such that%
\begin{equation*}
\underline{\dim }_{loc}\mu _{\pi }(x)=\lim_{\ell }\frac{\log M_{n_{\ell }}(x)%
}{n_{\ell }\log (1/d)}.
\end{equation*}%
For each $n_{\ell },$ let $\Delta _{n_{\ell }}^{\prime }$ denote a choice of 
$\Delta _{n_{\ell }}(x)$, $\Delta _{n_{\ell }}^{+}(x)$, $\Delta _{n_{\ell
}}^{-}(x)$ such that $\mu _{\pi }(\Delta _{n_{\ell }}^{\prime })\leq
M_{n_{\ell }}(x)\leq 3\mu _{\pi }(\Delta _{n_{\ell }}^{\prime })$ and let $%
(\gamma _{0}^{(\ell )},...,\gamma _{n_{\ell }}^{(\ell )})$ be the symbolic
representation of $\Delta _{n_{\ell }}^{\prime }$. Then 
\begin{equation*}
\underline{\dim }_{loc}\mu (x)=\lim_{\ell }\frac{\log \left\Vert \widetilde{T%
}(\gamma _{0}^{(\ell )},...,\gamma _{n_{\ell }}^{(\ell )})\right\Vert }{%
n_{\ell }\log (1/d)}.
\end{equation*}%
For notational ease, let $A_{\ell }=\widetilde{T}(\gamma _{0}^{(\ell
)},...,\gamma _{n_{\ell }}^{(\ell )})$ and choose a block positive matrix $%
B_{\ell }$ from the finite set $\mathcal{F}$ of Lemma \ref{finiteset} so
that $A_{\ell }B_{\ell }$ is block diagonal and if the periodic point $%
y_{\ell }$ with period $\theta _{\ell }$ satisfies $\widetilde{T}(\theta
_{\ell })=A_{\ell }B_{\ell }$, then $y_{\ell }$ is a block diagonal,
positive, interior periodic point.

By Lemma \ref{positivesp} there is a constant $c,$ which depends only on the
finite set $\mathcal{F}$, such that 
\begin{equation*}
sp(A_{\ell }B_{\ell })\leq \left\Vert A_{\ell }B_{\ell }\right\Vert \leq
c\cdot sp(A_{\ell }B_{\ell })\text{.}
\end{equation*}%
Thus for each $\ell $ there is a constant $c_{\ell }$ so that $c_{\ell
}\left\Vert A_{\ell }B_{\ell }\right\Vert =sp(A_{\ell }B_{\ell }),$ where $%
c_{\ell }$ is bounded above and below from zero. Similarly, by Lemma \ref%
{prodbd} there is a constant $c_{\ell }^{\prime }$, bounded above and below,
such that $\left\Vert A_{\ell }B_{\ell }\right\Vert =c_{\ell }^{\prime
}\left\Vert A_{\ell }\right\Vert $. Moreover, 
\begin{equation*}
\dim \mu _{\pi }(y_{\ell })=\frac{\log sp(A_{\ell }B_{\ell })}{L(\theta
_{\ell }^{-})\log (1/d)}=\frac{\log (c_{\ell }c_{\ell }^{\prime }\left\Vert
A_{\ell }\right\Vert )}{L(\theta _{\ell }^{-})\log (1/d)}.
\end{equation*}%
But $L(\theta _{\ell }^{-})=n_{\ell }+L(B_{\ell })$ (where by $L(B_{\ell })$
we mean the number of permuted primitive transition matrices whose product
is $B_{\ell }$) and $L(B_{\ell })$ is bounded because $B_{\ell }$ is chosen
from a finite set. Thus an easy calculation shows that 
\begin{equation*}
\left\vert \dim \mu _{\pi }(y_{\ell })-\frac{\log \left\Vert A_{\ell
}\right\Vert )}{n_{\ell }\log (1/d)}\right\vert \rightarrow 0\text{ as }\ell
\rightarrow \infty
\end{equation*}%
and therefore $\dim \mu _{\pi }(y_{\ell })\rightarrow \underline{\dim }%
_{loc}\mu _{\pi }(x)$.
\end{proof}

\begin{theorem}
Assume $\mu _{\pi }$ is a complete quotient Cantor-like measure which admits
a block positive transition matrix. Suppose $y,z$ are block diagonal,
positive, interior periodic points. Given any real number $R,$ with $\dim
_{loc}\mu _{\pi }(y)<R<\dim _{loc}\mu _{\pi }(z),$ and $\varepsilon >0$
there is a periodic point $x$ with 
\begin{equation*}
\left\vert R-\dim _{loc}\mu _{\pi }(x)\right\vert <\varepsilon \text{.}
\end{equation*}
\end{theorem}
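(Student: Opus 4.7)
The plan is to construct, for each large pair $(m,n)$, a new admissible cycle $\theta_{m,n}$ by concatenating $\theta_y^m$ and $\theta_z^n$ (with bridging admissible paths), so that the associated transition matrix remains block diagonal and positive, and whose associated periodic point $x_{m,n}$ has local dimension approximately a convex combination of $\dim_{loc}\mu_\pi(y)$ and $\dim_{loc}\mu_\pi(z)$. By tuning the ratio $m/n$ we then hit any prescribed value $R$ in the interval to within $\varepsilon$.

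More precisely, first note that since $\widetilde{T}(\theta_y)$ and $\widetilde{T}(\theta_z)$ are block diagonal and positive, so are all their powers $\widetilde{T}(\theta_y)^m$ and $\widetilde{T}(\theta_z)^n$ (products of block positive matrices remain block positive by Lemma \ref{prodbd}(ii), and products of type-$0$ matrices remain type $0$). Using Lemma \ref{finiteset}, I select bridging blocks $B_1,B_2$ from the finite set $\mathcal{F}$ so that both $\widetilde{T}(\theta_y)^m B_1$ and $B_1 \widetilde{T}(\theta_z)^n B_2$ are block positive and of the correct type to glue together, arranging first/last letters to form an admissible cycle whose product
\[
\widetilde{T}(\theta_{m,n}) = \widetilde{T}(\theta_y)^m\,\widetilde{T}(B_1)\,\widetilde{T}(\theta_z)^n\,\widetilde{T}(B_2)
\]
is block diagonal and positive. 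The choice of $B_1,B_2$ from a finite set guarantees that the bridging cost $L(B_1)+L(B_2)$ is bounded independently of $m,n$, and the resulting $x_{m,n}$ is interior.

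Next I estimate the spectral radius via norm. Since $\widetilde{T}(\theta_{m,n})$ is block diagonal and block positive, Lemma \ref{positivesp}(i) gives $sp(\widetilde{T}(\theta_{m,n}))\asymp \|\widetilde{T}(\theta_{m,n})\|$. Repeated application of Lemma \ref{prodbd}(iii)--(iv) around the block positive bridging matrices $\widetilde{T}(B_1), \widetilde{T}(B_2)$ yields
\[
\|\widetilde{T}(\theta_{m,n})\| \asymp \|\widetilde{T}(\theta_y)^m\|\cdot \|\widetilde{T}(\theta_z)^n\|,
\]
with constants independent of $m,n$. Finally, since $\widetilde{T}(\theta_y)$ and $\widetilde{T}(\theta_z)$ are each block diagonal and positive, Lemma \ref{positivesp}(ii) gives $\|\widetilde{T}(\theta_y)^m\|\asymp sp(\widetilde{T}(\theta_y))^m$ and similarly for $\theta_z$. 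Combining,
\[
\log sp(\widetilde{T}(\theta_{m,n})) = m\log sp(\widetilde{T}(\theta_y)) + n\log sp(\widetilde{T}(\theta_z)) + O(1),
\]
where the $O(1)$ is uniform in $m,n$. Since $L(\theta_{m,n}^-) = mL(\theta_y^-) + nL(\theta_z^-) + O(1)$ as well, Proposition \ref{periodic} yields
\[
\dim_{loc}\mu_\pi(x_{m,n}) = \frac{m\log sp(\widetilde{T}(\theta_y)) + n\log sp(\widetilde{T}(\theta_z)) + O(1)}{\bigl(mL(\theta_y^-)+nL(\theta_z^-)+O(1)\bigr)\log(1/d)}.
\]

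To finish, I set $\alpha = L(\theta_y^-)$, $\beta = L(\theta_z^-)$, $D_y = \dim_{loc}\mu_\pi(y)$, $D_z = \dim_{loc}\mu_\pi(z)$. The main term of the ratio above is the convex combination
\[
\frac{m\alpha}{m\alpha+n\beta} D_y + \frac{n\beta}{m\alpha+n\beta} D_z,
\]
which is a continuous function of $t := m\alpha/(m\alpha+n\beta)\in(0,1)$ sweeping out $(D_y,D_z)$. Given $R$ in this open interval, I choose the target ratio $t_0$, then pick integers $m,n$ both large (so the $O(1)$ error terms contribute at most $\varepsilon/2$ to the local dimension) with $m\alpha/(m\alpha+n\beta)$ within $\varepsilon/(2|D_z-D_y|)$ of $t_0$; this gives $|\dim_{loc}\mu_\pi(x_{m,n})-R|<\varepsilon$. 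The main obstacle is verifying that the bridging paths $B_1, B_2$ can be chosen uniformly from a finite collection while simultaneously producing an admissible cycle, an interior periodic point, and a block positive diagonal matrix; this is exactly what Lemma \ref{finiteset} (combined with the type arithmetic of the block structure) is designed for.
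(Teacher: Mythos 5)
Your proof is correct and follows essentially the same route as the paper's: concatenate high powers of the two periods, insert block-positive bridging matrices drawn from the finite set $\mathcal{F}$ of Lemma \ref{finiteset} so that the total product is block diagonal, positive, and yields an interior periodic point, estimate the resulting spectral radius via the norm up to uniform multiplicative constants, and then tune the ratio of the two powers to hit any convex combination of the two given local dimensions. The only superficial difference is in the lower bound: you invoke Lemma \ref{prodbd}(iv) directly to get $\left\Vert A B_{1} C\right\Vert \gtrsim \left\Vert A\right\Vert \left\Vert C\right\Vert $, whereas the paper chooses bridges of specific types $j-i$ and $i-j$ (where $sp(\widetilde{T}(\theta_{y}))=sp(\widetilde{T}(\theta_{y})(j,j))$ and similarly for $z$) and extracts a single favorable block; both yield $sp(\widetilde{T}(\theta_{m,n}))\asymp sp(\widetilde{T}(\theta_{y}))^{m}\,sp(\widetilde{T}(\theta_{z}))^{n}$ with constants uniform in $m,n$, which is all that is needed.
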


\begin{proof}
Assume $y$ has period $\phi $ and $z$ period $\theta $ where $A=\widetilde{T}%
(\phi )$ and $B=\widetilde{T}(\theta )$ are block diagonal and positive.
Given $R$ as above, choose $0<t<1$ such that $R=t\dim _{loc}\mu _{\pi
}(y)+(1-t)\dim _{loc}\mu _{\pi }(z)$. Choose integers $n_{\ell },m_{\ell
}\rightarrow \infty $ such that 
\begin{equation*}
\frac{L(\phi ^{-})n_{\ell }}{L(\phi ^{-})n_{\ell }+L(\theta ^{-})m_{\ell }}%
\rightarrow t\text{.}
\end{equation*}%
Let $\alpha =sp(A)$ and $\beta =sp(B)$ and assume $\alpha =sp(A(j,j))$ and $%
\beta =sp(B(i,i))$. With this notation, 
\begin{equation*}
R=\frac{t\log \alpha }{L(\phi ^{-})\log (1/d)}+\frac{(1-t)\log \beta }{%
L(\theta ^{-})\log (1/d)}.
\end{equation*}

Choose block positive matrices $M,N$ from the finite set $\mathcal{F}$ of
Lemma \ref{finiteset} so that $M$ is type $j-i$ and $N$ is type $i-j$.

By Lemmas \ref{prodbd} and \ref{positivesp} and the block structure, there
are positive constants $c,c^{\prime },$ independent of $\ell $, which may
vary from one occurrence to another, such that 
\begin{eqnarray*}
sp(B^{m_{\ell }}MA^{n_{\ell }}N) &\geq &c\left\Vert B^{m_{\ell }}MA^{n_{\ell
}}\right\Vert \geq c\left\Vert B^{m_{\ell }}MA^{n_{\ell }}(i,j)\right\Vert \\
&=&c\left\Vert B^{m_{\ell }}(i,i)M(i,j)A^{n_{\ell }}(j,j)\right\Vert \\
&\geq &c\left\Vert B^{m_{\ell }}(i,i)\right\Vert \left\Vert A^{n_{\ell
}}(j,j)\right\Vert \geq c\beta ^{m_{\ell }}\alpha ^{n_{\ell }}.
\end{eqnarray*}%
On the other hand, as $A,B$ are both block diagonal and positive, we also
have%
\begin{equation*}
sp(B^{m_{\ell }}MA^{n_{\ell }}N)\leq c^{\prime }\left\Vert B^{m_{\ell
}}\right\Vert \left\Vert A^{n_{\ell }}\right\Vert \leq c^{\prime }\beta
^{m_{\ell }}\alpha ^{n_{\ell }}\text{.}
\end{equation*}%
It follows that if we let $x_{\ell }$ be the interior periodic point with
period $B^{m_{\ell }}MA^{n_{\ell }}N$, then%
\begin{eqnarray*}
\lim_{\ell }\dim _{loc}\mu _{\pi }(x_{\ell }) &=&\lim_{\ell }\frac{\log
sp(B^{m_{\ell }}MA^{n_{\ell }}N)}{(L(\theta ^{-})m_{\ell }+L(\phi
^{-})n_{\ell }+L(M)+L(N))\log (1/d)} \\
&=&\lim_{\ell }\frac{m_{\ell }\log \beta +n_{\ell }\log \alpha }{(L(\theta
^{-})m_{\ell }+L(\phi ^{-})n_{\ell })\log (1/d)}=R.
\end{eqnarray*}
\end{proof}

\begin{theorem}
Assume $\mu _{\pi }$ is a complete quotient Cantor-like measure which admits
a block positive transition matrix. Suppose $x_{n}$ are block diagonal,
positive, interior periodic points. Then there is some $x\in \mathbb{T}$
such that 
\begin{eqnarray*}
\overline{\dim }_{loc}\mu _{\pi }(x) &=&\limsup_{n}\dim _{loc}\mu _{\pi
}(x_{n}) \\
\underline{\dim }_{loc}\mu _{\pi }(x) &=&\liminf_{n}\dim _{loc}\mu _{\pi
}(x_{n}).
\end{eqnarray*}
\end{theorem}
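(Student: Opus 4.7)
The plan is to construct $x$ as the point whose quotient symbolic representation is an infinite admissible word formed by concatenating rapidly growing powers of the periods $\theta_n$ of the $x_n$, separated by short connectors drawn from the finite set $\mathcal{F}$ of Lemma \ref{finiteset}. Write $A_n = \widetilde{T}(\theta_n)$, $L_n = L(\theta_n^-)$, and $s_n = \dim_{loc}\mu_\pi(x_n) = \log sp(A_n)/(L_n\log(1/d))$ (Proposition \ref{periodic}), and set $\overline{s} = \limsup_n s_n$, $\underline{s} = \liminf_n s_n$. After passing to subsequences and reindexing I may assume $s_{2k-1}\to\underline{s}$ and $s_{2k}\to\overline{s}$ with $s_n\in[\underline{s} - \eta_n,\overline{s} + \eta_n]$ for some $\eta_n\to 0$. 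I would then choose positive integers $a_n$ satisfying the superlinear condition $a_nL_n\geq n\sum_{j<n}a_jL_j$ together with connectors $C_n\in\mathcal{F}$ (each block positive, as established in the proof of Lemma \ref{finiteset}), and define $x$ as the point with symbolic representation
\[
\theta_1^{a_1}\,C_1\,\theta_2^{a_2}\,C_2\,\theta_3^{a_3}\,C_3\,\cdots.
\]

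The central analytical step combines Lemma \ref{positivesp}(ii) (giving $\|A_j^{a_j}\|\sim sp(A_j)^{a_j}$, since each $A_j$ is block diagonal and positive) with iterated use of Lemma \ref{prodbd}(iii)--(iv) (using block positivity of each $C_j$ to get multiplicative lower bounds $\|X\,C_j\,Y\|\geq c_0\|X\|\|Y\|$ with $c_0$ depending only on the finite set $\mathcal{F}$). For the prefix of length $N=\sum_{j<k}(a_jL_j+L(C_j))+mL_k$ consisting of the first $k-1$ complete blocks plus $m\leq a_k$ iterations of $\theta_k$, this yields
\[
\left\|\widetilde{T}(\text{prefix})\right\|\sim\prod_{j<k}sp(A_j)^{a_j}\cdot sp(A_k)^m.
\]
By Corollary \ref{measurenetint}, and the observation that the neighbouring net intervals $\Delta_N^{\pm}(x)$ have symbolic representations differing from that of $x$ in only boundedly many positions so $M_N(x)\sim\mu_\pi(\Delta_N(x))$, taking logarithms gives
\[
\frac{\log M_N(x)}{N\log(1/d)}=\frac{\sum_{j<k}a_jL_js_j+mL_ks_k}{\sum_{j<k}a_jL_j+mL_k}+o(1),
\]
a convex combination of $s_1,\ldots,s_k$ up to vanishing error.

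Both equalities in the conclusion then follow from elementary properties of convex combinations. A convex combination lies between the min and max of its entries; since all but finitely many $s_j$ lie in $[\underline{s}-\epsilon,\overline{s}+\epsilon]$ while the total weight on the exceptional early blocks is a fixed constant (negligible relative to $\sum_{j\geq k_0}a_jL_j\to\infty$ by the superlinear growth of $a_n$), I obtain $\underline{s}\leq\underline{\dim}_{loc}\mu_\pi(x)$ and $\overline{\dim}_{loc}\mu_\pi(x)\leq\overline{s}$. Conversely, at $m=a_k$ the superlinear growth forces the average to be dominated by the single term $s_k$: evaluating at the ends of blocks with $k=2j-1$ gives $\underline{\dim}_{loc}\mu_\pi(x)\leq\underline{s}$ and at ends with $k=2j$ gives $\overline{\dim}_{loc}\mu_\pi(x)\geq\overline{s}$.

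The main obstacle will be coordinating the connectors so that Lemma \ref{prodbd}(iv) applies uniformly across every transition: that lemma's constant $c_0=c_0(C_j)$ depends on the block positive middle factor, and for the iterated estimate to remain non-degenerate, $c_0$ must be bounded below across all $j$. Restricting each $C_j$ to the finite family $\mathcal{F}$ guarantees this. A secondary point is that each connector contributes a bounded length $L(C_j)$, so the total connector contribution to $N$ is $O(k)=o(\sum_{j\leq k}a_jL_j)$ and can be absorbed into the error term. Finally, to justify $M_N(x)\sim\mu_\pi(\Delta_N(x))$ at every level (not merely at block ends), one uses that neighbouring net intervals have symbolic paths agreeing with $x$'s except in a bounded tail; since there are only finitely many reduced characteristic vectors (and finitely many connector choices), the resulting measures are comparable up to a multiplicative constant, and the prefix-norm estimate suffices for the local-dimension computation.
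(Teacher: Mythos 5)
Your proposal follows essentially the same diagonal construction and norm-estimate strategy as the paper's proof: concatenate rapidly growing powers of the periods $\theta_n$, use Lemma \ref{positivesp}(ii) to convert $\Vert A_n^{a_n}\Vert$ to $sp(A_n)^{a_n}$, use the block-positivity lemmas to get multiplicative control over the prefix, and conclude via a convex-combination argument with the superlinear growth of the $a_n$ absorbing all the accumulated constants. Two small remarks. First, the connectors $C_j$ are unnecessary here: since a complete quotient Cantor-like measure has a unique reduced characteristic vector with $d$ identical children, every characteristic vector is a child of every other, so the concatenation $\theta_1^{a_1}\theta_2^{a_2}\cdots$ is already admissible (and indeed the paper constructs $x$ without connectors). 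Second, the step ``$M_N(x)\sim\mu_\pi(\Delta_N(x))$ because the neighbours' symbolic representations differ only in boundedly many positions'' is stated too strongly: the distance to the common ancestor $\Delta_{J_N}(x)$ is of order $L(\theta_n^-)+L(\theta_{n+1}^-)$, which grows with $n$, so the comparability constant is not uniform. What saves the argument (and what the paper tracks explicitly with the constants $a_n,b_n,c_n,c_0(n)$ chosen after $i_1,\dots,i_{n-1},\theta_n$) is that $\log$ of this constant is $o(N)$ once $a_n$ is chosen large enough relative to everything that precedes it; your superlinear-growth hypothesis provides exactly this, but the justification should appeal to it rather than to a uniform bound.
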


\begin{proof}
Assume $x_{n}$ has period $\theta _{n}$ with $\widetilde{T}(\theta _{n})$
block diagonal and positive. Let 
\begin{equation*}
S=\overline{\lim }\frac{\log (sp\widetilde{T}(\theta _{n}))}{L(\theta
_{n}^{-})\log (1/d)}\text{ and }I=\underline{\lim }\frac{\log (sp\widetilde{T%
}(\theta _{n}))}{L(\theta _{n}^{-})\log (1/d)}.
\end{equation*}%
By passing to a subsequence, we can assume 
\begin{equation*}
\dim _{loc}\mu _{\pi }(x_{2n})=\frac{\log (sp\widetilde{T}(\theta _{2n}))}{%
L(\theta _{2n}^{-})\log (1/d)}\rightarrow S
\end{equation*}%
and%
\begin{equation*}
\dim _{loc}\mu _{\pi }(x_{2n+1})=\frac{\log (sp\widetilde{T}(\theta _{2n+1}))%
}{L(\theta _{2n+1}^{-})\log (1/d)}\rightarrow I.
\end{equation*}

We will define a rapidly increasing sequence $(i_{n})$ and then put $x$ to
be 
\begin{equation*}
x=(\gamma _{0},\underbrace{\theta _{1},...,\theta _{1}}_{i_{1}},\underbrace{%
\theta _{2},...,\theta _{2}}_{i_{2}},\underbrace{\theta _{3},...,\theta _{3}}%
_{i_{3}},...).
\end{equation*}%
The sequence $(i_{n})$ will be inductively defined, with $i_{n}$ depending
on $i_{j},\theta _{j}$ for $j=1,...,n-1$, $\theta _{n}$ and $\theta _{n+1}$.
The choice will be clear from the arguments that follow.

The proof that $x$ has the required properties is similar to that of \cite[%
Theorem 5.5]{HHM} and \cite[Theorem 3.13]{HHN}. We only sketch the main
ideas here.

Put $N_{n}=1+\sum_{j=1}^{n}i_{j}L(\theta _{j}^{-})$ so that $\Delta
_{N_{n}}(x)=(\gamma _{0},\underbrace{\theta _{1},...,\theta _{1}}_{i_{1}},%
\underbrace{\theta _{2},...,\theta _{2}}_{i_{2}},...,\underbrace{\theta
_{n},...,\theta _{n}}_{i_{n}})$.

Since $x_{n}$ is an interior point, $\Delta _{N_{n}}(x),$ $\Delta
_{N_{n}}^{+}(x),$ $\Delta _{N_{n}}^{-}(x)$ have a common ancestor $(\gamma
_{0},\underbrace{\theta _{1},...,\theta _{1}}_{i_{1}},\underbrace{\theta
_{2},...,\theta _{2}}_{i_{2}},...,\underbrace{\theta _{n},...,\theta _{n}}%
_{i_{n}-1})=\Delta _{N_{n}-L(\theta _{n}^{-})}(x)$. Applying Corollary \ref%
{measurenetint} and Lemma \ref{prodbd} there are constants $a_{n},b_{n}$, $%
c_{n}$ depending on $i_{j},\theta _{j},$ $j=1,...,n-1$ and $\theta _{n}$, so
that 
\begin{eqnarray*}
M_{N_{n}}(x) &=&a_{n}\mu _{\pi }(\Delta _{N_{n}-L(\theta _{n}^{-})}(x)) \\
&=&b_{n}\left\Vert \widetilde{T}(\gamma _{0},\underbrace{\theta
_{1},...,\theta _{1}}_{i_{1}},...,\underbrace{\theta _{n},...,\theta _{n}}%
_{i_{n}-1})\right\Vert =c_{n}\left\Vert \widetilde{T}(\theta
_{n})^{i_{n}-1}\right\Vert .
\end{eqnarray*}%
By choosing $i_{n}$ large enough we can ensure that 
\begin{equation*}
\left\vert \frac{\log \left\Vert \widetilde{T}(\theta
_{n})^{i_{n}-1}\right\Vert }{L(\theta _{n}^{-})(i_{n}-1)}-\frac{\log (sp%
\widetilde{T}(\theta _{n}))}{L(\theta _{n}^{-})}\right\vert \rightarrow 0,
\end{equation*}%
\begin{equation*}
\frac{\left\vert a_{n}\right\vert +\left\vert b_{n}\right\vert +\left\vert
c_{n}\right\vert }{i_{n}}\rightarrow 0
\end{equation*}%
and 
\begin{equation*}
\frac{L(\theta _{n}^{-})(i_{n}-1)}{N_{n}}\rightarrow 1.
\end{equation*}%
Consequently, 
\begin{equation*}
\left\vert \frac{\log M_{N_{n}}(x)}{N_{n}\log (1/d)}-\frac{\log (sp%
\widetilde{T}(\theta _{n}))}{L(\theta _{n}^{-})\log (1/d)}\right\vert
\rightarrow 0
\end{equation*}%
as $n\rightarrow \infty $. It follows that $\frac{\log M_{N_{n}}(x)}{%
N_{n}\log (1/d)}$ tends to $S$ and $I$ along the even and odd subsequences.
This shows that $\overline{\dim }_{loc}\mu _{\pi }(x)\geq S$ and $\underline{%
\dim }_{loc}\mu _{\pi }(x)\leq I$.

Now consider arbitrary 
\begin{equation*}
N=1+\sum_{j=1}^{n}i_{j}L(\theta _{j}^{-})+q_{n+1}L(\theta _{n+1}^{-})+r
\end{equation*}%
where $0\leq q_{n+1}<i_{n+1}$, $0\leq r<L(\theta _{n+1}^{-})$ and $r>0$ if $%
q_{n+1}=0$. Then $\Delta _{N}(x),$ $\Delta _{N}^{+}(x),$ $\Delta _{N}^{-}(x)$
have common ancestor $\Delta _{J_{N}}(x)$ where 
\begin{equation*}
J_{N}=1+\sum_{j=1}^{n}i_{j}L(\theta _{j}^{-})+(q_{n+1}-1)L(\theta _{n+1}^{-})
\end{equation*}%
if $q_{n+1}>1$ and $J_{N}=1+\sum_{j=1}^{n-1}i_{j}L(\theta
_{j}^{-})+(i_{n}-1)L(\theta _{n}^{-})$ otherwise. Similar arguments to above
show that we should study the limiting behaviour of%
\begin{equation*}
\frac{\log \mu _{\pi }(\Delta _{J_{N}}(x))}{N\log (1/d)},
\end{equation*}%
hence it suffices to study the limiting behaviour of 
\begin{equation*}
E_{n}=\frac{\log \left\Vert \widetilde{T}(\gamma _{0},\underbrace{\theta
_{1},...,\theta _{1}}_{i_{1}},...,\underbrace{\theta _{n},...,\theta _{n}}%
_{i_{n}},\underbrace{\theta _{n+1},...,\theta _{n+1}}_{q_{n+1}-1})\right%
\Vert }{N\log (1/d)},
\end{equation*}%
(with suitable modifications if $q_{n+1}=0,1$). Applying Lemma \ref{prodbd}%
(iv), with the positive matrix $\widetilde{T}(\theta _{n})$ as the matrix $%
B, $ there is a constant $c_{0}(n),$ depending on $i_{j},\theta _{j}$, $%
j=1,...,n-1$ and $\theta _{n}$ such that $E_{n}$ dominates%
\begin{eqnarray*}
&&\frac{\log c_{0}(n)+\log \left\Vert \widetilde{T}(\theta
_{n})^{i_{n}-1}\right\Vert +\log \left\Vert \widetilde{T}(\theta
_{n+1})^{q_{n+1}-1}\right\Vert }{N\log (1/d)} \\
&\geq &\frac{\log c_{0}(n)}{N\log (1/d)}+\frac{\log sp(\widetilde{T}(\theta
_{n}))^{i_{n}-1}}{N\log (1/d)}+\frac{\log sp(\widetilde{T}(\theta
_{n+1}))^{q_{n+1}-1}}{N\log (1/d)}.
\end{eqnarray*}%
Choose a sequence $(\varepsilon _{n})$ tending to $0.$ For $i_{n}$
sufficiently large, $\left\vert \frac{\log c_{0}(n)}{N\log (1/d)}\right\vert
<\varepsilon _{n}$. Furthermore, with possibly larger $i_{n}$ we have 
\begin{eqnarray*}
\frac{\log sp(\widetilde{T}(\theta _{n+1}))^{q_{n+1}-1}}{N\log (1/d)} &=&%
\frac{\log sp(\widetilde{T}(\theta _{n+1}))^{q_{n+1}-1}}{(q_{n+1}-1)L(\theta
_{n+1}^{-})\log (1/d)}\frac{(q_{n+1}-1)L(\theta _{n+1}^{-})}{N} \\
&\geq &\frac{\log sp(\widetilde{T}(\theta _{n+1}))}{L(\theta _{n+1}^{-})\log
(1/d)}t_{n}-\varepsilon _{n}
\end{eqnarray*}%
for 
\begin{equation*}
t_{n}=\frac{q_{n+1}L(\theta _{n+1}^{-})+r}{N}.
\end{equation*}%
As $1-t_{n}=1+\sum_{j=1}^{n}i_{j}L(\theta _{j}^{-})$ , we similarly have 
\begin{equation*}
\frac{\log sp(\widetilde{T}(\theta _{n}))^{i_{n}-1}}{N\log (1/d)}\geq \frac{%
\log sp(\widetilde{T}(\theta _{n}))}{L(\theta _{n}^{-})\log (1/d)}%
(1-t_{n})-\varepsilon _{n}.
\end{equation*}%
Thus 
\begin{equation*}
E_{n}\geq \frac{\log sp(\widetilde{T}(\theta _{n}))}{L(\theta _{n}^{-})\log
(1/d)}(1-t_{n})+\frac{\log sp(\widetilde{T}(\theta _{n+1}))}{L(\theta
_{n+1}^{-})\log (1/d)}t_{n}-3\varepsilon _{n}.
\end{equation*}

Appealing to Lemma \ref{positivesp}(ii), we similarly deduce that for large
enough $i_{n}$ 
\begin{eqnarray*}
E_{n} &\leq &\frac{\log \left\Vert \widetilde{T}(\theta
_{n})^{i_{n}}\right\Vert +\log \left\Vert \widetilde{T}(\theta
_{n+1})^{q_{n+1}-1}\right\Vert }{N\log (1/d)}+\varepsilon _{n} \\
&\leq &\frac{\log sp(\widetilde{T}(\theta _{n}))^{i_{n}}}{N\log (1/d)}+\frac{%
\log sp(\widetilde{T}(\theta _{n+1}))^{q_{n+1}-1}}{N\log (1/d)}+2\varepsilon
_{n} \\
&\leq &\frac{\log sp(\widetilde{T}(\theta _{n}))}{L(\theta _{n}^{-})\log
(1/d)}(1-t_{n})+\frac{\log sp(\widetilde{T}(\theta _{n+1}))}{L(\theta
_{n+1}^{-})\log (1/d)}t_{n}+3\varepsilon _{n}.
\end{eqnarray*}%
Together these estimates show that $E_{n}$ lies within $3\varepsilon _{n}$
of the same convex combinations of $\frac{\log sp(\widetilde{T}(\theta _{n}))%
}{L(\theta _{n}^{-})\log (1/d)}$ and $\frac{\log sp(\widetilde{T}(\theta
_{n+1}))}{L(\theta _{n+1}^{-})\log (1/d)}$. This proves that $\lim \inf
E_{n} $ and $\lim \sup E_{n}$ lie in the interval $[I,S]$ and hence the same
is true for $\underline{\dim }_{loc}\mu _{\pi }(x)$ and $\overline{\dim }%
_{loc}\mu _{\pi }(x).$
\end{proof}

Combining these results we immediately deduce the following.

\begin{corollary}
The set of (upper, lower) local dimensions of any complete quotient
Cantor-like measure that admits a block positive transition matrix is the
closed interval which is the closure of the set of local dimensions at block
diagonal, positive, interior periodic points. In particular, if $\mu _{\pi }$
is the quotient of the self-similar measure $\mu $ associated with the IFS (%
\ref{Cantor-like}) with $k\geq d-1$ and $\Lambda =\{0,1,...,k\}$, then the
set of local dimensions is a closed interval.
\end{corollary}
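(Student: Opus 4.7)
The plan is to assemble the three theorems proved immediately above into the desired statement, with no new arithmetic required. Let $D$ denote the set of local dimensions of $\mu_\pi$ (the argument for upper and lower dimensions is identical), and let $P$ denote the set of local dimensions at block diagonal, positive, interior periodic points. I would first establish the inclusion $P \subseteq D$ trivially (every periodic point lies in the support of $\mu_\pi$), and then the containment $D \subseteq \overline{P}$ from the first of the three theorems, which says $P$ is dense in $D$. This already gives $\overline{P} \supseteq D \supseteq P$, so the key remaining point is that $\overline{P}$ is connected.

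To see connectedness, I would use the second theorem: given any two values $\dim_{loc}\mu_\pi(y), \dim_{loc}\mu_\pi(z) \in P$ and any real $R$ strictly between them, there is a periodic point $x$ with $\dim_{loc}\mu_\pi(x)$ within $\varepsilon$ of $R$. This means every value in the open interval $(\min P, \max P)$ (more carefully, in the convex hull of $P$) lies in $\overline{P}$. Hence $\overline{P}$ is an interval. Finally, to show $\overline{P} \subseteq D$, i.e.\ that every limit of periodic point dimensions is actually realized as a local dimension at some $x \in \mathbb{T}$, I would invoke the third theorem (with both the $\limsup$ and $\liminf$ sequences chosen as the approximating sequence): given a sequence $x_n$ of such periodic points whose local dimensions converge to a value $R \in \overline{P}$, there is $x\in \mathbb{T}$ with $\underline{\dim}_{loc}\mu_\pi(x) = \overline{\dim}_{loc}\mu_\pi(x) = R$, so $R \in D$. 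Combining these gives $D = \overline{P}$, a closed interval.

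For the ``in particular'' clause, I only need to verify that the hypothesis ``admits a block positive transition matrix'' is satisfied under the assumptions $k \geq d-1$ and $\Lambda = \{0,1,\ldots,k\}$. This is precisely the content of the lemma showing that $(\widetilde{T}(0)\widetilde{T}(d-1))^k$ is block diagonal and positive; in particular that matrix is block positive, so the hypothesis of the preceding corollary applies and the conclusion that the set of local dimensions is a closed interval follows immediately.

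The main (and essentially only) obstacle is bookkeeping: being careful that the three theorems stated in this subsection have really established, between them, (i) density of $P$ in $D$, (ii) interval-fillability within $P$, and (iii) realizability of arbitrary accumulation points of $P$ as honest local dimensions. Each ingredient is individually available, so the corollary is a clean assembly; no separate argument is needed for the upper and lower local dimension versions because the three theorems were stated uniformly for both.
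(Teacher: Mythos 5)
Your proposal is correct and is precisely the argument the paper has in mind; the paper itself just writes ``Combining these results we immediately deduce the following,'' and your write-up supplies the bookkeeping that one-liner leaves implicit. One small point worth flagging: the statement of the ``interval-fillability'' theorem only promises a \emph{periodic} point $x$ with $|\dim_{loc}\mu_\pi(x) - R| < \varepsilon$, not a priori a block diagonal, positive, interior one; to conclude $R \in \overline{P}$ as you do, you either need to note (as the proof of that theorem shows) that the constructed $x_\ell$ actually are block diagonal, positive, interior periodic points, or alternatively observe that $R$ then lies in $\overline{D}$ and that $\overline{D} = \overline{P}$ since $P \subseteq D \subseteq \overline{P}$. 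Either fix is immediate. The citation of the lemma that $(\widetilde{T}(0)\widetilde{T}(d-1))^{k}$ is block diagonal and positive for the ``in particular'' clause is exactly right.
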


\subsection{Bounds on the local dimensions of Cantor-like measures}

In \cite{FHJ} it was shown that the set of local dimensions of the quotient
of the $3$-fold convolution of the middle third Cantor measure $\nu $, is a
proper subset of the interval component of the set of local dimensions of $%
\nu $. In this subsection, we will see that this is true, more generally,
for quotients of $(d+k)$-fold convolutions of uniform Cantor measures with
contraction factor $1/d$, provided $d$ is sufficiently large.

For matrix $M,$ let $\left\Vert M\right\Vert _{\min
}=\min_{j}\sum_{i}\left\vert M_{ij}\right\vert $ denote the minimum column
sum. Of course, $\left\Vert M\right\Vert \geq \left\Vert M\right\Vert _{\min
}$ and in \cite{HHN} it is observed that $\left\Vert M_{1}M_{2}\right\Vert
_{\min }\geq \left\Vert M_{1}\right\Vert _{\min }\left\Vert M_{2}\right\Vert
_{\min }$.

There is a refinement of this for block matrices. Let $T$ be a block matrix
with $D$ non-zero blocks and recall that by \ $T(i,j)$ we mean the block $%
(i,j)$ submatrix of $T$. The arithmetic/geometric mean inequality implies
that 
\begin{align*}
\left\Vert T\right\Vert & \geq \sum_{\substack{ (i,j)  \\ \text{non-zero
blocks}}}\left\Vert T(i,j)\right\Vert _{\min } \\
& \geq D\left( \prod_{\substack{ (i,j)  \\ \text{non-zero blocks}}}%
\left\Vert T(i,j)\right\Vert _{\min }\right) ^{1/D} \\
& \geq \left( \prod_{\substack{ (i,j)  \\ \text{non-zero blocks}}}\left\Vert
T(i,j)\right\Vert _{\min }\right) ^{1/D}. \\
&
\end{align*}%
More generally, for block matrices $T_{k},$ 
\begin{eqnarray*}
\left\Vert T_{1}T_{2}\cdot \cdot \cdot T_{t}\right\Vert &\geq &\left( \prod 
_{\substack{ (i,j)  \\ \text{non-zero blocks of }T_{1}T_{2}\dots T_{t}}}%
\left\Vert T_{1}T_{2}\cdot \cdot \cdot T_{t}(i,j)\right\Vert _{\min }\right)
^{1/D} \\
&=&\left( \prod_{\substack{ (i,j)  \\ \text{non-zero blocks of }%
T_{1}T_{2}\dots T_{t}}}\prod_{k=1}^{t}\left\Vert
T_{k}(i_{k},j_{k})\right\Vert _{\min }\right) ^{1/D} \\
&=&\left( \prod_{k=1}^{t}\prod_{\substack{ (i,j)  \\ \text{non-zero blocks
of }T_{k}}}\left\Vert T_{k}(i,j)\right\Vert _{\min }\right) ^{1/D}
\end{eqnarray*}%
where $T_{1}T_{2}\cdot \cdot \cdot
T_{t}(i,j)=\prod_{k=1}^{t}T_{k}(i_{k},j_{k})$. 

Combined with (\ref{reg}), this observation directly yields the following
upper bound on local dimensions.

\begin{proposition}
\label{upperbd}Suppose $\mu _{\pi }$ is a complete quotient Cantor-like
measure with contraction factor $1/d$ and regular probabilities. If 
\begin{equation*}
\left( \prod_{\text{ }(i,j)\text{ non-zero blocks}}\left\Vert \widetilde{T}%
(\ell )(i,j)\right\Vert _{\min }\right) ^{1/(d-1)}\geq \theta
\end{equation*}%
for each primitve transition matrix $\widetilde{T}(\ell )$, then 
\begin{equation*}
\sup_{x}\dim _{loc}\mu _{\pi }(x)\leq \frac{\log \theta }{\log 1/d}.
\end{equation*}
\end{proposition}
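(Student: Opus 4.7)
The plan is to combine formula (\ref{reg}) for the local dimension of regular-probability full-support measures with the block-matrix AM--GM inequality developed in the paragraphs immediately preceding the statement. Since a complete quotient Cantor-like measure has support equal to the full torus and, by hypothesis, regular probabilities, (\ref{reg}) applies: at any $x\in\mathbb{T}$ with symbolic representation $(\gamma_0,\gamma_1,\dots)$ we have
\begin{equation*}
\dim_{loc}\mu_\pi(x)=\lim_{n\to\infty}\frac{\log\|T(\gamma_0,\dots,\gamma_n)\|}{n\log(1/d)}.
\end{equation*}

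Next I would apply the block-matrix estimate
\begin{equation*}
\|T_1T_2\cdots T_n\|\geq\left(\prod_{k=1}^n\prod_{(i,j)\text{ non-zero of }T_k}\|T_k(i,j)\|_{\min}\right)^{1/(d-1)}
\end{equation*}
from the display just above the proposition, taken with $T_k=\widetilde{T}(\gamma_{k-1},\gamma_k)$. Two small observations justify this: the sum-norm $\|\cdot\|$ is invariant under the row/column permutation that converts $T$ into $\widetilde T$, and every primitive transition matrix of a complete quotient Cantor-like measure is a block matrix of some type $r\bmod(d-1)$, so it has exactly $d-1$ non-zero blocks. Thus the exponent $1/(d-1)$ in the hypothesis matches the exponent in the block inequality, and applying the hypothesis to each factor $T_k$ yields $\|T_1\cdots T_n\|\geq\theta^n$.

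Finally, taking logarithms and dividing by the negative quantity $n\log(1/d)$ reverses the inequality, giving
\begin{equation*}
\frac{\log\|T(\gamma_0,\dots,\gamma_n)\|}{n\log(1/d)}\leq\frac{\log\theta}{\log(1/d)}
\end{equation*}
for every $n$; passing to the limit and then taking the supremum over $x$ produces the stated bound.

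I do not anticipate a genuine obstacle here: the statement is essentially a packaging of the preceding AM--GM/submultiplicativity computation with the regular-probability formula (\ref{reg}). The one detail worth spelling out is the match between the number of non-zero blocks of an arbitrary primitive transition matrix and the exponent appearing in the hypothesis, which follows at once from the definition of a block matrix of a fixed type.
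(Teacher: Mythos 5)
Your proof is correct and follows exactly the route the paper intends: apply the regular-probability formula (\ref{reg}), use the AM--GM/submultiplicativity estimate for block matrices, and note that each primitive $\widetilde T(\ell)$ has exactly $d-1$ non-zero blocks so the exponent $1/(d-1)$ matches. The paper simply says "Combined with (\ref{reg}), this observation directly yields" the proposition; your write-up supplies the small bookkeeping details (permutation-invariance of the sum norm, the sign flip when dividing by $\log(1/d)$) that the paper leaves implicit.
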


\begin{proposition}
\label{prop:shrink} Let $d,k$ be non-negative integers with $d\geq 3$ and
let $\nu =\nu (d,k)$ be the $(d+k)$-fold convolution of the uniform Cantor
measure with contraction factor $1/d$ and $\nu_\pi$ the associated measure
on the torus. For any fixed $k,$%
\begin{equation*}
\{\dim _{loc}\nu _{\pi }(x):x\in \text{supp }\nu _{\pi }\}\subsetneq \{\dim
_{loc}\nu (x):x\in \text{supp }\nu ,x\neq 0,d+k\}
\end{equation*}%
provided $d$ is sufficiently large$.$
\end{proposition}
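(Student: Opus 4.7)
The plan is to apply Proposition~\ref{upperbd} to bound $\sup_x \dim_{loc}\nu_\pi(x)$ from above, and to exhibit a non-endpoint periodic point of $\nu$ on $\mathbb{R}$ whose local dimension exceeds this bound for $d$ large. Both sets in the statement are closed intervals (the torus one by the corollary at the end of Section~\ref{sec:4.2}, using that the $(d+k)$-fold convolution Cantor-like measure is a complete quotient Cantor-like measure admitting a block positive transition matrix; the essential one on $\mathbb{R}$ by \cite{HHN}), so it suffices to show that the right endpoint of the torus interval is strictly smaller than the right endpoint of the essential interval of $\nu$ on $\mathbb{R}$ for all $d$ sufficiently large.

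First I would write out the primitive quotient transition matrices $\widetilde{T}(\ell)$ for $\ell=0,\dots,d-1$ using Lemma~\ref{lem:T}. The non-zero entries of $T(\ell)$ are probabilities $p_s=\binom{d+k}{s}2^{-(d+k)}$, and from the residue analysis $s\equiv i-1-\ell\pmod d$ each column of $\widetilde{T}(\ell)$ has either a single entry $p_{s_0}$ (for $s_0\in\{k+1,\dots,d-1\}$) or two entries whose sum is $p_{s_0}+p_{s_0+d}$ (for $s_0\in\{0,\dots,k\}$). The torus-specific wrap-around term $p_{s_0+d}=\binom{d+k}{k-s_0}2^{-(d+k)}$ is the critical polynomial-in-$d$ factor that distinguishes the torus from $\mathbb{R}$; exactly $k+1$ columns per $\widetilde{T}(\ell)$ are of this two-entry wrap-around type, while the remaining $d-k-2$ columns are single-entry with column sum at least $p_{k+1}\sim d^{k+1}\cdot 2^{-(d+k)}/(k+1)!$.

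Next, I would estimate
\[
\theta \;=\; \min_\ell \Biggl(\prod_{(i,j)\text{ non-zero block}}\|\widetilde{T}(\ell)(i,j)\|_{\min}\Biggr)^{1/(d-1)}
\]
from below by bounding each of the $d-1$ block factors. The at most $k+1$ wrap-around blocks contribute strictly positive constants depending on $k$, while the remaining $d-k-2$ single-entry blocks each contribute at least $p_{k+1}$. Taking the $(d-1)$-st root and invoking Proposition~\ref{upperbd} yields
\[
\sup_{x\in\mathbb{T}}\dim_{loc}\nu_\pi(x) \;\le\; \frac{(d+k)\log 2}{\log d}\;-\;\alpha(k)\;+\;o(1) \qquad\text{as }d\to\infty,
\]
where $\alpha(k)=(k+1)(d-k-2)/(d-1)\to k+1$.

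Finally, on $\mathbb{R}$ I would exhibit a non-endpoint admissible periodic point $y$ whose local dimension is $\geq\tfrac{(d+k)\log 2}{\log d}-\beta(k)+o(1)$ for some $\beta(k)<\alpha(k)$. A natural candidate is a cycle visiting a characteristic vector near, but not at, the endpoint $0$, so that the corresponding transition matrix has spectral radius dominated by the smallest non-endpoint probabilities in the transition diagram; one then reads off the dimension via Proposition~\ref{periodic}. The main obstacle will be producing such a periodic point inside the essential class of $\nu$ on $\mathbb{R}$ and verifying the strict gap $\beta(k)<\alpha(k)$ asymptotically: for $k\ge 1$ the wrap-around contribution gives $\alpha(k)$ larger than any gap achievable at a non-endpoint periodic point of $\nu$, while for $k=0$ one needs to analyze a specific short cycle near the endpoint and exploit that the right endpoint of the essential interval of $\nu$ on $\mathbb{R}$ is strictly bounded away from $\log p_1/\log(1/d)$ to obtain the strict separation.
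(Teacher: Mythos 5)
Your plan for the torus upper bound is essentially the paper's: apply Proposition~\ref{upperbd} to the primitive block matrices $\widetilde{T}(\ell)$ coming from Lemma~\ref{lem:T}, and estimate $\bigl(\prod\|\widetilde{T}(\ell)(i,j)\|_{\min}\bigr)^{1/(d-1)}$ from below. (One minor inaccuracy: the ``wrap-around'' blocks do not contribute ``strictly positive constants depending on $k$''; their minimum column sums are of size roughly $2^{-(d+k)}$ times a polynomial in $d$, not a constant. But asymptotically this does not break your outline.)

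The real gap is in the $\mathbb{R}$-side lower bound, and you correctly identify it as the ``main obstacle.'' The paper does not construct a periodic point from scratch. Instead it simply invokes the explicit formula from \cite[Thm.\ 6.1]{BHM},
\begin{equation*}
\sup_{x\neq 0,\,d+k}\dim_{loc}\nu(x)=\frac{\log\beta}{\log(1/d)},\qquad
\beta=\frac{p_{r+d+1}+p_r+\sqrt{(p_{r+d+1}-p_r)^2+4p_{r+1}p_{r+d}}}{2},\quad r=\lfloor k/2\rfloor,
\end{equation*}
and then bounds $\beta\le C_0(d+k)^{k/2+1}2^{-(d+k)}$, giving
\begin{equation*}
\sup_{x\neq 0,d+k}\dim_{loc}\nu(x)\ \ge\ \frac{(d+k)\log 2-\log C_0(d+k)^{k/2+1}}{\log d}.
\end{equation*}
This exact input is what makes the strict separation go through for every fixed $k$: the paper's torus estimate produces a polynomial factor $(d+k)^{3(k+2)/4}$ in the denominator of the upper bound, and $3(k+2)/4>k/2+1$ for all $k\ge 0$, so the gap is uniform. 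Your rougher asymptotic $-(k+1)$ for the torus bound versus the $\mathbb{R}$ bound $-(k/2+1)$ gives equality at $k=0$, which is precisely why you were forced into the ad hoc ``analyze a specific short cycle near the endpoint'' argument you did not carry out; the \cite{BHM} formula sidesteps that difficulty entirely. In short: same strategy for the torus side, but you are missing the key external ingredient on the $\mathbb{R}$ side, and without it the $k=0$ case of the statement is not established.
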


\begin{proof}
It is easy to see that 
\begin{equation*}
\{\dim _{loc}\nu _{\pi }(x):x\in \text{supp }\nu _{\pi }\}\subseteq \{\dim
_{loc}\nu (x):x\in \text{supp }\nu ,x\neq 0,d+k\}
\end{equation*}%
by noting that 
\begin{equation*}
\min_{\ell }\left( \underline{\dim }_{loc}\mu (x+\ell )\right) \leq \dim
_{loc}\mu _{\pi }(x)\leq \min_{\ell }\left( \overline{\dim }_{loc}\mu
(x+\ell )\right) .
\end{equation*}%
Hence we need only show the strict inclusion. In fact, we will show that $%
\sup_{x}\dim _{loc}\nu _{\pi }(x)<\sup_{x\neq 0,d+k}\dim _{loc}\nu (x)$.

There is no loss in assuming $k\leq d-2$. In this case, \cite[Thm. 6.1]{BHM}
gives the formula%
\begin{equation*}
\sup_{x\neq 0,d+k}\dim _{loc}\nu (x)=\frac{\log \beta }{\log 1/d}
\end{equation*}%
for 
\begin{equation*}
\beta =\frac{p_{r+d+1}+p_{r}+\sqrt{(p_{r+d+1}-p_{r})^{2}+4p_{r+1}p_{r+d}}}{2}%
,
\end{equation*}%
where $r=\left[ \frac{k}{2}\right] $ and 
\begin{equation*}
p_{j}=\binom{d+k}{j}2^{-(d+k)}.
\end{equation*}%
Using the bound $\binom{d+k}{j}\leq (d+k)^{j}$ for $j\leq (d+k)/2$ one can
easily verify that $\beta \leq C_{0}(d+k)^{k/2+1}2^{-(d+k)}$ for a constant $%
C_{0}$ independent of $d.$ Thus%
\begin{equation}
\sup_{x\neq 0,d+k}\dim _{loc}\nu (x)\geq \frac{\log 2^{d+k}-\log
C_{0}(d+k)^{k/2+1}}{\log d}.  \label{nontorus}
\end{equation}

Next, we will apply Proposition \ref{upperbd} to find an upper bound on $%
\dim _{loc}\nu _{\pi }(x)$. For this, we will need to obtain lower bounds on 
$\left\Vert \widetilde{T}(\ell )(i,j)\right\Vert _{\min }$ for the primitive
transition matrices $\widetilde{T}(\ell )$, $\ell =0,...,d-1$. Each of these
block matrices has $d-1$ non-zero blocks $(i,j)$ where $j-i\equiv \ell $ $%
\mod(d-1)$. We recall that the $(i,j)$ block is of size $\left(\left[ \frac{%
d+k-i}{d-1}\right] +1\right)\times \left(\left[ \frac{d+k-j}{d-1}\right]
+1\right)$ and so has either one or two rows, the latter if and only if $%
i\leq k+1,$ and similarly for the columns. A calculation shows that the $%
(i^{\prime },j^{\prime })$ entry of block $(i,j)$ is equal to $%
p_{i-1-j^{\prime }+i^{\prime }d}$ if $j\geq i$ and $p_{i-j^{\prime
}+i^{\prime }d}$ if $j<i$.

First, suppose block $(i,j)$ has one row (hence $i>k+1$). Then the $%
j^{\prime }$ - column sum is either $p_{i-1-j^{\prime }}$ (when $j\geq i$)
or $p_{i-j^{\prime }}$ (when $j<i$)$.$ If there is also only one column
(equivalently, $j>k+1$), then 
\begin{equation*}
\left\Vert \widetilde{T}(\ell )(i,j)\right\Vert _{\min }\geq \left\{ 
\begin{array}{cc}
p_{i-1} & \text{ if }k+1<i\leq (d+k)/2 \\ 
p_{i} & \text{if }i>(d+k)/2%
\end{array}%
\right. .
\end{equation*}

If, instead, the block has two columns (when $j\leq k+1$ and therefore $j<i$%
), we obtain the same conclusion for the minimum column sum.

Otherwise, block $(i,j)$ has two rows, (i.e., $i\leq k+1\leq (d+k)/2).$
Similar reasoning shows that then $\left\Vert \widetilde{T}(\ell
)(i,j)\right\Vert _{\min }\geq \min (p_{i-2},$ $2^{-(d+k)})$. Thus%
\begin{eqnarray*}
\prod_{(i,j)\text{ non-zero blocks}}\left\Vert \widetilde{T}(\ell
)(i,j)\right\Vert _{\min } &\geq &\prod_{i=2}^{k+1}p_{i-2}\prod_{i=k+2}^{%
\left[ \frac{d+k}{2}\right] }p_{i-1}\prod_{i>\left[ \frac{d+k}{2}\right]
}^{d-1}p_{i} \\
&\geq &\frac{1}{p_{k}}\prod_{i=0}^{\left[ \frac{d+k}{2}\right]
-1}p_{i}\prod_{i>\left[ \frac{d+k}{2}\right] }^{d-1}p_{i}.
\end{eqnarray*}

Let $t=k+2.$ For large enough $d,$ $p_{k}\leq p_{k+1}$ and $\binom{d+k}{t}%
\geq (d+k)^{t-1}$. Of course, $\binom{d+k}{s}\geq \binom{d+k}{t}$ for $t\leq
s\leq \left[ \frac{d+k}{2}\right] $ and for $s>\left[ \frac{d+k}{2}\right] $%
, $p_{s}=p_{d+k-s}$. Applying these facts, it follows that 
\begin{eqnarray*}
\prod_{(i,j)}\left\Vert \widetilde{T}(\ell )(i,j)\right\Vert _{\min } &\geq
&2^{-(d+k)(d-1)}\prod_{i=k+3}^{\left[ \frac{d+k}{2}\right] -1}\binom{d+k}{i}%
\prod_{i>\left[ \frac{d+k}{2}\right] }^{d-3}\binom{d+k}{i} \\
&\geq &2^{-(d+k)(d-1)}(d+k)^{t\left( d-6-k\right) }.
\end{eqnarray*}%
Thus for sufficiently large $d,$%
\begin{equation*}
\left( \prod_{(i,j)}\left\Vert \widetilde{T}(\ell )(i,j)\right\Vert _{\min
}\right) ^{1/(d-1)}\geq 2^{-(d+k)}(d+k)^{3(k+2)/4}.
\end{equation*}

It follows from Proposition \ref{upperbd} and (\ref{nontorus}) that for
large enough $d,$ 
\begin{eqnarray*}
\sup_{x}\dim _{loc}\nu _{\pi }(x) &\leq &\frac{\log
2^{-(d+k)}(d+k)^{3(k+2)/4}}{\log 1/d} \\
&=&\frac{\log 2^{d+k}-\log (d+k)^{3(k+2)/4}}{\log d} \\
&<&\sup_{x\neq 0,d+k}\dim _{loc}\nu (x).
\end{eqnarray*}
\end{proof}

\begin{remark}
\label{remark:conj} We conjecture that when $3\leq d\leq m$, the set of
local dimensions of the quotient of the $m$-fold convolution of a uniform
Cantor measure with contraction factor $1/d$ is a proper subset of the set
of local dimensions at essential points of the pre-quotient measure. We have
checked this numerically for all $3\leq d\leq m\leq 10.$ See Table \ref%
{tab:conj} of \cite{HArXiv}. Numerical evidence suggests this is also true
for $m=d-1$ for $d\geq 4$. It is known by Theorem \ref{strictsep} that the
two sets of local dimensions will be equal for $m<d-1$.
\end{remark}

\newpage
\clearpage

\allowdisplaybreaks
\begin{center}
{\textbf{\large LOCAL DIMENSIONS OF MEASURES OF FINITE TYPE ON THE TORUS -- SUPPLEMENTARY INFORMATION}}

\vspace{1 in}

KATHRYN E. HARE, KEVIN G. HARE, AND KEVIN R. MATTHEWS
\end{center}

\thispagestyle{empty}
\renewcommand{\rightmark}{SUPPLEMENTARY INFO}
\setcounter{equation}{0}
\setcounter{section}{0}
\setcounter{figure}{0}
\setcounter{table}{0}
\setcounter{page}{1}
\makeatletter
\renewcommand{\thefigure}{S\arabic{figure}}
\renewcommand{\thetable}{S\arabic{table}}
\renewcommand{\thesection}{S\arabic{section}}

\section{Details of Example \ref{EssnotUnique}}
 
Consider the measure $\mu$ given by the maps $S_i(x) = x/4  + d_i$ with $d_{0} = 0$, $d_{1} = 3/5$, $d_{2} = 6/5$, $d_{3} = 9/5$, and $d_{4} = 3$.
The convex hull of this IFS is $[0,4]$.
Let $\mu_\pi$ be the quotient measure.
The reduced transition diagram has 10 reduced characteristic vectors.
The reduced characteristic vectors are:
\begin{itemize}
\item Reduced characteristic vector 1: $(1, (0, 1, 2, 3))$ 
\item Reduced characteristic vector 2: $(4/5, (0, 4/5, 8/5, 16/5))$ 
\item Reduced characteristic vector 3: $(8/5, (0, 4/5, 8/5, 12/5))$ 
\item Reduced characteristic vector 4: $(4/5, (0, 8/5, 12/5, 16/5))$ 
\item Reduced characteristic vector 5: $(4/5, (0, 4/5, 12/5, 16/5))$ 
\item Reduced characteristic vector 6: $(4/5, (0, 4/5, 8/5, 12/5))$ 
\item Reduced characteristic vector 7: $(4/5, (0, 4/5, 8/5, 12/5, 16/5))$ 
\item Reduced characteristic vector 8: $(4/5, (4/5, 8/5, 12/5, 16/5))$ 
\item Reduced characteristic vector 9: $(4/5, (4/5, 8/5, 16/5))$ 
\item Reduced characteristic vector 10: $(4/5, (0, 8/5, 12/5))$ 
\end{itemize}
The maps are:
\begin{itemize}
\item RCV $1 \to [2, 3, 4, 5]$
\item RCV $2 \to [2, 6, 7, 7]$
\item RCV $3 \to [7, 7, 7, 7, 8, 4, 5, 2]$
\item RCV $4 \to [6, 7, 8, 4]$
\item RCV $5 \to [5, 9, 10, 5]$
\item RCV $6 \to [7, 7, 7, 7]$
\item RCV $7 \to [7, 7, 7, 7]$
\item RCV $8 \to [8, 4, 5, 2]$
\item RCV $9 \to [9, 10, 5, 9]$
\item RCV $10 \to [10, 5, 9, 10]$
\end{itemize}

See Figure \ref{fig:ArXPic1} for the transition diagram.
\begin{figure}[tbp]
\includegraphics[scale=0.5]{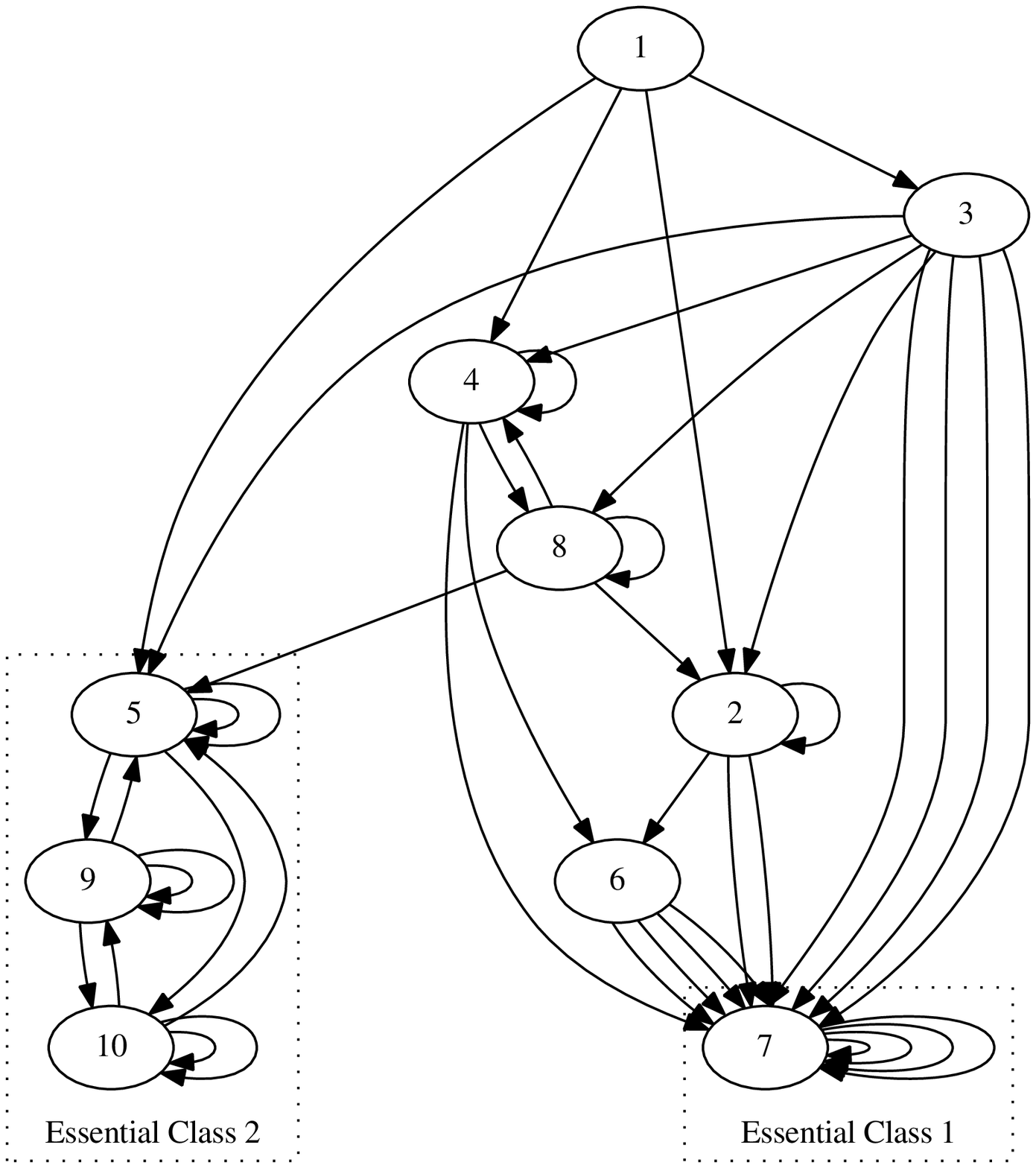}
\caption{$4 x-1$ with $d_i \in [0, 3/5, 6/5, 9/5, 3]$, Full set + Essential class}
\label{fig:ArXPic1}
\end{figure}
This has (normalized) transition matrices:
\begin{align*}
T(1,2) & =  \left[ \begin {array}{cccc} p_{{1}}&0&0&0\\ \noalign{\medskip}0&0&p_{{2}}&0\\ \noalign{\medskip}0&p_{{4}}&0&p_{{3}}\\ \noalign{\medskip}p_{{5}}&0&0&0\end {array} \right] & 
 T(1,3) & =  \left[ \begin {array}{cccc} 0&p_{{1}}&0&0\\ \noalign{\medskip}p_{{3}}&0&0&p_{{2}}\\ \noalign{\medskip}0&0&p_{{4}}&0\\ \noalign{\medskip}0&p_{{5}}&0&0\end {array} \right] \\ 
T(1,4) & =  \left[ \begin {array}{cccc} p_{{2}}&0&p_{{1}}&0\\ \noalign{\medskip}0&p_{{3}}&0&0\\ \noalign{\medskip}0&0&0&p_{{4}}\\ \noalign{\medskip}0&0&p_{{5}}&0\end {array} \right] & 
 T(1,5) & =  \left[ \begin {array}{cccc} 0&p_{{2}}&0&p_{{1}}\\ \noalign{\medskip}p_{{4}}&0&p_{{3}}&0\\ \noalign{\medskip}0&0&0&0\\ \noalign{\medskip}0&0&0&p_{{5}}\end {array} \right] \\ 
T(2,2) & =  \left[ \begin {array}{cccc} p_{{1}}&0&0&0\\ \noalign{\medskip}0&p_{{2}}&0&p_{{1}}\\ \noalign{\medskip}0&0&p_{{3}}&0\\ \noalign{\medskip}0&p_{{5}}&0&0\end {array} \right] & 
 T(2,6) & =  \left[ \begin {array}{cccc} 0&p_{{1}}&0&0\\ \noalign{\medskip}0&0&p_{{2}}&0\\ \noalign{\medskip}p_{{4}}&0&0&p_{{3}}\\ \noalign{\medskip}0&0&p_{{5}}&0\end {array} \right] \\ 
T(2,7) & =  \left[ \begin {array}{ccccc} 0&0&p_{{1}}&0&0\\ \noalign{\medskip}p_{{3}}&0&0&p_{{2}}&0\\ \noalign{\medskip}0&p_{{4}}&0&0&p_{{3}}\\ \noalign{\medskip}0&0&0&p_{{5}}&0\end {array} \right] & 
 T(2,7) & =  \left[ \begin {array}{ccccc} p_{{2}}&0&0&p_{{1}}&0\\ \noalign{\medskip}0&p_{{3}}&0&0&p_{{2}}\\ \noalign{\medskip}0&0&p_{{4}}&0&0\\ \noalign{\medskip}0&0&0&0&p_{{5}}\end {array} \right] \\ 
T(3,7) & =  \left[ \begin {array}{ccccc} p_{{1}}&0&0&0&0\\ \noalign{\medskip}0&p_{{2}}&0&0&p_{{1}}\\ \noalign{\medskip}0&0&p_{{3}}&0&0\\ \noalign{\medskip}0&0&0&p_{{4}}&0\end {array} \right] & 
 T(3,7) & =  \left[ \begin {array}{ccccc} 0&p_{{1}}&0&0&0\\ \noalign{\medskip}0&0&p_{{2}}&0&0\\ \noalign{\medskip}p_{{4}}&0&0&p_{{3}}&0\\ \noalign{\medskip}0&0&0&0&p_{{4}}\end {array} \right] \\ 
T(3,7) & =  \left[ \begin {array}{ccccc} 0&0&p_{{1}}&0&0\\ \noalign{\medskip}p_{{3}}&0&0&p_{{2}}&0\\ \noalign{\medskip}0&p_{{4}}&0&0&p_{{3}}\\ \noalign{\medskip}0&0&0&0&0\end {array} \right] & 
 T(3,7) & =  \left[ \begin {array}{ccccc} p_{{2}}&0&0&p_{{1}}&0\\ \noalign{\medskip}0&p_{{3}}&0&0&p_{{2}}\\ \noalign{\medskip}0&0&p_{{4}}&0&0\\ \noalign{\medskip}p_{{5}}&0&0&0&0\end {array} \right] \\ 
T(3,8) & =  \left[ \begin {array}{cccc} p_{{2}}&0&0&p_{{1}}\\ \noalign{\medskip}0&p_{{3}}&0&0\\ \noalign{\medskip}0&0&p_{{4}}&0\\ \noalign{\medskip}p_{{5}}&0&0&0\end {array} \right] & 
 T(3,4) & =  \left[ \begin {array}{cccc} 0&p_{{2}}&0&0\\ \noalign{\medskip}p_{{4}}&0&p_{{3}}&0\\ \noalign{\medskip}0&0&0&p_{{4}}\\ \noalign{\medskip}0&p_{{5}}&0&0\end {array} \right] \\ 
T(3,5) & =  \left[ \begin {array}{cccc} p_{{3}}&0&p_{{2}}&0\\ \noalign{\medskip}0&p_{{4}}&0&p_{{3}}\\ \noalign{\medskip}0&0&0&0\\ \noalign{\medskip}0&0&p_{{5}}&0\end {array} \right] & 
 T(3,2) & =  \left[ \begin {array}{cccc} 0&p_{{3}}&0&p_{{2}}\\ \noalign{\medskip}0&0&p_{{4}}&0\\ \noalign{\medskip}p_{{5}}&0&0&0\\ \noalign{\medskip}0&0&0&p_{{5}}\end {array} \right] \\ 
T(4,6) & =  \left[ \begin {array}{cccc} p_{{1}}&0&0&0\\ \noalign{\medskip}0&0&p_{{3}}&0\\ \noalign{\medskip}0&0&0&p_{{4}}\\ \noalign{\medskip}0&p_{{5}}&0&0\end {array} \right] & 
 T(4,7) & =  \left[ \begin {array}{ccccc} 0&p_{{1}}&0&0&0\\ \noalign{\medskip}p_{{4}}&0&0&p_{{3}}&0\\ \noalign{\medskip}0&0&0&0&p_{{4}}\\ \noalign{\medskip}0&0&p_{{5}}&0&0\end {array} \right] \\ 
T(4,8) & =  \left[ \begin {array}{cccc} 0&p_{{1}}&0&0\\ \noalign{\medskip}p_{{4}}&0&0&p_{{3}}\\ \noalign{\medskip}0&0&0&0\\ \noalign{\medskip}0&0&p_{{5}}&0\end {array} \right] & 
 T(4,4) & =  \left[ \begin {array}{cccc} p_{{2}}&0&p_{{1}}&0\\ \noalign{\medskip}0&p_{{4}}&0&0\\ \noalign{\medskip}p_{{5}}&0&0&0\\ \noalign{\medskip}0&0&0&p_{{5}}\end {array} \right] \\ 
T(5,5) & =  \left[ \begin {array}{cccc} p_{{1}}&0&0&0\\ \noalign{\medskip}0&p_{{2}}&0&p_{{1}}\\ \noalign{\medskip}0&0&p_{{4}}&0\\ \noalign{\medskip}0&p_{{5}}&0&0\end {array} \right] & 
 T(5,9) & =  \left[ \begin {array}{ccc} p_{{1}}&0&0\\ \noalign{\medskip}0&p_{{2}}&0\\ \noalign{\medskip}0&0&p_{{4}}\\ \noalign{\medskip}0&p_{{5}}&0\end {array} \right] \\ 
T(5,10) & =  \left[ \begin {array}{ccc} 0&p_{{1}}&0\\ \noalign{\medskip}p_{{3}}&0&p_{{2}}\\ \noalign{\medskip}0&0&0\\ \noalign{\medskip}0&0&p_{{5}}\end {array} \right] & 
 T(5,5) & =  \left[ \begin {array}{cccc} p_{{2}}&0&p_{{1}}&0\\ \noalign{\medskip}0&p_{{3}}&0&p_{{2}}\\ \noalign{\medskip}p_{{5}}&0&0&0\\ \noalign{\medskip}0&0&0&p_{{5}}\end {array} \right] \\ 
T(6,7) & =  \left[ \begin {array}{ccccc} p_{{1}}&0&0&0&0\\ \noalign{\medskip}0&p_{{2}}&0&0&p_{{1}}\\ \noalign{\medskip}0&0&p_{{3}}&0&0\\ \noalign{\medskip}0&0&0&p_{{4}}&0\end {array} \right] & 
 T(6,7) & =  \left[ \begin {array}{ccccc} 0&p_{{1}}&0&0&0\\ \noalign{\medskip}0&0&p_{{2}}&0&0\\ \noalign{\medskip}p_{{4}}&0&0&p_{{3}}&0\\ \noalign{\medskip}0&0&0&0&p_{{4}}\end {array} \right] \\ 
T(6,7) & =  \left[ \begin {array}{ccccc} 0&0&p_{{1}}&0&0\\ \noalign{\medskip}p_{{3}}&0&0&p_{{2}}&0\\ \noalign{\medskip}0&p_{{4}}&0&0&p_{{3}}\\ \noalign{\medskip}0&0&0&0&0\end {array} \right] & 
 T(6,7) & =  \left[ \begin {array}{ccccc} p_{{2}}&0&0&p_{{1}}&0\\ \noalign{\medskip}0&p_{{3}}&0&0&p_{{2}}\\ \noalign{\medskip}0&0&p_{{4}}&0&0\\ \noalign{\medskip}p_{{5}}&0&0&0&0\end {array} \right] \\ 
T(7,7) & =  \left[ \begin {array}{ccccc} p_{{1}}&0&0&0&0\\ \noalign{\medskip}0&p_{{2}}&0&0&p_{{1}}\\ \noalign{\medskip}0&0&p_{{3}}&0&0\\ \noalign{\medskip}0&0&0&p_{{4}}&0\\ \noalign{\medskip}0&p_{{5}}&0&0&0\end {array} \right] & 
 T(7,7) & =  \left[ \begin {array}{ccccc} 0&p_{{1}}&0&0&0\\ \noalign{\medskip}0&0&p_{{2}}&0&0\\ \noalign{\medskip}p_{{4}}&0&0&p_{{3}}&0\\ \noalign{\medskip}0&0&0&0&p_{{4}}\\ \noalign{\medskip}0&0&p_{{5}}&0&0\end {array} \right] \\ 
T(7,7) & =  \left[ \begin {array}{ccccc} 0&0&p_{{1}}&0&0\\ \noalign{\medskip}p_{{3}}&0&0&p_{{2}}&0\\ \noalign{\medskip}0&p_{{4}}&0&0&p_{{3}}\\ \noalign{\medskip}0&0&0&0&0\\ \noalign{\medskip}0&0&0&p_{{5}}&0\end {array} \right] & 
 T(7,7) & =  \left[ \begin {array}{ccccc} p_{{2}}&0&0&p_{{1}}&0\\ \noalign{\medskip}0&p_{{3}}&0&0&p_{{2}}\\ \noalign{\medskip}0&0&p_{{4}}&0&0\\ \noalign{\medskip}p_{{5}}&0&0&0&0\\ \noalign{\medskip}0&0&0&0&p_{{5}}\end {array} \right] \\ 
T(8,8) & =  \left[ \begin {array}{cccc} p_{{2}}&0&0&p_{{1}}\\ \noalign{\medskip}0&p_{{3}}&0&0\\ \noalign{\medskip}0&0&p_{{4}}&0\\ \noalign{\medskip}p_{{5}}&0&0&0\end {array} \right] & 
 T(8,4) & =  \left[ \begin {array}{cccc} 0&p_{{2}}&0&0\\ \noalign{\medskip}p_{{4}}&0&p_{{3}}&0\\ \noalign{\medskip}0&0&0&p_{{4}}\\ \noalign{\medskip}0&p_{{5}}&0&0\end {array} \right] \\ 
T(8,5) & =  \left[ \begin {array}{cccc} p_{{3}}&0&p_{{2}}&0\\ \noalign{\medskip}0&p_{{4}}&0&p_{{3}}\\ \noalign{\medskip}0&0&0&0\\ \noalign{\medskip}0&0&p_{{5}}&0\end {array} \right] & 
 T(8,2) & =  \left[ \begin {array}{cccc} 0&p_{{3}}&0&p_{{2}}\\ \noalign{\medskip}0&0&p_{{4}}&0\\ \noalign{\medskip}p_{{5}}&0&0&0\\ \noalign{\medskip}0&0&0&p_{{5}}\end {array} \right] \\ 
T(9,9) & =  \left[ \begin {array}{ccc} p_{{2}}&0&p_{{1}}\\ \noalign{\medskip}0&p_{{3}}&0\\ \noalign{\medskip}p_{{5}}&0&0\end {array} \right] & 
 T(9,10) & =  \left[ \begin {array}{ccc} 0&p_{{2}}&0\\ \noalign{\medskip}p_{{4}}&0&p_{{3}}\\ \noalign{\medskip}0&p_{{5}}&0\end {array} \right] \\ 
T(9,5) & =  \left[ \begin {array}{cccc} p_{{3}}&0&p_{{2}}&0\\ \noalign{\medskip}0&p_{{4}}&0&p_{{3}}\\ \noalign{\medskip}0&0&p_{{5}}&0\end {array} \right] & 
 T(9,9) & =  \left[ \begin {array}{ccc} p_{{3}}&0&p_{{2}}\\ \noalign{\medskip}0&p_{{4}}&0\\ \noalign{\medskip}0&0&p_{{5}}\end {array} \right] \\ 
T(10,10) & =  \left[ \begin {array}{ccc} p_{{1}}&0&0\\ \noalign{\medskip}0&p_{{3}}&0\\ \noalign{\medskip}0&0&p_{{4}}\end {array} \right] & 
 T(10,5) & =  \left[ \begin {array}{cccc} 0&p_{{1}}&0&0\\ \noalign{\medskip}p_{{4}}&0&p_{{3}}&0\\ \noalign{\medskip}0&0&0&p_{{4}}\end {array} \right] \\ 
T(10,9) & =  \left[ \begin {array}{ccc} 0&p_{{1}}&0\\ \noalign{\medskip}p_{{4}}&0&p_{{3}}\\ \noalign{\medskip}0&0&0\end {array} \right] & 
 T(10,10) & =  \left[ \begin {array}{ccc} p_{{2}}&0&p_{{1}}\\ \noalign{\medskip}0&p_{{4}}&0\\ \noalign{\medskip}p_{{5}}&0&0\end {array} \right] \\ 
\end{align*}

There are two essential classes in this case.
The first is given by the reduced characteristic vector $7$ and the second 
    by the reduced characteristic vectors $5, 9$ and $10$.

There are also two non-essential maximal loop classes, given by the 
    reduced characteristic vectors $4$ and $8$ and by the reduced
    characteristic vector 2.

\section{Details for Example \ref{ex:golden} -- Part 1}
 
Consider $\varrho$, the root of $x^2+x-1$ and the maps $S_i(x) = \varrho x  + d_i$ with $d_{0} = 0$, $d_{1} = 1-\varrho$, and $d_{2} = 2-2 \varrho$.
This measure is of full support on $[0,2]$.
The probabilities are given by $p_{0} = 1/4$, $p_{1} = 1/2$, and $p_{2} = 1/4$.
We first give the information for this measure on $\BbR$.
The reduced transition diagram has 40 reduced characteristic vectors.
The reduced characteristic vectors are:
\begin{itemize}
\item Reduced characteristic vector 1: $(1, (0))$ 
\item Reduced characteristic vector 2: $(1/2 \varrho, (0))$ 
\item Reduced characteristic vector 3: $(1/2 \varrho, (0, 1/2 \varrho))$ 
\item Reduced characteristic vector 4: $(1-\varrho, (0, 1/2 \varrho, \varrho))$ 
\item Reduced characteristic vector 5: $(1/2 \varrho, (1-\varrho, 1-1/2 \varrho))$ 
\item Reduced characteristic vector 6: $(1/2 \varrho, (1-1/2 \varrho))$ 
\item Reduced characteristic vector 7: $(1/2-1/2 \varrho, (0, 1/2 \varrho))$ 
\item Reduced characteristic vector 8: $(-1/2+\varrho, (0, 1/2-1/2 \varrho, 1/2))$ 
\item Reduced characteristic vector 9: $(1/2-1/2 \varrho, (0, -1/2+\varrho, 1/2 \varrho, \varrho))$ 
\item Reduced characteristic vector 10: $(1/2-1/2 \varrho, (0, 1/2-1/2 \varrho, 1/2 \varrho, 1/2, 1/2+1/2 \varrho))$ 
\item Reduced characteristic vector 11: $(-1/2+\varrho, (0, 1/2-1/2 \varrho, 1-\varrho, 1/2, 1-1/2 \varrho))$ 
\item Reduced characteristic vector 12: $(1/2-1/2 \varrho, (0, -1/2+\varrho, 1/2 \varrho, 1/2, \varrho, 1/2+1/2 \varrho))$ 
\item Reduced characteristic vector 13: $(1/2-1/2 \varrho, (0, 1/2-1/2 \varrho, 1/2 \varrho, 1/2, 1-1/2 \varrho, 1/2+1/2 \varrho))$ 
\item Reduced characteristic vector 14: $(-1/2+\varrho, (1/2-1/2 \varrho, 1-\varrho, 1/2, 1-1/2 \varrho, 3/2-\varrho))$ 
\item Reduced characteristic vector 15: $(1/2-1/2 \varrho, (0, 1/2 \varrho, 1/2, \varrho, 1/2+1/2 \varrho))$ 
\item Reduced characteristic vector 16: $(1/2-1/2 \varrho, (1/2-1/2 \varrho, 1/2, 1-1/2 \varrho, 1/2+1/2 \varrho))$ 
\item Reduced characteristic vector 17: $(-1/2+\varrho, (1-\varrho, 1-1/2 \varrho, 3/2-\varrho))$ 
\item Reduced characteristic vector 18: $(1/2-1/2 \varrho, (1/2, 1/2+1/2 \varrho))$ 
\item Reduced characteristic vector 19: $(-1/2+\varrho, (0, 1/2-1/2 \varrho, 1/2 \varrho, 1/2, 1-1/2 \varrho, 1/2+1/2 \varrho))$ 
\item Reduced characteristic vector 20: $(1-3/2 \varrho, (0, -1/2+\varrho, 1/2 \varrho, -1/2+3/2 \varrho, \varrho, 1/2+1/2 \varrho, 3/2 \varrho))$ 
\item Reduced characteristic vector 21: $(-1/2+\varrho, (1-3/2 \varrho, 1/2-1/2 \varrho, 1-\varrho, 1/2, 1-1/2 \varrho, 3/2-\varrho))$ 
\item Reduced characteristic vector 22: $(1/2-1/2 \varrho, (0, 1/2-1/2 \varrho, 1/2 \varrho, 1/2, \varrho, 1/2+1/2 \varrho))$ 
\item Reduced characteristic vector 23: $(-1/2+\varrho, (0, 1/2-1/2 \varrho, 1-\varrho, 1/2, 1-1/2 \varrho, 1/2+1/2 \varrho))$ 
\item Reduced characteristic vector 24: $(1-3/2 \varrho, (0, -1/2+\varrho, 1/2 \varrho, 1/2, \varrho, 1/2+1/2 \varrho, 3/2 \varrho))$ 
\item Reduced characteristic vector 25: $(-1/2+\varrho, (1-3/2 \varrho, 1/2-1/2 \varrho, 1-\varrho, 3/2-3/2 \varrho, 1-1/2 \varrho, 3/2-\varrho))$ 
\item Reduced characteristic vector 26: $(-1/2+\varrho, (0, 1-3/2 \varrho, 1/2-1/2 \varrho, 1-\varrho, 1/2, 1-1/2 \varrho, 3/2-\varrho))$ 
\item Reduced characteristic vector 27: $(1/2-1/2 \varrho, (0, -1/2+\varrho, 1/2-1/2 \varrho, 1/2 \varrho, 1/2, \varrho, 1/2+1/2 \varrho))$ 
\item Reduced characteristic vector 28: $(-1/2+\varrho, (0, 1/2-1/2 \varrho, 1/2 \varrho, 1-\varrho, 1/2, 1-1/2 \varrho, 1/2+1/2 \varrho))$ 
\item Reduced characteristic vector 29: $(1-3/2 \varrho, (0, -1/2+\varrho, 1/2 \varrho, -1/2+3/2 \varrho, 1/2, \varrho, 1/2+1/2 \varrho, 3/2 \varrho))$ 
\item Reduced characteristic vector 30: $(-1/2+\varrho, (1-3/2 \varrho, 1/2-1/2 \varrho, 1-\varrho, 1/2, 3/2-3/2 \varrho, 1-1/2 \varrho, 3/2-\varrho))$ 
\item Reduced characteristic vector 31: $(1/2-1/2 \varrho, (0, 1/2-1/2 \varrho, 1/2 \varrho, 1/2, \varrho, 1-1/2 \varrho, 1/2+1/2 \varrho))$ 
\item Reduced characteristic vector 32: $(-1/2+\varrho, (0, 1/2-1/2 \varrho, 1-\varrho, 1/2, 1-1/2 \varrho, 1/2+1/2 \varrho, 3/2-\varrho))$ 
\item Reduced characteristic vector 33: $(-1/2+\varrho, (0, 1/2-1/2 \varrho, 1/2 \varrho, 1/2, \varrho, 1-1/2 \varrho, 1/2+1/2 \varrho))$ 
\item Reduced characteristic vector 34: $(1-3/2 \varrho, (0, -1/2+\varrho, 1/2 \varrho, -1/2+3/2 \varrho, \varrho, -1/2+2 \varrho, 1/2+1/2 \varrho, 3/2 \varrho))$ 
\item Reduced characteristic vector 35: $(-1/2+\varrho, (0, 1-3/2 \varrho, 1/2-1/2 \varrho, 1-\varrho, 1/2, 1-1/2 \varrho, 1/2+1/2 \varrho, 3/2-\varrho))$ 
\item Reduced characteristic vector 36: $(1-3/2 \varrho, (0, -1/2+\varrho, 1/2-1/2 \varrho, 1/2 \varrho, 1/2, \varrho, 1/2+1/2 \varrho, 3/2 \varrho))$ 
\item Reduced characteristic vector 37: $(-1/2+\varrho, (1-3/2 \varrho, 1/2-1/2 \varrho, -2 \varrho+3/2, 1-\varrho, 3/2-3/2 \varrho, 1-1/2 \varrho, 3/2-\varrho))$ 
\item Reduced characteristic vector 38: $(-1/2+\varrho, (0, 1-3/2 \varrho, 1/2-1/2 \varrho, 1-\varrho, 1/2, 3/2-3/2 \varrho, 1-1/2 \varrho, 3/2-\varrho))$ 
\item Reduced characteristic vector 39: $(1/2-1/2 \varrho, (0, -1/2+\varrho, 1/2-1/2 \varrho, 1/2 \varrho, 1/2, \varrho, 1-1/2 \varrho, 1/2+1/2 \varrho))$ 
\item Reduced characteristic vector 40: $(-1/2+\varrho, (0, 1/2-1/2 \varrho, 1/2 \varrho, 1-\varrho, 1/2, 1-1/2 \varrho, 1/2+1/2 \varrho, 3/2-\varrho))$ 
\end{itemize}
The maps are:
\begin{itemize}
\item RCV $1 \to [2, 3, 4, 5, 6]$
\item RCV $2 \to [2, 7]$
\item RCV $3 \to [8, 9, 10]$
\item RCV $4 \to [11, 12, 13, 14]$
\item RCV $5 \to [15, 16, 17]$
\item RCV $6 \to [18, 6]$
\item RCV $7 \to [8, 9]$
\item RCV $8 \to [10]$
\item RCV $9 \to [11, 12]$
\item RCV $10 \to [19, 20, 21]$
\item RCV $11 \to [22]$
\item RCV $12 \to [23, 24, 25]$
\item RCV $13 \to [19, 20, 21]$
\item RCV $14 \to [22]$
\item RCV $15 \to [23, 24, 25]$
\item RCV $16 \to [13, 14]$
\item RCV $17 \to [15]$
\item RCV $18 \to [16, 17]$
\item RCV $19 \to [19, 20]$
\item RCV $20 \to [26]$
\item RCV $21 \to [27]$
\item RCV $22 \to [28, 29, 30]$
\item RCV $23 \to [31]$
\item RCV $24 \to [32]$
\item RCV $25 \to [24, 25]$
\item RCV $26 \to [27]$
\item RCV $27 \to [28, 29, 30]$
\item RCV $28 \to [33, 34]$
\item RCV $29 \to [35]$
\item RCV $30 \to [36, 37]$
\item RCV $31 \to [28, 29, 30]$
\item RCV $32 \to [31]$
\item RCV $33 \to [28, 29]$
\item RCV $34 \to [38]$
\item RCV $35 \to [39]$
\item RCV $36 \to [40]$
\item RCV $37 \to [29, 30]$
\item RCV $38 \to [36, 37]$
\item RCV $39 \to [28, 29, 30]$
\item RCV $40 \to [33, 34]$
\end{itemize}

See Figure \ref{fig:ArXPic2} for the transition diagram.
\begin{figure}[tbp]
\includegraphics[scale=0.4]{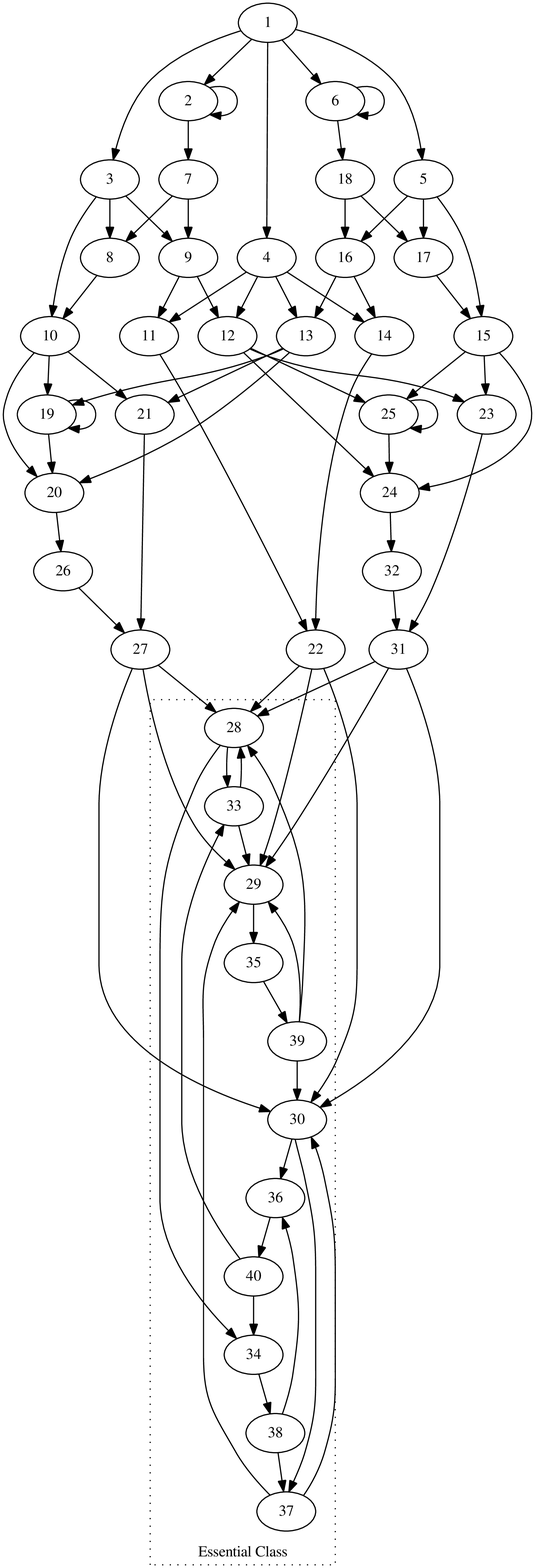}
\caption{$x^2+x-1$ with $d_i \in [0, 1-\varrho, 2-2 \varrho]$, Full set + Essential class}
\label{fig:ArXPic2}
\end{figure}
This has (normalized) transition matrices:
\begin{align*}
T(1,2) & =  \left[ \begin {array}{c} 1\end {array} \right] & 
 T(1,3) & =  \left[ \begin {array}{cc} 2&1\end {array} \right] \\ 
T(1,4) & =  \left[ \begin {array}{ccc} 1&2&1\end {array} \right] & 
 T(1,5) & =  \left[ \begin {array}{cc} 1&2\end {array} \right] \\ 
T(1,6) & =  \left[ \begin {array}{c} 1\end {array} \right] & 
 T(2,2) & =  \left[ \begin {array}{c} 1\end {array} \right] \\ 
T(2,7) & =  \left[ \begin {array}{cc} 2&1\end {array} \right] & 
 T(3,8) & =  \left[ \begin {array}{ccc} 1&0&0\\ \noalign{\medskip}0&2&1\end {array} \right] \\ 
T(3,9) & =  \left[ \begin {array}{cccc} 0&1&0&0\\ \noalign{\medskip}1&0&2&1\end {array} \right] & 
 T(3,10) & =  \left[ \begin {array}{ccccc} 2&0&1&0&0\\ \noalign{\medskip}0&1&0&2&1\end {array} \right] \\ 
T(4,11) & =  \left[ \begin {array}{ccccc} 1&0&0&0&0\\ \noalign{\medskip}0&2&0&1&0\\ \noalign{\medskip}0&0&1&0&2\end {array} \right] & 
 T(4,12) & =  \left[ \begin {array}{cccccc} 0&1&0&0&0&0\\ \noalign{\medskip}1&0&2&0&1&0\\ \noalign{\medskip}0&0&0&1&0&2\end {array} \right] \\ 
T(4,13) & =  \left[ \begin {array}{cccccc} 2&0&1&0&0&0\\ \noalign{\medskip}0&1&0&2&0&1\\ \noalign{\medskip}0&0&0&0&1&0\end {array} \right] & 
 T(4,14) & =  \left[ \begin {array}{ccccc} 2&0&1&0&0\\ \noalign{\medskip}0&1&0&2&0\\ \noalign{\medskip}0&0&0&0&1\end {array} \right] \\ 
T(5,15) & =  \left[ \begin {array}{ccccc} 1&2&0&1&0\\ \noalign{\medskip}0&0&1&0&2\end {array} \right] & 
 T(5,16) & =  \left[ \begin {array}{cccc} 1&2&0&1\\ \noalign{\medskip}0&0&1&0\end {array} \right] \\ 
T(5,17) & =  \left[ \begin {array}{ccc} 1&2&0\\ \noalign{\medskip}0&0&1\end {array} \right] & 
 T(6,18) & =  \left[ \begin {array}{cc} 1&2\end {array} \right] \\ 
T(6,6) & =  \left[ \begin {array}{c} 1\end {array} \right] & 
 T(7,8) & =  \left[ \begin {array}{ccc} 1&0&0\\ \noalign{\medskip}0&2&1\end {array} \right] \\ 
T(7,9) & =  \left[ \begin {array}{cccc} 0&1&0&0\\ \noalign{\medskip}1&0&2&1\end {array} \right] & 
 T(8,10) & =  \left[ \begin {array}{ccccc} 1&0&0&0&0\\ \noalign{\medskip}2&0&1&0&0\\ \noalign{\medskip}0&1&0&2&1\end {array} \right] \\ 
T(9,11) & =  \left[ \begin {array}{ccccc} 1&0&0&0&0\\ \noalign{\medskip}0&1&0&0&0\\ \noalign{\medskip}0&2&0&1&0\\ \noalign{\medskip}0&0&1&0&2\end {array} \right] & 
 T(9,12) & =  \left[ \begin {array}{cccccc} 0&1&0&0&0&0\\ \noalign{\medskip}2&0&1&0&0&0\\ \noalign{\medskip}1&0&2&0&1&0\\ \noalign{\medskip}0&0&0&1&0&2\end {array} \right] \\ 
T(10,19) & =  \left[ \begin {array}{cccccc} 1&0&0&0&0&0\\ \noalign{\medskip}2&0&1&0&0&0\\ \noalign{\medskip}0&2&0&1&0&0\\ \noalign{\medskip}0&1&0&2&0&1\\ \noalign{\medskip}0&0&0&0&1&0\end {array} \right] & 
 T(10,20) & =  \left[ \begin {array}{ccccccc} 0&1&0&0&0&0&0\\ \noalign{\medskip}0&2&0&1&0&0&0\\ \noalign{\medskip}1&0&2&0&1&0&0\\ \noalign{\medskip}0&0&1&0&2&0&1\\ \noalign{\medskip}0&0&0&0&0&1&0\end {array} \right] \\ 
T(10,21) & =  \left[ \begin {array}{cccccc} 0&1&0&0&0&0\\ \noalign{\medskip}0&2&0&1&0&0\\ \noalign{\medskip}1&0&2&0&1&0\\ \noalign{\medskip}0&0&1&0&2&0\\ \noalign{\medskip}0&0&0&0&0&1\end {array} \right] & 
 T(11,22) & =  \left[ \begin {array}{cccccc} 1&0&0&0&0&0\\ \noalign{\medskip}2&0&1&0&0&0\\ \noalign{\medskip}1&0&2&0&1&0\\ \noalign{\medskip}0&1&0&2&0&1\\ \noalign{\medskip}0&0&0&1&0&2\end {array} \right] \\ 
T(12,23) & =  \left[ \begin {array}{cccccc} 1&0&0&0&0&0\\ \noalign{\medskip}0&1&0&0&0&0\\ \noalign{\medskip}0&2&0&1&0&0\\ \noalign{\medskip}0&1&0&2&0&1\\ \noalign{\medskip}0&0&1&0&2&0\\ \noalign{\medskip}0&0&0&0&1&0\end {array} \right] \\ 
T(12,24) & =  \left[ \begin {array}{ccccccc} 0&1&0&0&0&0&0\\ \noalign{\medskip}2&0&1&0&0&0&0\\ \noalign{\medskip}1&0&2&0&1&0&0\\ \noalign{\medskip}0&0&1&0&2&0&1\\ \noalign{\medskip}0&0&0&1&0&2&0\\ \noalign{\medskip}0&0&0&0&0&1&0\end {array} \right] \\ 
T(12,25) & =  \left[ \begin {array}{cccccc} 0&1&0&0&0&0\\ \noalign{\medskip}2&0&1&0&0&0\\ \noalign{\medskip}1&0&2&0&1&0\\ \noalign{\medskip}0&0&1&0&2&0\\ \noalign{\medskip}0&0&0&1&0&2\\ \noalign{\medskip}0&0&0&0&0&1\end {array} \right] \\ 
T(13,19) & =  \left[ \begin {array}{cccccc} 1&0&0&0&0&0\\ \noalign{\medskip}2&0&1&0&0&0\\ \noalign{\medskip}0&2&0&1&0&0\\ \noalign{\medskip}0&1&0&2&0&1\\ \noalign{\medskip}0&0&0&1&0&2\\ \noalign{\medskip}0&0&0&0&1&0\end {array} \right] \\ 
T(13,20) & =  \left[ \begin {array}{ccccccc} 0&1&0&0&0&0&0\\ \noalign{\medskip}0&2&0&1&0&0&0\\ \noalign{\medskip}1&0&2&0&1&0&0\\ \noalign{\medskip}0&0&1&0&2&0&1\\ \noalign{\medskip}0&0&0&0&1&0&2\\ \noalign{\medskip}0&0&0&0&0&1&0\end {array} \right] \\ 
T(13,21) & =  \left[ \begin {array}{cccccc} 0&1&0&0&0&0\\ \noalign{\medskip}0&2&0&1&0&0\\ \noalign{\medskip}1&0&2&0&1&0\\ \noalign{\medskip}0&0&1&0&2&0\\ \noalign{\medskip}0&0&0&0&1&0\\ \noalign{\medskip}0&0&0&0&0&1\end {array} \right] \\ 
T(14,22) & =  \left[ \begin {array}{cccccc} 2&0&1&0&0&0\\ \noalign{\medskip}1&0&2&0&1&0\\ \noalign{\medskip}0&1&0&2&0&1\\ \noalign{\medskip}0&0&0&1&0&2\\ \noalign{\medskip}0&0&0&0&0&1\end {array} \right] & 
 T(15,23) & =  \left[ \begin {array}{cccccc} 1&0&0&0&0&0\\ \noalign{\medskip}0&2&0&1&0&0\\ \noalign{\medskip}0&1&0&2&0&1\\ \noalign{\medskip}0&0&1&0&2&0\\ \noalign{\medskip}0&0&0&0&1&0\end {array} \right] \\ 
T(15,24) & =  \left[ \begin {array}{ccccccc} 0&1&0&0&0&0&0\\ \noalign{\medskip}1&0&2&0&1&0&0\\ \noalign{\medskip}0&0&1&0&2&0&1\\ \noalign{\medskip}0&0&0&1&0&2&0\\ \noalign{\medskip}0&0&0&0&0&1&0\end {array} \right] & 
 T(15,25) & =  \left[ \begin {array}{cccccc} 0&1&0&0&0&0\\ \noalign{\medskip}1&0&2&0&1&0\\ \noalign{\medskip}0&0&1&0&2&0\\ \noalign{\medskip}0&0&0&1&0&2\\ \noalign{\medskip}0&0&0&0&0&1\end {array} \right] \\ 
T(16,13) & =  \left[ \begin {array}{cccccc} 2&0&1&0&0&0\\ \noalign{\medskip}0&1&0&2&0&1\\ \noalign{\medskip}0&0&0&1&0&2\\ \noalign{\medskip}0&0&0&0&1&0\end {array} \right] & 
 T(16,14) & =  \left[ \begin {array}{ccccc} 2&0&1&0&0\\ \noalign{\medskip}0&1&0&2&0\\ \noalign{\medskip}0&0&0&1&0\\ \noalign{\medskip}0&0&0&0&1\end {array} \right] \\ 
T(17,15) & =  \left[ \begin {array}{ccccc} 1&2&0&1&0\\ \noalign{\medskip}0&0&1&0&2\\ \noalign{\medskip}0&0&0&0&1\end {array} \right] & 
 T(18,16) & =  \left[ \begin {array}{cccc} 1&2&0&1\\ \noalign{\medskip}0&0&1&0\end {array} \right] \\ 
T(18,17) & =  \left[ \begin {array}{ccc} 1&2&0\\ \noalign{\medskip}0&0&1\end {array} \right] & 
 T(19,19) & =  \left[ \begin {array}{cccccc} 1&0&0&0&0&0\\ \noalign{\medskip}2&0&1&0&0&0\\ \noalign{\medskip}0&2&0&1&0&0\\ \noalign{\medskip}0&1&0&2&0&1\\ \noalign{\medskip}0&0&0&1&0&2\\ \noalign{\medskip}0&0&0&0&1&0\end {array} \right] \\ 
T(19,20) & =  \left[ \begin {array}{ccccccc} 0&1&0&0&0&0&0\\ \noalign{\medskip}0&2&0&1&0&0&0\\ \noalign{\medskip}1&0&2&0&1&0&0\\ \noalign{\medskip}0&0&1&0&2&0&1\\ \noalign{\medskip}0&0&0&0&1&0&2\\ \noalign{\medskip}0&0&0&0&0&1&0\end {array} \right] \\ 
T(20,26) & =  \left[ \begin {array}{ccccccc} 1&0&0&0&0&0&0\\ \noalign{\medskip}0&0&1&0&0&0&0\\ \noalign{\medskip}0&0&2&0&1&0&0\\ \noalign{\medskip}0&1&0&2&0&1&0\\ \noalign{\medskip}0&0&0&1&0&2&0\\ \noalign{\medskip}0&0&0&0&0&1&0\\ \noalign{\medskip}0&0&0&0&0&0&1\end {array} \right] \\ 
T(21,27) & =  \left[ \begin {array}{ccccccc} 0&1&0&0&0&0&0\\ \noalign{\medskip}2&0&0&1&0&0&0\\ \noalign{\medskip}1&0&0&2&0&1&0\\ \noalign{\medskip}0&0&1&0&2&0&1\\ \noalign{\medskip}0&0&0&0&1&0&2\\ \noalign{\medskip}0&0&0&0&0&0&1\end {array} \right] \\ 
T(22,28) & =  \left[ \begin {array}{ccccccc} 1&0&0&0&0&0&0\\ \noalign{\medskip}2&0&1&0&0&0&0\\ \noalign{\medskip}0&2&0&0&1&0&0\\ \noalign{\medskip}0&1&0&0&2&0&1\\ \noalign{\medskip}0&0&0&1&0&2&0\\ \noalign{\medskip}0&0&0&0&0&1&0\end {array} \right] \\ 
T(22,29) & =  \left[ \begin {array}{cccccccc} 0&1&0&0&0&0&0&0\\ \noalign{\medskip}0&2&0&1&0&0&0&0\\ \noalign{\medskip}1&0&2&0&0&1&0&0\\ \noalign{\medskip}0&0&1&0&0&2&0&1\\ \noalign{\medskip}0&0&0&0&1&0&2&0\\ \noalign{\medskip}0&0&0&0&0&0&1&0\end {array} \right] \\ 
T(22,30) & =  \left[ \begin {array}{ccccccc} 0&1&0&0&0&0&0\\ \noalign{\medskip}0&2&0&1&0&0&0\\ \noalign{\medskip}1&0&2&0&0&1&0\\ \noalign{\medskip}0&0&1&0&0&2&0\\ \noalign{\medskip}0&0&0&0&1&0&2\\ \noalign{\medskip}0&0&0&0&0&0&1\end {array} \right] \\ 
T(23,31) & =  \left[ \begin {array}{ccccccc} 1&0&0&0&0&0&0\\ \noalign{\medskip}2&0&1&0&0&0&0\\ \noalign{\medskip}1&0&2&0&1&0&0\\ \noalign{\medskip}0&1&0&2&0&0&1\\ \noalign{\medskip}0&0&0&1&0&0&2\\ \noalign{\medskip}0&0&0&0&0&1&0\end {array} \right] \\ 
T(24,32) & =  \left[ \begin {array}{ccccccc} 1&0&0&0&0&0&0\\ \noalign{\medskip}0&1&0&0&0&0&0\\ \noalign{\medskip}0&2&0&1&0&0&0\\ \noalign{\medskip}0&1&0&2&0&1&0\\ \noalign{\medskip}0&0&1&0&2&0&0\\ \noalign{\medskip}0&0&0&0&1&0&0\\ \noalign{\medskip}0&0&0&0&0&0&1\end {array} \right] \\ 
T(25,24) & =  \left[ \begin {array}{ccccccc} 0&1&0&0&0&0&0\\ \noalign{\medskip}2&0&1&0&0&0&0\\ \noalign{\medskip}1&0&2&0&1&0&0\\ \noalign{\medskip}0&0&1&0&2&0&1\\ \noalign{\medskip}0&0&0&1&0&2&0\\ \noalign{\medskip}0&0&0&0&0&1&0\end {array} \right] \\ 
T(25,25) & =  \left[ \begin {array}{cccccc} 0&1&0&0&0&0\\ \noalign{\medskip}2&0&1&0&0&0\\ \noalign{\medskip}1&0&2&0&1&0\\ \noalign{\medskip}0&0&1&0&2&0\\ \noalign{\medskip}0&0&0&1&0&2\\ \noalign{\medskip}0&0&0&0&0&1\end {array} \right] \\ 
T(26,27) & =  \left[ \begin {array}{ccccccc} 1&0&0&0&0&0&0\\ \noalign{\medskip}0&1&0&0&0&0&0\\ \noalign{\medskip}2&0&0&1&0&0&0\\ \noalign{\medskip}1&0&0&2&0&1&0\\ \noalign{\medskip}0&0&1&0&2&0&1\\ \noalign{\medskip}0&0&0&0&1&0&2\\ \noalign{\medskip}0&0&0&0&0&0&1\end {array} \right] \\ 
T(27,28) & =  \left[ \begin {array}{ccccccc} 1&0&0&0&0&0&0\\ \noalign{\medskip}0&1&0&0&0&0&0\\ \noalign{\medskip}2&0&1&0&0&0&0\\ \noalign{\medskip}0&2&0&0&1&0&0\\ \noalign{\medskip}0&1&0&0&2&0&1\\ \noalign{\medskip}0&0&0&1&0&2&0\\ \noalign{\medskip}0&0&0&0&0&1&0\end {array} \right] \\ 
T(27,29) & =  \left[ \begin {array}{cccccccc} 0&1&0&0&0&0&0&0\\ \noalign{\medskip}2&0&1&0&0&0&0&0\\ \noalign{\medskip}0&2&0&1&0&0&0&0\\ \noalign{\medskip}1&0&2&0&0&1&0&0\\ \noalign{\medskip}0&0&1&0&0&2&0&1\\ \noalign{\medskip}0&0&0&0&1&0&2&0\\ \noalign{\medskip}0&0&0&0&0&0&1&0\end {array} \right] \\ 
T(27,30) & =  \left[ \begin {array}{ccccccc} 0&1&0&0&0&0&0\\ \noalign{\medskip}2&0&1&0&0&0&0\\ \noalign{\medskip}0&2&0&1&0&0&0\\ \noalign{\medskip}1&0&2&0&0&1&0\\ \noalign{\medskip}0&0&1&0&0&2&0\\ \noalign{\medskip}0&0&0&0&1&0&2\\ \noalign{\medskip}0&0&0&0&0&0&1\end {array} \right] \\ 
T(28,33) & =  \left[ \begin {array}{ccccccc} 1&0&0&0&0&0&0\\ \noalign{\medskip}2&0&1&0&0&0&0\\ \noalign{\medskip}0&2&0&1&0&0&0\\ \noalign{\medskip}1&0&2&0&1&0&0\\ \noalign{\medskip}0&1&0&2&0&0&1\\ \noalign{\medskip}0&0&0&1&0&0&2\\ \noalign{\medskip}0&0&0&0&0&1&0\end {array} \right] \\ 
T(28,34) & =  \left[ \begin {array}{cccccccc} 0&1&0&0&0&0&0&0\\ \noalign{\medskip}0&2&0&1&0&0&0&0\\ \noalign{\medskip}1&0&2&0&1&0&0&0\\ \noalign{\medskip}0&1&0&2&0&1&0&0\\ \noalign{\medskip}0&0&1&0&2&0&0&1\\ \noalign{\medskip}0&0&0&0&1&0&0&2\\ \noalign{\medskip}0&0&0&0&0&0&1&0\end {array} \right] \\ 
T(29,35) & =  \left[ \begin {array}{cccccccc} 1&0&0&0&0&0&0&0\\ \noalign{\medskip}0&0&1&0&0&0&0&0\\ \noalign{\medskip}0&0&2&0&1&0&0&0\\ \noalign{\medskip}0&1&0&2&0&1&0&0\\ \noalign{\medskip}0&0&1&0&2&0&1&0\\ \noalign{\medskip}0&0&0&1&0&2&0&0\\ \noalign{\medskip}0&0&0&0&0&1&0&0\\ \noalign{\medskip}0&0&0&0&0&0&0&1\end {array} \right] \\ 
T(30,36) & =  \left[ \begin {array}{cccccccc} 0&1&0&0&0&0&0&0\\ \noalign{\medskip}2&0&0&1&0&0&0&0\\ \noalign{\medskip}1&0&0&2&0&1&0&0\\ \noalign{\medskip}0&0&1&0&2&0&1&0\\ \noalign{\medskip}0&0&0&1&0&2&0&1\\ \noalign{\medskip}0&0&0&0&1&0&2&0\\ \noalign{\medskip}0&0&0&0&0&0&1&0\end {array} \right] \\ 
T(30,37) & =  \left[ \begin {array}{ccccccc} 0&1&0&0&0&0&0\\ \noalign{\medskip}2&0&0&1&0&0&0\\ \noalign{\medskip}1&0&0&2&0&1&0\\ \noalign{\medskip}0&0&1&0&2&0&1\\ \noalign{\medskip}0&0&0&1&0&2&0\\ \noalign{\medskip}0&0&0&0&1&0&2\\ \noalign{\medskip}0&0&0&0&0&0&1\end {array} \right] \\ 
T(31,28) & =  \left[ \begin {array}{ccccccc} 1&0&0&0&0&0&0\\ \noalign{\medskip}2&0&1&0&0&0&0\\ \noalign{\medskip}0&2&0&0&1&0&0\\ \noalign{\medskip}0&1&0&0&2&0&1\\ \noalign{\medskip}0&0&0&1&0&2&0\\ \noalign{\medskip}0&0&0&0&1&0&2\\ \noalign{\medskip}0&0&0&0&0&1&0\end {array} \right] \\ 
T(31,29) & =  \left[ \begin {array}{cccccccc} 0&1&0&0&0&0&0&0\\ \noalign{\medskip}0&2&0&1&0&0&0&0\\ \noalign{\medskip}1&0&2&0&0&1&0&0\\ \noalign{\medskip}0&0&1&0&0&2&0&1\\ \noalign{\medskip}0&0&0&0&1&0&2&0\\ \noalign{\medskip}0&0&0&0&0&1&0&2\\ \noalign{\medskip}0&0&0&0&0&0&1&0\end {array} \right] \\ 
T(31,30) & =  \left[ \begin {array}{ccccccc} 0&1&0&0&0&0&0\\ \noalign{\medskip}0&2&0&1&0&0&0\\ \noalign{\medskip}1&0&2&0&0&1&0\\ \noalign{\medskip}0&0&1&0&0&2&0\\ \noalign{\medskip}0&0&0&0&1&0&2\\ \noalign{\medskip}0&0&0&0&0&1&0\\ \noalign{\medskip}0&0&0&0&0&0&1\end {array} \right] \\ 
T(32,31) & =  \left[ \begin {array}{ccccccc} 1&0&0&0&0&0&0\\ \noalign{\medskip}2&0&1&0&0&0&0\\ \noalign{\medskip}1&0&2&0&1&0&0\\ \noalign{\medskip}0&1&0&2&0&0&1\\ \noalign{\medskip}0&0&0&1&0&0&2\\ \noalign{\medskip}0&0&0&0&0&1&0\\ \noalign{\medskip}0&0&0&0&0&0&1\end {array} \right] \\ 
T(33,28) & =  \left[ \begin {array}{ccccccc} 1&0&0&0&0&0&0\\ \noalign{\medskip}2&0&1&0&0&0&0\\ \noalign{\medskip}0&2&0&0&1&0&0\\ \noalign{\medskip}0&1&0&0&2&0&1\\ \noalign{\medskip}0&0&0&1&0&2&0\\ \noalign{\medskip}0&0&0&0&1&0&2\\ \noalign{\medskip}0&0&0&0&0&1&0\end {array} \right] \\ 
T(33,29) & =  \left[ \begin {array}{cccccccc} 0&1&0&0&0&0&0&0\\ \noalign{\medskip}0&2&0&1&0&0&0&0\\ \noalign{\medskip}1&0&2&0&0&1&0&0\\ \noalign{\medskip}0&0&1&0&0&2&0&1\\ \noalign{\medskip}0&0&0&0&1&0&2&0\\ \noalign{\medskip}0&0&0&0&0&1&0&2\\ \noalign{\medskip}0&0&0&0&0&0&1&0\end {array} \right] \\ 
T(34,38) & =  \left[ \begin {array}{cccccccc} 1&0&0&0&0&0&0&0\\ \noalign{\medskip}0&0&1&0&0&0&0&0\\ \noalign{\medskip}0&0&2&0&1&0&0&0\\ \noalign{\medskip}0&1&0&2&0&0&1&0\\ \noalign{\medskip}0&0&0&1&0&0&2&0\\ \noalign{\medskip}0&0&0&0&0&1&0&2\\ \noalign{\medskip}0&0&0&0&0&0&1&0\\ \noalign{\medskip}0&0&0&0&0&0&0&1\end {array} \right] \\ 
T(35,39) & =  \left[ \begin {array}{cccccccc} 1&0&0&0&0&0&0&0\\ \noalign{\medskip}0&1&0&0&0&0&0&0\\ \noalign{\medskip}2&0&0&1&0&0&0&0\\ \noalign{\medskip}1&0&0&2&0&1&0&0\\ \noalign{\medskip}0&0&1&0&2&0&0&1\\ \noalign{\medskip}0&0&0&0&1&0&0&2\\ \noalign{\medskip}0&0&0&0&0&0&1&0\\ \noalign{\medskip}0&0&0&0&0&0&0&1\end {array} \right] \\ 
T(36,40) & =  \left[ \begin {array}{cccccccc} 1&0&0&0&0&0&0&0\\ \noalign{\medskip}0&1&0&0&0&0&0&0\\ \noalign{\medskip}2&0&1&0&0&0&0&0\\ \noalign{\medskip}0&2&0&0&1&0&0&0\\ \noalign{\medskip}0&1&0&0&2&0&1&0\\ \noalign{\medskip}0&0&0&1&0&2&0&0\\ \noalign{\medskip}0&0&0&0&0&1&0&0\\ \noalign{\medskip}0&0&0&0&0&0&0&1\end {array} \right] \\ 
T(37,29) & =  \left[ \begin {array}{cccccccc} 0&1&0&0&0&0&0&0\\ \noalign{\medskip}2&0&1&0&0&0&0&0\\ \noalign{\medskip}0&2&0&1&0&0&0&0\\ \noalign{\medskip}1&0&2&0&0&1&0&0\\ \noalign{\medskip}0&0&1&0&0&2&0&1\\ \noalign{\medskip}0&0&0&0&1&0&2&0\\ \noalign{\medskip}0&0&0&0&0&0&1&0\end {array} \right] \\ 
T(37,30) & =  \left[ \begin {array}{ccccccc} 0&1&0&0&0&0&0\\ \noalign{\medskip}2&0&1&0&0&0&0\\ \noalign{\medskip}0&2&0&1&0&0&0\\ \noalign{\medskip}1&0&2&0&0&1&0\\ \noalign{\medskip}0&0&1&0&0&2&0\\ \noalign{\medskip}0&0&0&0&1&0&2\\ \noalign{\medskip}0&0&0&0&0&0&1\end {array} \right] \\ 
T(38,36) & =  \left[ \begin {array}{cccccccc} 1&0&0&0&0&0&0&0\\ \noalign{\medskip}0&1&0&0&0&0&0&0\\ \noalign{\medskip}2&0&0&1&0&0&0&0\\ \noalign{\medskip}1&0&0&2&0&1&0&0\\ \noalign{\medskip}0&0&1&0&2&0&1&0\\ \noalign{\medskip}0&0&0&1&0&2&0&1\\ \noalign{\medskip}0&0&0&0&1&0&2&0\\ \noalign{\medskip}0&0&0&0&0&0&1&0\end {array} \right] \\ 
T(38,37) & =  \left[ \begin {array}{ccccccc} 1&0&0&0&0&0&0\\ \noalign{\medskip}0&1&0&0&0&0&0\\ \noalign{\medskip}2&0&0&1&0&0&0\\ \noalign{\medskip}1&0&0&2&0&1&0\\ \noalign{\medskip}0&0&1&0&2&0&1\\ \noalign{\medskip}0&0&0&1&0&2&0\\ \noalign{\medskip}0&0&0&0&1&0&2\\ \noalign{\medskip}0&0&0&0&0&0&1\end {array} \right] \\ 
T(39,28) & =  \left[ \begin {array}{ccccccc} 1&0&0&0&0&0&0\\ \noalign{\medskip}0&1&0&0&0&0&0\\ \noalign{\medskip}2&0&1&0&0&0&0\\ \noalign{\medskip}0&2&0&0&1&0&0\\ \noalign{\medskip}0&1&0&0&2&0&1\\ \noalign{\medskip}0&0&0&1&0&2&0\\ \noalign{\medskip}0&0&0&0&1&0&2\\ \noalign{\medskip}0&0&0&0&0&1&0\end {array} \right] \\ 
T(39,29) & =  \left[ \begin {array}{cccccccc} 0&1&0&0&0&0&0&0\\ \noalign{\medskip}2&0&1&0&0&0&0&0\\ \noalign{\medskip}0&2&0&1&0&0&0&0\\ \noalign{\medskip}1&0&2&0&0&1&0&0\\ \noalign{\medskip}0&0&1&0&0&2&0&1\\ \noalign{\medskip}0&0&0&0&1&0&2&0\\ \noalign{\medskip}0&0&0&0&0&1&0&2\\ \noalign{\medskip}0&0&0&0&0&0&1&0\end {array} \right] \\ 
T(39,30) & =  \left[ \begin {array}{ccccccc} 0&1&0&0&0&0&0\\ \noalign{\medskip}2&0&1&0&0&0&0\\ \noalign{\medskip}0&2&0&1&0&0&0\\ \noalign{\medskip}1&0&2&0&0&1&0\\ \noalign{\medskip}0&0&1&0&0&2&0\\ \noalign{\medskip}0&0&0&0&1&0&2\\ \noalign{\medskip}0&0&0&0&0&1&0\\ \noalign{\medskip}0&0&0&0&0&0&1\end {array} \right] \\ 
T(40,33) & =  \left[ \begin {array}{ccccccc} 1&0&0&0&0&0&0\\ \noalign{\medskip}2&0&1&0&0&0&0\\ \noalign{\medskip}0&2&0&1&0&0&0\\ \noalign{\medskip}1&0&2&0&1&0&0\\ \noalign{\medskip}0&1&0&2&0&0&1\\ \noalign{\medskip}0&0&0&1&0&0&2\\ \noalign{\medskip}0&0&0&0&0&1&0\\ \noalign{\medskip}0&0&0&0&0&0&1\end {array} \right] \\ 
T(40,34) & =  \left[ \begin {array}{cccccccc} 0&1&0&0&0&0&0&0\\ \noalign{\medskip}0&2&0&1&0&0&0&0\\ \noalign{\medskip}1&0&2&0&1&0&0&0\\ \noalign{\medskip}0&1&0&2&0&1&0&0\\ \noalign{\medskip}0&0&1&0&2&0&0&1\\ \noalign{\medskip}0&0&0&0&1&0&0&2\\ \noalign{\medskip}0&0&0&0&0&0&1&0\\ \noalign{\medskip}0&0&0&0&0&0&0&1\end {array} \right] \\ 
\end{align*}

The essential class is: [28, 29, 30, 33, 34, 35, 36, 37, 38, 39, 40].
The essential class is of positive type.
An example is the path [28, 34, 38, 36, 40, 34, 38, 37].
The essential class is not a simple loop.
This spectral range will include the interval $[2.469158042, 2.481194304]$.
The minimum comes from the loop $[29, 35, 39, 29]$.
The maximim comes from the loop $[28, 33, 28]$.
These points will include points of local dimension [.992399434, 1.002504754].
The Spectral Range is contained in the range $[2.038390910, 2.701372314]$.
The minimum comes from the column sub-norm on the subset ${{3, 4}}$ of length 20. 
The maximum comes from the total column sup-norm of length 10. 
These points will have local dimension contained in [.815720713, 1.400908289].

There are 4 additional maximal loops.

Maximal Loop Class: [25].
The maximal loop class is a simple loop.
It's spectral radius is an isolated points of 2.481194304.
These points have local dimension .992399434.

Maximal Loop Class: [19].
The maximal loop class is a simple loop.
It's spectral radius is an isolated points of 2.481194304.
These points have local dimension .992399434.

Maximal Loop Class: [6].
The maximal loop class is a simple loop.
It's spectral radius is an isolated points of 1.
These points have local dimension 2.880840181.

Maximal Loop Class: [2].
The maximal loop class is a simple loop.
It's spectral radius is an isolated points of 1.
These points have local dimension 2.880840181.

The set of local dimensions will contain
\[[.9924,1.003]\cup \{ 2.881\}\]This has 2 components. 

The set of local dimensions is contained in
 \[[.8157,1.401]\cup \{ 2.881\}\]This has 2 components.

\section{Details for Example \ref{ex:golden} -- Part 2}
 
Consider $\varrho$, the root of $x^2+x-1$ and the maps $S_i(x) = \varrho x  + d_i$ with $d_{0} = 0$, $d_{1} = 1-\varrho$, and $d_{2} = 2-2 \varrho$.
This measure is of full support on $[0,2]$.
The probabilities are given by $p_{0} = 1/4$, $p_{1} = 1/2$, and $p_{2} = 1/4$.
We now give detailed information for the quotient measure.
The reduced transition diagram has 38 reduced characteristic vectors.
The reduced characteristic vectors are:
\begin{itemize}
\item Reduced characteristic vector 1: $(1, (0, 1))$ 
\item Reduced characteristic vector 2: $(1-\varrho, (0, 1-\varrho, 1, \varrho+1))$ 
\item Reduced characteristic vector 3: $(-1+2 \varrho, (1-\varrho, 2-2 \varrho, 2-\varrho))$ 
\item Reduced characteristic vector 4: $(1-\varrho, (0, \varrho, 1, \varrho+1))$ 
\item Reduced characteristic vector 5: $(-1+2 \varrho, (1-\varrho, 1, 2-\varrho))$ 
\item Reduced characteristic vector 6: $(1-\varrho, (0, \varrho, 2 \varrho, \varrho+1))$ 
\item Reduced characteristic vector 7: $(1-\varrho, (0, 1-\varrho, \varrho, 1, 2-\varrho, \varrho+1))$ 
\item Reduced characteristic vector 8: $(-1+2 \varrho, (1-\varrho, 2-2 \varrho, 1, 2-\varrho, 3-2 \varrho))$ 
\item Reduced characteristic vector 9: $(1-\varrho, (0, \varrho, 1, 2 \varrho, \varrho+1))$ 
\item Reduced characteristic vector 10: $(-1+2 \varrho, (0, 1-\varrho, 1, 2-\varrho, \varrho+1))$ 
\item Reduced characteristic vector 11: $(-3 \varrho+2, (0, -1+2 \varrho, \varrho, 2 \varrho, \varrho+1, 3 \varrho))$ 
\item Reduced characteristic vector 12: $(-1+2 \varrho, (-3 \varrho+2, 1-\varrho, 2-2 \varrho, 2-\varrho, 3-2 \varrho))$ 
\item Reduced characteristic vector 13: $(1-\varrho, (0, 1-\varrho, \varrho, 1, \varrho+1))$ 
\item Reduced characteristic vector 14: $(-1+2 \varrho, (0, 1-\varrho, 2-2 \varrho, 1, 2-\varrho))$ 
\item Reduced characteristic vector 15: $(1-\varrho, (0, -1+2 \varrho, \varrho, 1, 2 \varrho, \varrho+1))$ 
\item Reduced characteristic vector 16: $(-1+2 \varrho, (0, 1-\varrho, \varrho, 1, 2-\varrho, \varrho+1))$ 
\item Reduced characteristic vector 17: $(-3 \varrho+2, (0, -1+2 \varrho, \varrho, -1+3 \varrho, 2 \varrho, \varrho+1, 3 \varrho))$ 
\item Reduced characteristic vector 18: $(-1+2 \varrho, (-3 \varrho+2, 1-\varrho, 2-2 \varrho, 1, 2-\varrho, 3-2 \varrho))$ 
\item Reduced characteristic vector 19: $(1-\varrho, (0, 1-\varrho, \varrho, 1, 2 \varrho, \varrho+1))$ 
\item Reduced characteristic vector 20: $(-1+2 \varrho, (0, 1-\varrho, 2-2 \varrho, 1, 2-\varrho, \varrho+1))$ 
\item Reduced characteristic vector 21: $(-3 \varrho+2, (0, -1+2 \varrho, \varrho, 1, 2 \varrho, \varrho+1, 3 \varrho))$ 
\item Reduced characteristic vector 22: $(-1+2 \varrho, (-3 \varrho+2, 1-\varrho, 2-2 \varrho, 3-3 \varrho, 2-\varrho, 3-2 \varrho))$ 
\item Reduced characteristic vector 23: $(-1+2 \varrho, (0, 1-\varrho, 2-2 \varrho, 1, 2-\varrho, 3-2 \varrho))$ 
\item Reduced characteristic vector 24: $(-1+2 \varrho, (0, -3 \varrho+2, 1-\varrho, 2-2 \varrho, 1, 2-\varrho, 3-2 \varrho))$ 
\item Reduced characteristic vector 25: $(1-\varrho, (0, -1+2 \varrho, 1-\varrho, \varrho, 1, 2 \varrho, \varrho+1))$ 
\item Reduced characteristic vector 26: $(-1+2 \varrho, (0, 1-\varrho, \varrho, 2-2 \varrho, 1, 2-\varrho, \varrho+1))$ 
\item Reduced characteristic vector 27: $(-3 \varrho+2, (0, -1+2 \varrho, \varrho, -1+3 \varrho, 1, 2 \varrho, \varrho+1, 3 \varrho))$ 
\item Reduced characteristic vector 28: $(-1+2 \varrho, (-3 \varrho+2, 1-\varrho, 2-2 \varrho, 1, 3-3 \varrho, 2-\varrho, 3-2 \varrho))$ 
\item Reduced characteristic vector 29: $(1-\varrho, (0, 1-\varrho, \varrho, 1, 2 \varrho, 2-\varrho, \varrho+1))$ 
\item Reduced characteristic vector 30: $(-1+2 \varrho, (0, 1-\varrho, 2-2 \varrho, 1, 2-\varrho, \varrho+1, 3-2 \varrho))$ 
\item Reduced characteristic vector 31: $(-1+2 \varrho, (0, 1-\varrho, \varrho, 1, 2 \varrho, 2-\varrho, \varrho+1))$ 
\item Reduced characteristic vector 32: $(-3 \varrho+2, (0, -1+2 \varrho, \varrho, -1+3 \varrho, 2 \varrho, -1+4 \varrho, \varrho+1, 3 \varrho))$ 
\item Reduced characteristic vector 33: $(-1+2 \varrho, (0, -3 \varrho+2, 1-\varrho, 2-2 \varrho, 1, 2-\varrho, \varrho+1, 3-2 \varrho))$ 
\item Reduced characteristic vector 34: $(-3 \varrho+2, (0, -1+2 \varrho, 1-\varrho, \varrho, 1, 2 \varrho, \varrho+1, 3 \varrho))$ 
\item Reduced characteristic vector 35: $(-1+2 \varrho, (-3 \varrho+2, 1-\varrho, 3-4 \varrho, 2-2 \varrho, 3-3 \varrho, 2-\varrho, 3-2 \varrho))$ 
\item Reduced characteristic vector 36: $(-1+2 \varrho, (0, -3 \varrho+2, 1-\varrho, 2-2 \varrho, 1, 3-3 \varrho, 2-\varrho, 3-2 \varrho))$ 
\item Reduced characteristic vector 37: $(1-\varrho, (0, -1+2 \varrho, 1-\varrho, \varrho, 1, 2 \varrho, 2-\varrho, \varrho+1))$ 
\item Reduced characteristic vector 38: $(-1+2 \varrho, (0, 1-\varrho, \varrho, 2-2 \varrho, 1, 2-\varrho, \varrho+1, 3-2 \varrho))$ 
\end{itemize}
The maps are:
\begin{itemize}
\item RCV $1 \to [2, 3, 4, 5, 6]$
\item RCV $2 \to [7, 8]$
\item RCV $3 \to [9]$
\item RCV $4 \to [10, 11, 12]$
\item RCV $5 \to [13]$
\item RCV $6 \to [14, 15]$
\item RCV $7 \to [16, 17, 18]$
\item RCV $8 \to [19]$
\item RCV $9 \to [20, 21, 22]$
\item RCV $10 \to [7]$
\item RCV $11 \to [23]$
\item RCV $12 \to [15]$
\item RCV $13 \to [16, 17, 18]$
\item RCV $14 \to [19]$
\item RCV $15 \to [20, 21, 22]$
\item RCV $16 \to [16, 17]$
\item RCV $17 \to [24]$
\item RCV $18 \to [25]$
\item RCV $19 \to [26, 27, 28]$
\item RCV $20 \to [29]$
\item RCV $21 \to [30]$
\item RCV $22 \to [21, 22]$
\item RCV $23 \to [19]$
\item RCV $24 \to [25]$
\item RCV $25 \to [26, 27, 28]$
\item RCV $26 \to [31, 32]$
\item RCV $27 \to [33]$
\item RCV $28 \to [34, 35]$
\item RCV $29 \to [26, 27, 28]$
\item RCV $30 \to [29]$
\item RCV $31 \to [26, 27]$
\item RCV $32 \to [36]$
\item RCV $33 \to [37]$
\item RCV $34 \to [38]$
\item RCV $35 \to [27, 28]$
\item RCV $36 \to [34, 35]$
\item RCV $37 \to [26, 27, 28]$
\item RCV $38 \to [31, 32]$
\end{itemize}

See Figure \ref{fig:ArXPic3} for the transition diagram.
\begin{figure}[tbp]
\includegraphics[scale=0.4]{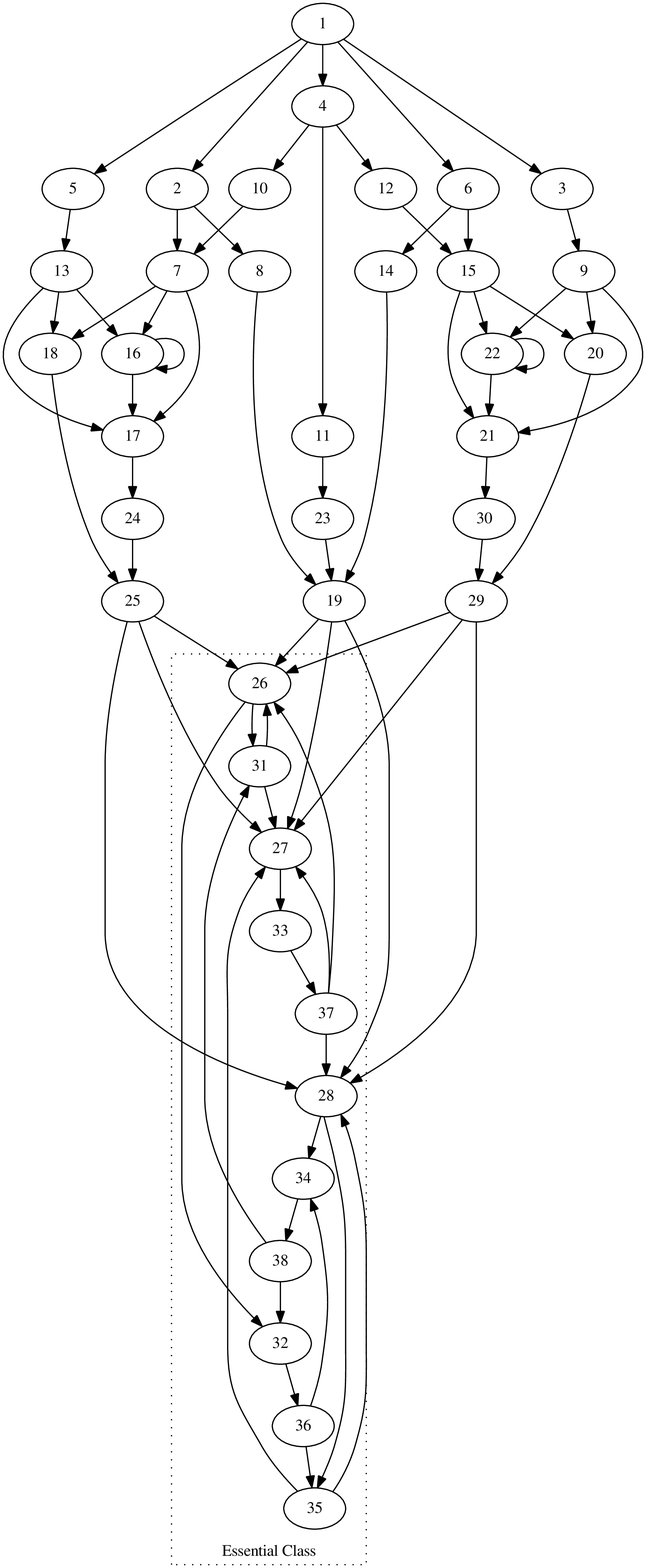}
\caption{$x^2+x-1$ with $d_i \in [0, 1-\varrho, 2-2 \varrho]$, Full set + Essential class}
\label{fig:ArXPic3}
\end{figure}
This has (normalized) transition matrices:
\begin{align*}
T(1,2) & =  \left[ \begin {array}{cccc} 1&0&0&0\\ \noalign{\medskip}0&1&2&1\end {array} \right] & 
 T(1,3) & =  \left[ \begin {array}{ccc} 1&0&0\\ \noalign{\medskip}0&1&2\end {array} \right] \\ 
T(1,4) & =  \left[ \begin {array}{cccc} 2&1&0&0\\ \noalign{\medskip}0&0&1&2\end {array} \right] & 
 T(1,5) & =  \left[ \begin {array}{ccc} 2&1&0\\ \noalign{\medskip}0&0&1\end {array} \right] \\ 
T(1,6) & =  \left[ \begin {array}{cccc} 1&2&1&0\\ \noalign{\medskip}0&0&0&1\end {array} \right] & 
 T(2,7) & =  \left[ \begin {array}{cccccc} 1&0&0&0&0&0\\ \noalign{\medskip}2&0&1&0&0&0\\ \noalign{\medskip}0&1&0&2&0&1\\ \noalign{\medskip}0&0&0&0&1&0\end {array} \right] \\ 
T(2,8) & =  \left[ \begin {array}{ccccc} 1&0&0&0&0\\ \noalign{\medskip}2&0&1&0&0\\ \noalign{\medskip}0&1&0&2&0\\ \noalign{\medskip}0&0&0&0&1\end {array} \right] & 
 T(3,9) & =  \left[ \begin {array}{ccccc} 2&1&0&0&0\\ \noalign{\medskip}1&2&0&1&0\\ \noalign{\medskip}0&0&1&0&2\end {array} \right] \\ 
T(4,10) & =  \left[ \begin {array}{ccccc} 1&0&0&0&0\\ \noalign{\medskip}0&2&1&0&0\\ \noalign{\medskip}0&1&2&0&1\\ \noalign{\medskip}0&0&0&1&0\end {array} \right] & 
 T(4,11) & =  \left[ \begin {array}{cccccc} 0&1&0&0&0&0\\ \noalign{\medskip}1&0&2&1&0&0\\ \noalign{\medskip}0&0&1&2&0&1\\ \noalign{\medskip}0&0&0&0&1&0\end {array} \right] \\ 
T(4,12) & =  \left[ \begin {array}{ccccc} 0&1&0&0&0\\ \noalign{\medskip}1&0&2&1&0\\ \noalign{\medskip}0&0&1&2&0\\ \noalign{\medskip}0&0&0&0&1\end {array} \right] & 
 T(5,13) & =  \left[ \begin {array}{ccccc} 2&0&1&0&0\\ \noalign{\medskip}0&1&0&2&1\\ \noalign{\medskip}0&0&0&1&2\end {array} \right] \\ 
T(6,14) & =  \left[ \begin {array}{ccccc} 1&0&0&0&0\\ \noalign{\medskip}0&2&0&1&0\\ \noalign{\medskip}0&0&1&0&2\\ \noalign{\medskip}0&0&0&0&1\end {array} \right] & 
 T(6,15) & =  \left[ \begin {array}{cccccc} 0&1&0&0&0&0\\ \noalign{\medskip}1&0&2&0&1&0\\ \noalign{\medskip}0&0&0&1&0&2\\ \noalign{\medskip}0&0&0&0&0&1\end {array} \right] \\ 
T(7,16) & =  \left[ \begin {array}{cccccc} 1&0&0&0&0&0\\ \noalign{\medskip}2&0&1&0&0&0\\ \noalign{\medskip}0&2&0&1&0&0\\ \noalign{\medskip}0&1&0&2&0&1\\ \noalign{\medskip}0&0&0&1&0&2\\ \noalign{\medskip}0&0&0&0&1&0\end {array} \right] \\ 
T(7,17) & =  \left[ \begin {array}{ccccccc} 0&1&0&0&0&0&0\\ \noalign{\medskip}0&2&0&1&0&0&0\\ \noalign{\medskip}1&0&2&0&1&0&0\\ \noalign{\medskip}0&0&1&0&2&0&1\\ \noalign{\medskip}0&0&0&0&1&0&2\\ \noalign{\medskip}0&0&0&0&0&1&0\end {array} \right] \\ 
T(7,18) & =  \left[ \begin {array}{cccccc} 0&1&0&0&0&0\\ \noalign{\medskip}0&2&0&1&0&0\\ \noalign{\medskip}1&0&2&0&1&0\\ \noalign{\medskip}0&0&1&0&2&0\\ \noalign{\medskip}0&0&0&0&1&0\\ \noalign{\medskip}0&0&0&0&0&1\end {array} \right] \\ 
T(8,19) & =  \left[ \begin {array}{cccccc} 2&0&1&0&0&0\\ \noalign{\medskip}1&0&2&0&1&0\\ \noalign{\medskip}0&1&0&2&0&1\\ \noalign{\medskip}0&0&0&1&0&2\\ \noalign{\medskip}0&0&0&0&0&1\end {array} \right] \\ 
T(9,20) & =  \left[ \begin {array}{cccccc} 1&0&0&0&0&0\\ \noalign{\medskip}0&2&0&1&0&0\\ \noalign{\medskip}0&1&0&2&0&1\\ \noalign{\medskip}0&0&1&0&2&0\\ \noalign{\medskip}0&0&0&0&1&0\end {array} \right] & 
 T(9,21) & =  \left[ \begin {array}{ccccccc} 0&1&0&0&0&0&0\\ \noalign{\medskip}1&0&2&0&1&0&0\\ \noalign{\medskip}0&0&1&0&2&0&1\\ \noalign{\medskip}0&0&0&1&0&2&0\\ \noalign{\medskip}0&0&0&0&0&1&0\end {array} \right] \\ 
T(9,22) & =  \left[ \begin {array}{cccccc} 0&1&0&0&0&0\\ \noalign{\medskip}1&0&2&0&1&0\\ \noalign{\medskip}0&0&1&0&2&0\\ \noalign{\medskip}0&0&0&1&0&2\\ \noalign{\medskip}0&0&0&0&0&1\end {array} \right] & 
 T(10,7) & =  \left[ \begin {array}{cccccc} 1&0&0&0&0&0\\ \noalign{\medskip}2&0&1&0&0&0\\ \noalign{\medskip}0&1&0&2&0&1\\ \noalign{\medskip}0&0&0&1&0&2\\ \noalign{\medskip}0&0&0&0&1&0\end {array} \right] \\ 
T(11,23) & =  \left[ \begin {array}{cccccc} 1&0&0&0&0&0\\ \noalign{\medskip}0&1&0&0&0&0\\ \noalign{\medskip}0&2&0&1&0&0\\ \noalign{\medskip}0&0&1&0&2&0\\ \noalign{\medskip}0&0&0&0&1&0\\ \noalign{\medskip}0&0&0&0&0&1\end {array} \right] \\ 
T(12,15) & =  \left[ \begin {array}{cccccc} 0&1&0&0&0&0\\ \noalign{\medskip}2&0&1&0&0&0\\ \noalign{\medskip}1&0&2&0&1&0\\ \noalign{\medskip}0&0&0&1&0&2\\ \noalign{\medskip}0&0&0&0&0&1\end {array} \right] \\ 
T(13,16) & =  \left[ \begin {array}{cccccc} 1&0&0&0&0&0\\ \noalign{\medskip}2&0&1&0&0&0\\ \noalign{\medskip}0&2&0&1&0&0\\ \noalign{\medskip}0&1&0&2&0&1\\ \noalign{\medskip}0&0&0&0&1&0\end {array} \right] & 
 T(13,17) & =  \left[ \begin {array}{ccccccc} 0&1&0&0&0&0&0\\ \noalign{\medskip}0&2&0&1&0&0&0\\ \noalign{\medskip}1&0&2&0&1&0&0\\ \noalign{\medskip}0&0&1&0&2&0&1\\ \noalign{\medskip}0&0&0&0&0&1&0\end {array} \right] \\ 
T(13,18) & =  \left[ \begin {array}{cccccc} 0&1&0&0&0&0\\ \noalign{\medskip}0&2&0&1&0&0\\ \noalign{\medskip}1&0&2&0&1&0\\ \noalign{\medskip}0&0&1&0&2&0\\ \noalign{\medskip}0&0&0&0&0&1\end {array} \right] & 
 T(14,19) & =  \left[ \begin {array}{cccccc} 1&0&0&0&0&0\\ \noalign{\medskip}2&0&1&0&0&0\\ \noalign{\medskip}1&0&2&0&1&0\\ \noalign{\medskip}0&1&0&2&0&1\\ \noalign{\medskip}0&0&0&1&0&2\end {array} \right] \\ 
T(15,20) & =  \left[ \begin {array}{cccccc} 1&0&0&0&0&0\\ \noalign{\medskip}0&1&0&0&0&0\\ \noalign{\medskip}0&2&0&1&0&0\\ \noalign{\medskip}0&1&0&2&0&1\\ \noalign{\medskip}0&0&1&0&2&0\\ \noalign{\medskip}0&0&0&0&1&0\end {array} \right] \\ 
T(15,21) & =  \left[ \begin {array}{ccccccc} 0&1&0&0&0&0&0\\ \noalign{\medskip}2&0&1&0&0&0&0\\ \noalign{\medskip}1&0&2&0&1&0&0\\ \noalign{\medskip}0&0&1&0&2&0&1\\ \noalign{\medskip}0&0&0&1&0&2&0\\ \noalign{\medskip}0&0&0&0&0&1&0\end {array} \right] \\ 
T(15,22) & =  \left[ \begin {array}{cccccc} 0&1&0&0&0&0\\ \noalign{\medskip}2&0&1&0&0&0\\ \noalign{\medskip}1&0&2&0&1&0\\ \noalign{\medskip}0&0&1&0&2&0\\ \noalign{\medskip}0&0&0&1&0&2\\ \noalign{\medskip}0&0&0&0&0&1\end {array} \right] \\ 
T(16,16) & =  \left[ \begin {array}{cccccc} 1&0&0&0&0&0\\ \noalign{\medskip}2&0&1&0&0&0\\ \noalign{\medskip}0&2&0&1&0&0\\ \noalign{\medskip}0&1&0&2&0&1\\ \noalign{\medskip}0&0&0&1&0&2\\ \noalign{\medskip}0&0&0&0&1&0\end {array} \right] \\ 
T(16,17) & =  \left[ \begin {array}{ccccccc} 0&1&0&0&0&0&0\\ \noalign{\medskip}0&2&0&1&0&0&0\\ \noalign{\medskip}1&0&2&0&1&0&0\\ \noalign{\medskip}0&0&1&0&2&0&1\\ \noalign{\medskip}0&0&0&0&1&0&2\\ \noalign{\medskip}0&0&0&0&0&1&0\end {array} \right] \\ 
T(17,24) & =  \left[ \begin {array}{ccccccc} 1&0&0&0&0&0&0\\ \noalign{\medskip}0&0&1&0&0&0&0\\ \noalign{\medskip}0&0&2&0&1&0&0\\ \noalign{\medskip}0&1&0&2&0&1&0\\ \noalign{\medskip}0&0&0&1&0&2&0\\ \noalign{\medskip}0&0&0&0&0&1&0\\ \noalign{\medskip}0&0&0&0&0&0&1\end {array} \right] \\ 
T(18,25) & =  \left[ \begin {array}{ccccccc} 0&1&0&0&0&0&0\\ \noalign{\medskip}2&0&0&1&0&0&0\\ \noalign{\medskip}1&0&0&2&0&1&0\\ \noalign{\medskip}0&0&1&0&2&0&1\\ \noalign{\medskip}0&0&0&0&1&0&2\\ \noalign{\medskip}0&0&0&0&0&0&1\end {array} \right] \\ 
T(19,26) & =  \left[ \begin {array}{ccccccc} 1&0&0&0&0&0&0\\ \noalign{\medskip}2&0&1&0&0&0&0\\ \noalign{\medskip}0&2&0&0&1&0&0\\ \noalign{\medskip}0&1&0&0&2&0&1\\ \noalign{\medskip}0&0&0&1&0&2&0\\ \noalign{\medskip}0&0&0&0&0&1&0\end {array} \right] \\ 
T(19,27) & =  \left[ \begin {array}{cccccccc} 0&1&0&0&0&0&0&0\\ \noalign{\medskip}0&2&0&1&0&0&0&0\\ \noalign{\medskip}1&0&2&0&0&1&0&0\\ \noalign{\medskip}0&0&1&0&0&2&0&1\\ \noalign{\medskip}0&0&0&0&1&0&2&0\\ \noalign{\medskip}0&0&0&0&0&0&1&0\end {array} \right] \\ 
T(19,28) & =  \left[ \begin {array}{ccccccc} 0&1&0&0&0&0&0\\ \noalign{\medskip}0&2&0&1&0&0&0\\ \noalign{\medskip}1&0&2&0&0&1&0\\ \noalign{\medskip}0&0&1&0&0&2&0\\ \noalign{\medskip}0&0&0&0&1&0&2\\ \noalign{\medskip}0&0&0&0&0&0&1\end {array} \right] \\ 
T(20,29) & =  \left[ \begin {array}{ccccccc} 1&0&0&0&0&0&0\\ \noalign{\medskip}2&0&1&0&0&0&0\\ \noalign{\medskip}1&0&2&0&1&0&0\\ \noalign{\medskip}0&1&0&2&0&0&1\\ \noalign{\medskip}0&0&0&1&0&0&2\\ \noalign{\medskip}0&0&0&0&0&1&0\end {array} \right] \\ 
T(21,30) & =  \left[ \begin {array}{ccccccc} 1&0&0&0&0&0&0\\ \noalign{\medskip}0&1&0&0&0&0&0\\ \noalign{\medskip}0&2&0&1&0&0&0\\ \noalign{\medskip}0&1&0&2&0&1&0\\ \noalign{\medskip}0&0&1&0&2&0&0\\ \noalign{\medskip}0&0&0&0&1&0&0\\ \noalign{\medskip}0&0&0&0&0&0&1\end {array} \right] \\ 
T(22,21) & =  \left[ \begin {array}{ccccccc} 0&1&0&0&0&0&0\\ \noalign{\medskip}2&0&1&0&0&0&0\\ \noalign{\medskip}1&0&2&0&1&0&0\\ \noalign{\medskip}0&0&1&0&2&0&1\\ \noalign{\medskip}0&0&0&1&0&2&0\\ \noalign{\medskip}0&0&0&0&0&1&0\end {array} \right] \\ 
T(22,22) & =  \left[ \begin {array}{cccccc} 0&1&0&0&0&0\\ \noalign{\medskip}2&0&1&0&0&0\\ \noalign{\medskip}1&0&2&0&1&0\\ \noalign{\medskip}0&0&1&0&2&0\\ \noalign{\medskip}0&0&0&1&0&2\\ \noalign{\medskip}0&0&0&0&0&1\end {array} \right] \\ 
T(23,19) & =  \left[ \begin {array}{cccccc} 1&0&0&0&0&0\\ \noalign{\medskip}2&0&1&0&0&0\\ \noalign{\medskip}1&0&2&0&1&0\\ \noalign{\medskip}0&1&0&2&0&1\\ \noalign{\medskip}0&0&0&1&0&2\\ \noalign{\medskip}0&0&0&0&0&1\end {array} \right] \\ 
T(24,25) & =  \left[ \begin {array}{ccccccc} 1&0&0&0&0&0&0\\ \noalign{\medskip}0&1&0&0&0&0&0\\ \noalign{\medskip}2&0&0&1&0&0&0\\ \noalign{\medskip}1&0&0&2&0&1&0\\ \noalign{\medskip}0&0&1&0&2&0&1\\ \noalign{\medskip}0&0&0&0&1&0&2\\ \noalign{\medskip}0&0&0&0&0&0&1\end {array} \right] \\ 
T(25,26) & =  \left[ \begin {array}{ccccccc} 1&0&0&0&0&0&0\\ \noalign{\medskip}0&1&0&0&0&0&0\\ \noalign{\medskip}2&0&1&0&0&0&0\\ \noalign{\medskip}0&2&0&0&1&0&0\\ \noalign{\medskip}0&1&0&0&2&0&1\\ \noalign{\medskip}0&0&0&1&0&2&0\\ \noalign{\medskip}0&0&0&0&0&1&0\end {array} \right] \\ 
T(25,27) & =  \left[ \begin {array}{cccccccc} 0&1&0&0&0&0&0&0\\ \noalign{\medskip}2&0&1&0&0&0&0&0\\ \noalign{\medskip}0&2&0&1&0&0&0&0\\ \noalign{\medskip}1&0&2&0&0&1&0&0\\ \noalign{\medskip}0&0&1&0&0&2&0&1\\ \noalign{\medskip}0&0&0&0&1&0&2&0\\ \noalign{\medskip}0&0&0&0&0&0&1&0\end {array} \right] \\ 
T(25,28) & =  \left[ \begin {array}{ccccccc} 0&1&0&0&0&0&0\\ \noalign{\medskip}2&0&1&0&0&0&0\\ \noalign{\medskip}0&2&0&1&0&0&0\\ \noalign{\medskip}1&0&2&0&0&1&0\\ \noalign{\medskip}0&0&1&0&0&2&0\\ \noalign{\medskip}0&0&0&0&1&0&2\\ \noalign{\medskip}0&0&0&0&0&0&1\end {array} \right] \\ 
T(26,31) & =  \left[ \begin {array}{ccccccc} 1&0&0&0&0&0&0\\ \noalign{\medskip}2&0&1&0&0&0&0\\ \noalign{\medskip}0&2&0&1&0&0&0\\ \noalign{\medskip}1&0&2&0&1&0&0\\ \noalign{\medskip}0&1&0&2&0&0&1\\ \noalign{\medskip}0&0&0&1&0&0&2\\ \noalign{\medskip}0&0&0&0&0&1&0\end {array} \right] \\ 
T(26,32) & =  \left[ \begin {array}{cccccccc} 0&1&0&0&0&0&0&0\\ \noalign{\medskip}0&2&0&1&0&0&0&0\\ \noalign{\medskip}1&0&2&0&1&0&0&0\\ \noalign{\medskip}0&1&0&2&0&1&0&0\\ \noalign{\medskip}0&0&1&0&2&0&0&1\\ \noalign{\medskip}0&0&0&0&1&0&0&2\\ \noalign{\medskip}0&0&0&0&0&0&1&0\end {array} \right] \\ 
T(27,33) & =  \left[ \begin {array}{cccccccc} 1&0&0&0&0&0&0&0\\ \noalign{\medskip}0&0&1&0&0&0&0&0\\ \noalign{\medskip}0&0&2&0&1&0&0&0\\ \noalign{\medskip}0&1&0&2&0&1&0&0\\ \noalign{\medskip}0&0&1&0&2&0&1&0\\ \noalign{\medskip}0&0&0&1&0&2&0&0\\ \noalign{\medskip}0&0&0&0&0&1&0&0\\ \noalign{\medskip}0&0&0&0&0&0&0&1\end {array} \right] \\ 
T(28,34) & =  \left[ \begin {array}{cccccccc} 0&1&0&0&0&0&0&0\\ \noalign{\medskip}2&0&0&1&0&0&0&0\\ \noalign{\medskip}1&0&0&2&0&1&0&0\\ \noalign{\medskip}0&0&1&0&2&0&1&0\\ \noalign{\medskip}0&0&0&1&0&2&0&1\\ \noalign{\medskip}0&0&0&0&1&0&2&0\\ \noalign{\medskip}0&0&0&0&0&0&1&0\end {array} \right] \\ 
T(28,35) & =  \left[ \begin {array}{ccccccc} 0&1&0&0&0&0&0\\ \noalign{\medskip}2&0&0&1&0&0&0\\ \noalign{\medskip}1&0&0&2&0&1&0\\ \noalign{\medskip}0&0&1&0&2&0&1\\ \noalign{\medskip}0&0&0&1&0&2&0\\ \noalign{\medskip}0&0&0&0&1&0&2\\ \noalign{\medskip}0&0&0&0&0&0&1\end {array} \right] \\ 
T(29,26) & =  \left[ \begin {array}{ccccccc} 1&0&0&0&0&0&0\\ \noalign{\medskip}2&0&1&0&0&0&0\\ \noalign{\medskip}0&2&0&0&1&0&0\\ \noalign{\medskip}0&1&0&0&2&0&1\\ \noalign{\medskip}0&0&0&1&0&2&0\\ \noalign{\medskip}0&0&0&0&1&0&2\\ \noalign{\medskip}0&0&0&0&0&1&0\end {array} \right] \\ 
T(29,27) & =  \left[ \begin {array}{cccccccc} 0&1&0&0&0&0&0&0\\ \noalign{\medskip}0&2&0&1&0&0&0&0\\ \noalign{\medskip}1&0&2&0&0&1&0&0\\ \noalign{\medskip}0&0&1&0&0&2&0&1\\ \noalign{\medskip}0&0&0&0&1&0&2&0\\ \noalign{\medskip}0&0&0&0&0&1&0&2\\ \noalign{\medskip}0&0&0&0&0&0&1&0\end {array} \right] \\ 
T(29,28) & =  \left[ \begin {array}{ccccccc} 0&1&0&0&0&0&0\\ \noalign{\medskip}0&2&0&1&0&0&0\\ \noalign{\medskip}1&0&2&0&0&1&0\\ \noalign{\medskip}0&0&1&0&0&2&0\\ \noalign{\medskip}0&0&0&0&1&0&2\\ \noalign{\medskip}0&0&0&0&0&1&0\\ \noalign{\medskip}0&0&0&0&0&0&1\end {array} \right] \\ 
T(30,29) & =  \left[ \begin {array}{ccccccc} 1&0&0&0&0&0&0\\ \noalign{\medskip}2&0&1&0&0&0&0\\ \noalign{\medskip}1&0&2&0&1&0&0\\ \noalign{\medskip}0&1&0&2&0&0&1\\ \noalign{\medskip}0&0&0&1&0&0&2\\ \noalign{\medskip}0&0&0&0&0&1&0\\ \noalign{\medskip}0&0&0&0&0&0&1\end {array} \right] \\ 
T(31,26) & =  \left[ \begin {array}{ccccccc} 1&0&0&0&0&0&0\\ \noalign{\medskip}2&0&1&0&0&0&0\\ \noalign{\medskip}0&2&0&0&1&0&0\\ \noalign{\medskip}0&1&0&0&2&0&1\\ \noalign{\medskip}0&0&0&1&0&2&0\\ \noalign{\medskip}0&0&0&0&1&0&2\\ \noalign{\medskip}0&0&0&0&0&1&0\end {array} \right] \\ 
T(31,27) & =  \left[ \begin {array}{cccccccc} 0&1&0&0&0&0&0&0\\ \noalign{\medskip}0&2&0&1&0&0&0&0\\ \noalign{\medskip}1&0&2&0&0&1&0&0\\ \noalign{\medskip}0&0&1&0&0&2&0&1\\ \noalign{\medskip}0&0&0&0&1&0&2&0\\ \noalign{\medskip}0&0&0&0&0&1&0&2\\ \noalign{\medskip}0&0&0&0&0&0&1&0\end {array} \right] \\ 
T(32,36) & =  \left[ \begin {array}{cccccccc} 1&0&0&0&0&0&0&0\\ \noalign{\medskip}0&0&1&0&0&0&0&0\\ \noalign{\medskip}0&0&2&0&1&0&0&0\\ \noalign{\medskip}0&1&0&2&0&0&1&0\\ \noalign{\medskip}0&0&0&1&0&0&2&0\\ \noalign{\medskip}0&0&0&0&0&1&0&2\\ \noalign{\medskip}0&0&0&0&0&0&1&0\\ \noalign{\medskip}0&0&0&0&0&0&0&1\end {array} \right] \\ 
T(33,37) & =  \left[ \begin {array}{cccccccc} 1&0&0&0&0&0&0&0\\ \noalign{\medskip}0&1&0&0&0&0&0&0\\ \noalign{\medskip}2&0&0&1&0&0&0&0\\ \noalign{\medskip}1&0&0&2&0&1&0&0\\ \noalign{\medskip}0&0&1&0&2&0&0&1\\ \noalign{\medskip}0&0&0&0&1&0&0&2\\ \noalign{\medskip}0&0&0&0&0&0&1&0\\ \noalign{\medskip}0&0&0&0&0&0&0&1\end {array} \right] \\ 
T(34,38) & =  \left[ \begin {array}{cccccccc} 1&0&0&0&0&0&0&0\\ \noalign{\medskip}0&1&0&0&0&0&0&0\\ \noalign{\medskip}2&0&1&0&0&0&0&0\\ \noalign{\medskip}0&2&0&0&1&0&0&0\\ \noalign{\medskip}0&1&0&0&2&0&1&0\\ \noalign{\medskip}0&0&0&1&0&2&0&0\\ \noalign{\medskip}0&0&0&0&0&1&0&0\\ \noalign{\medskip}0&0&0&0&0&0&0&1\end {array} \right] \\ 
T(35,27) & =  \left[ \begin {array}{cccccccc} 0&1&0&0&0&0&0&0\\ \noalign{\medskip}2&0&1&0&0&0&0&0\\ \noalign{\medskip}0&2&0&1&0&0&0&0\\ \noalign{\medskip}1&0&2&0&0&1&0&0\\ \noalign{\medskip}0&0&1&0&0&2&0&1\\ \noalign{\medskip}0&0&0&0&1&0&2&0\\ \noalign{\medskip}0&0&0&0&0&0&1&0\end {array} \right] \\ 
T(35,28) & =  \left[ \begin {array}{ccccccc} 0&1&0&0&0&0&0\\ \noalign{\medskip}2&0&1&0&0&0&0\\ \noalign{\medskip}0&2&0&1&0&0&0\\ \noalign{\medskip}1&0&2&0&0&1&0\\ \noalign{\medskip}0&0&1&0&0&2&0\\ \noalign{\medskip}0&0&0&0&1&0&2\\ \noalign{\medskip}0&0&0&0&0&0&1\end {array} \right] \\ 
T(36,34) & =  \left[ \begin {array}{cccccccc} 1&0&0&0&0&0&0&0\\ \noalign{\medskip}0&1&0&0&0&0&0&0\\ \noalign{\medskip}2&0&0&1&0&0&0&0\\ \noalign{\medskip}1&0&0&2&0&1&0&0\\ \noalign{\medskip}0&0&1&0&2&0&1&0\\ \noalign{\medskip}0&0&0&1&0&2&0&1\\ \noalign{\medskip}0&0&0&0&1&0&2&0\\ \noalign{\medskip}0&0&0&0&0&0&1&0\end {array} \right] \\ 
T(36,35) & =  \left[ \begin {array}{ccccccc} 1&0&0&0&0&0&0\\ \noalign{\medskip}0&1&0&0&0&0&0\\ \noalign{\medskip}2&0&0&1&0&0&0\\ \noalign{\medskip}1&0&0&2&0&1&0\\ \noalign{\medskip}0&0&1&0&2&0&1\\ \noalign{\medskip}0&0&0&1&0&2&0\\ \noalign{\medskip}0&0&0&0&1&0&2\\ \noalign{\medskip}0&0&0&0&0&0&1\end {array} \right] \\ 
T(37,26) & =  \left[ \begin {array}{ccccccc} 1&0&0&0&0&0&0\\ \noalign{\medskip}0&1&0&0&0&0&0\\ \noalign{\medskip}2&0&1&0&0&0&0\\ \noalign{\medskip}0&2&0&0&1&0&0\\ \noalign{\medskip}0&1&0&0&2&0&1\\ \noalign{\medskip}0&0&0&1&0&2&0\\ \noalign{\medskip}0&0&0&0&1&0&2\\ \noalign{\medskip}0&0&0&0&0&1&0\end {array} \right] \\ 
T(37,27) & =  \left[ \begin {array}{cccccccc} 0&1&0&0&0&0&0&0\\ \noalign{\medskip}2&0&1&0&0&0&0&0\\ \noalign{\medskip}0&2&0&1&0&0&0&0\\ \noalign{\medskip}1&0&2&0&0&1&0&0\\ \noalign{\medskip}0&0&1&0&0&2&0&1\\ \noalign{\medskip}0&0&0&0&1&0&2&0\\ \noalign{\medskip}0&0&0&0&0&1&0&2\\ \noalign{\medskip}0&0&0&0&0&0&1&0\end {array} \right] \\ 
T(37,28) & =  \left[ \begin {array}{ccccccc} 0&1&0&0&0&0&0\\ \noalign{\medskip}2&0&1&0&0&0&0\\ \noalign{\medskip}0&2&0&1&0&0&0\\ \noalign{\medskip}1&0&2&0&0&1&0\\ \noalign{\medskip}0&0&1&0&0&2&0\\ \noalign{\medskip}0&0&0&0&1&0&2\\ \noalign{\medskip}0&0&0&0&0&1&0\\ \noalign{\medskip}0&0&0&0&0&0&1\end {array} \right] \\ 
T(38,31) & =  \left[ \begin {array}{ccccccc} 1&0&0&0&0&0&0\\ \noalign{\medskip}2&0&1&0&0&0&0\\ \noalign{\medskip}0&2&0&1&0&0&0\\ \noalign{\medskip}1&0&2&0&1&0&0\\ \noalign{\medskip}0&1&0&2&0&0&1\\ \noalign{\medskip}0&0&0&1&0&0&2\\ \noalign{\medskip}0&0&0&0&0&1&0\\ \noalign{\medskip}0&0&0&0&0&0&1\end {array} \right] \\ 
T(38,32) & =  \left[ \begin {array}{cccccccc} 0&1&0&0&0&0&0&0\\ \noalign{\medskip}0&2&0&1&0&0&0&0\\ \noalign{\medskip}1&0&2&0&1&0&0&0\\ \noalign{\medskip}0&1&0&2&0&1&0&0\\ \noalign{\medskip}0&0&1&0&2&0&0&1\\ \noalign{\medskip}0&0&0&0&1&0&0&2\\ \noalign{\medskip}0&0&0&0&0&0&1&0\\ \noalign{\medskip}0&0&0&0&0&0&0&1\end {array} \right] \\ 
\end{align*}

The essential class is: [26, 27, 28, 31, 32, 33, 34, 35, 36, 37, 38].
The essential class is of positive type.
An example is the path [26, 32, 36, 34, 38, 32, 36, 35].
The essential class is not a simple loop.
This spectral range will include the interval $[2.469158042, 2.481194304]$.
The minimum comes from the loop $[27, 33, 37, 27]$.
The maximim comes from the loop $[26, 31, 26]$.
These points will include points of local dimension [.992399434, 1.002504754].
The Spectral Range is contained in the range $[2.038390910, 2.701372314]$.
The minimum comes from the column sub-norm on the subset ${{3, 4}}$ of length 20. 
The maximum comes from the total column sup-norm of length 10. 
These points will have local dimension contained in [.815720713, 1.400908289].

There are 2 additional maximal loops.

Maximal Loop Class: [22].
The maximal loop class is a simple loop.
It's spectral radius is an isolated points of 2.481194304.
These points have local dimension .992399434.

Maximal Loop Class: [16].
The maximal loop class is a simple loop.
It's spectral radius is an isolated points of 2.481194304.
These points have local dimension .992399434.

The set of local dimensions will contain
\[[.9924,1.003]\]This has 1 components. 

The set of local dimensions is contained in
 \[[.8157,1.401]\]This has 1 components.

\section{Details for Example \ref{Ex:isolatedPt}}
 
Consider the measure given by the maps $S_i(x) = x/4  + d_i$ with $d_{0} = 0$, $d_{1} = 1/8$, $d_{2} = 1/4$, $d_{3} = 3/8$, $d_{4} = 1/2$, $d_{5} = 7/8$, $d_{6} = 9/8$, and $d_{7} = 3/2$.
The probabilities are given by $p_{0} = 1/2402$, $p_{1} = 500/1201$, $p_{2} = 500/1201$, $p_{3} = 50/1201$, $p_{4} = 50/1201$, $p_{5} = 50/1201$, $p_{6} = 50/1201$, and $p_{7} = 1/2402$.
We consider the quotient measure.
The reduced transition diagram has 10 reduced characteristic vectors.
The reduced characteristic vectors are:
\begin{itemize}
\item Reduced characteristic vector 1: $(2, (0, 2))$ 
\item Reduced characteristic vector 2: $(1, (0, 1))$ 
\item Reduced characteristic vector 3: $(1, (0, 1, 2))$ 
\item Reduced characteristic vector 4: $(1, (0, 1, 2, 3))$ 
\item Reduced characteristic vector 5: $(1, (1, 2, 3))$ 
\item Reduced characteristic vector 6: $(1, (2, 3))$ 
\item Reduced characteristic vector 7: $(1, (0, 3))$ 
\item Reduced characteristic vector 8: $(1, (0, 2, 3))$ 
\item Reduced characteristic vector 9: $(1, (0, 1, 3))$ 
\item Reduced characteristic vector 10: $(2, (0, 1, 2))$ 
\end{itemize}
The maps are:
\begin{itemize}
\item RCV $1 \to [2, 3, 4, 4, 4, 5, 6, 7]$
\item RCV $2 \to [4, 4, 4, 4]$
\item RCV $3 \to [4, 4, 4, 4]$
\item RCV $4 \to [4, 4, 4, 4]$
\item RCV $5 \to [4, 4, 5, 8]$
\item RCV $6 \to [9, 10, 6]$
\item RCV $7 \to [7, 2, 3, 4]$
\item RCV $8 \to [9, 3, 4, 4]$
\item RCV $9 \to [4, 4, 4, 4]$
\item RCV $10 \to [4, 4, 4, 4, 4, 4, 5, 8]$
\end{itemize}

See Figure \ref{fig:ArXPic4} for the transition diagram.
\begin{figure}[tbp]
\includegraphics[scale=0.5]{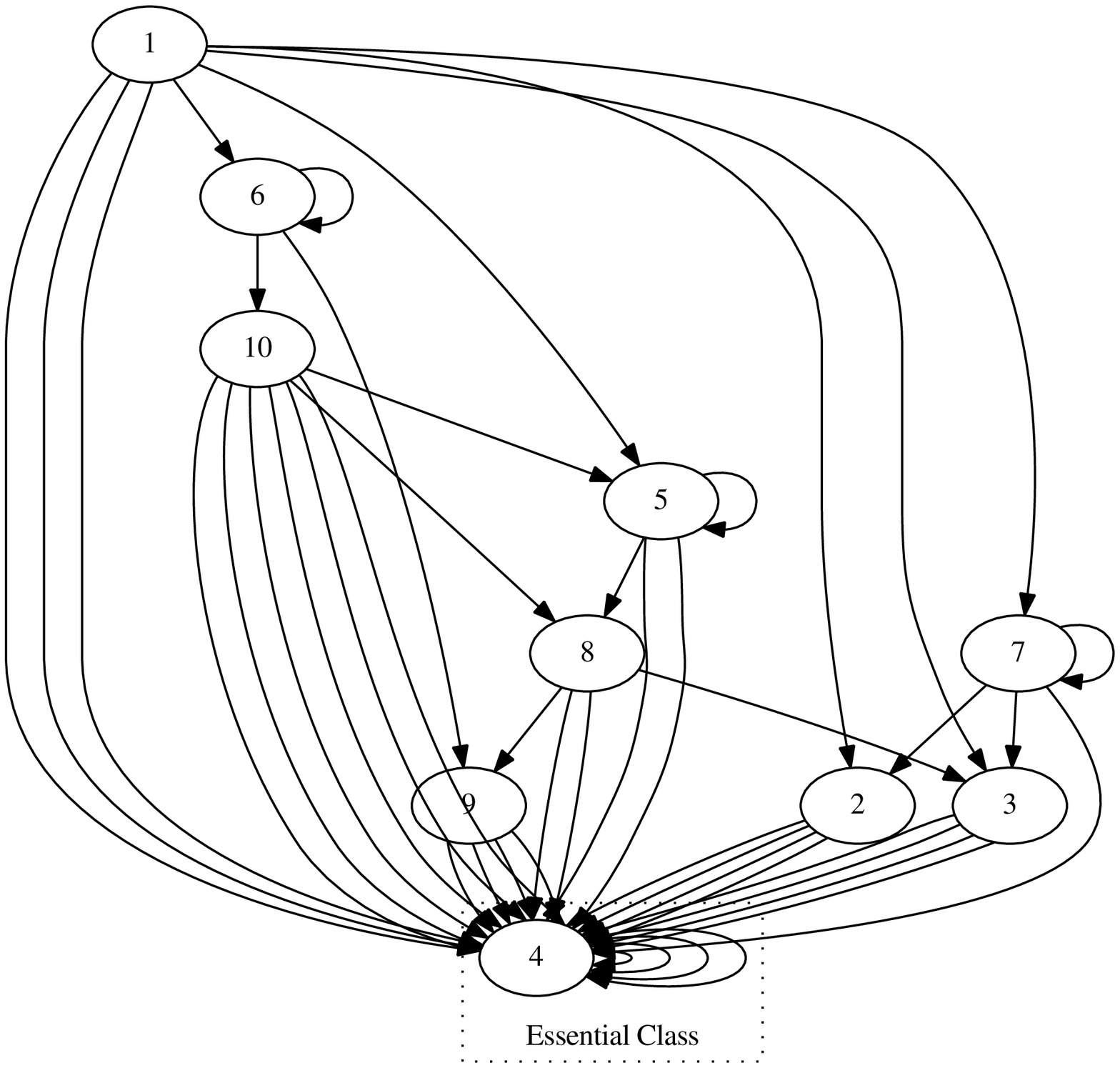}
\caption{$4 x-1$ with $d_i \in [0, 1/8, 1/4, 3/8, 1/2, 7/8, 9/8, 3/2]$, Full set + Essential class}
\label{fig:ArXPic4}
\end{figure}
This has (normalized) transition matrices:
\begin{align*}
T(1,2) & =  \left[ \begin {array}{cc} 1&0\\ \noalign{\medskip}0&100\end {array} \right] & 
 T(1,3) & =  \left[ \begin {array}{ccc} 1000&1&0\\ \noalign{\medskip}100&0&100\end {array} \right] \\ 
T(1,4) & =  \left[ \begin {array}{cccc} 1000&1000&1&0\\ \noalign{\medskip}0&100&0&100\end {array} \right] & 
 T(1,4) & =  \left[ \begin {array}{cccc} 100&1000&1000&1\\ \noalign{\medskip}0&0&100&0\end {array} \right] \\ 
T(1,4) & =  \left[ \begin {array}{cccc} 100&100&1000&1000\\ \noalign{\medskip}1&0&0&100\end {array} \right] & 
 T(1,5) & =  \left[ \begin {array}{ccc} 100&100&1000\\ \noalign{\medskip}1&0&0\end {array} \right] \\ 
T(1,6) & =  \left[ \begin {array}{cc} 100&100\\ \noalign{\medskip}1&0\end {array} \right] & 
 T(1,7) & =  \left[ \begin {array}{cc} 100&100\\ \noalign{\medskip}0&1\end {array} \right] \\ 
T(2,4) & =  \left[ \begin {array}{cccc} 1&0&0&0\\ \noalign{\medskip}100&100&1000&1000\end {array} \right] & 
 T(2,4) & =  \left[ \begin {array}{cccc} 1000&1&0&0\\ \noalign{\medskip}0&100&100&1000\end {array} \right] \\ 
T(2,4) & =  \left[ \begin {array}{cccc} 1000&1000&1&0\\ \noalign{\medskip}0&0&100&100\end {array} \right] & 
 T(2,4) & =  \left[ \begin {array}{cccc} 100&1000&1000&1\\ \noalign{\medskip}100&0&0&100\end {array} \right] \\ 
T(3,4) & =  \left[ \begin {array}{cccc} 1&0&0&0\\ \noalign{\medskip}100&100&1000&1000\\ \noalign{\medskip}0&100&0&0\end {array} \right] & 
 T(3,4) & =  \left[ \begin {array}{cccc} 1000&1&0&0\\ \noalign{\medskip}0&100&100&1000\\ \noalign{\medskip}100&0&100&0\end {array} \right] \\ 
T(3,4) & =  \left[ \begin {array}{cccc} 1000&1000&1&0\\ \noalign{\medskip}0&0&100&100\\ \noalign{\medskip}0&100&0&100\end {array} \right] & 
 T(3,4) & =  \left[ \begin {array}{cccc} 100&1000&1000&1\\ \noalign{\medskip}100&0&0&100\\ \noalign{\medskip}0&0&100&0\end {array} \right] \\ 
T(4,4) & =  \left[ \begin {array}{cccc} 1&0&0&0\\ \noalign{\medskip}100&100&1000&1000\\ \noalign{\medskip}0&100&0&0\\ \noalign{\medskip}1&0&0&100\end {array} \right] & 
 T(4,4) & =  \left[ \begin {array}{cccc} 1000&1&0&0\\ \noalign{\medskip}0&100&100&1000\\ \noalign{\medskip}100&0&100&0\\ \noalign{\medskip}0&1&0&0\end {array} \right] \\ 
T(4,4) & =  \left[ \begin {array}{cccc} 1000&1000&1&0\\ \noalign{\medskip}0&0&100&100\\ \noalign{\medskip}0&100&0&100\\ \noalign{\medskip}0&0&1&0\end {array} \right] & 
 T(4,4) & =  \left[ \begin {array}{cccc} 100&1000&1000&1\\ \noalign{\medskip}100&0&0&100\\ \noalign{\medskip}0&0&100&0\\ \noalign{\medskip}0&0&0&1\end {array} \right] \\ 
T(5,4) & =  \left[ \begin {array}{cccc} 100&100&1000&1000\\ \noalign{\medskip}0&100&0&0\\ \noalign{\medskip}1&0&0&100\end {array} \right] & 
 T(5,4) & =  \left[ \begin {array}{cccc} 0&100&100&1000\\ \noalign{\medskip}100&0&100&0\\ \noalign{\medskip}0&1&0&0\end {array} \right] \\ 
T(5,5) & =  \left[ \begin {array}{ccc} 0&100&100\\ \noalign{\medskip}100&0&100\\ \noalign{\medskip}0&1&0\end {array} \right] & 
 T(5,8) & =  \left[ \begin {array}{ccc} 100&0&100\\ \noalign{\medskip}0&100&0\\ \noalign{\medskip}0&0&1\end {array} \right] \\ 
T(6,9) & =  \left[ \begin {array}{ccc} 0&100&0\\ \noalign{\medskip}1&0&100\end {array} \right] & 
 T(6,10) & =  \left[ \begin {array}{ccc} 100&0&100\\ \noalign{\medskip}0&1&0\end {array} \right] \\ 
T(6,6) & =  \left[ \begin {array}{cc} 100&0\\ \noalign{\medskip}0&1\end {array} \right] & 
 T(7,7) & =  \left[ \begin {array}{cc} 1&0\\ \noalign{\medskip}1&100\end {array} \right] \\ 
T(7,2) & =  \left[ \begin {array}{cc} 1000&1\\ \noalign{\medskip}0&1\end {array} \right] & 
 T(7,3) & =  \left[ \begin {array}{ccc} 1000&1000&1\\ \noalign{\medskip}0&0&1\end {array} \right] \\ 
T(7,4) & =  \left[ \begin {array}{cccc} 100&1000&1000&1\\ \noalign{\medskip}0&0&0&1\end {array} \right] & 
 T(8,9) & =  \left[ \begin {array}{ccc} 1&0&0\\ \noalign{\medskip}0&100&0\\ \noalign{\medskip}1&0&100\end {array} \right] \\ 
T(8,3) & =  \left[ \begin {array}{ccc} 1000&1&0\\ \noalign{\medskip}100&0&100\\ \noalign{\medskip}0&1&0\end {array} \right] & 
 T(8,4) & =  \left[ \begin {array}{cccc} 1000&1000&1&0\\ \noalign{\medskip}0&100&0&100\\ \noalign{\medskip}0&0&1&0\end {array} \right] \\ 
T(8,4) & =  \left[ \begin {array}{cccc} 100&1000&1000&1\\ \noalign{\medskip}0&0&100&0\\ \noalign{\medskip}0&0&0&1\end {array} \right] & 
 T(9,4) & =  \left[ \begin {array}{cccc} 1&0&0&0\\ \noalign{\medskip}100&100&1000&1000\\ \noalign{\medskip}1&0&0&100\end {array} \right] \\ 
T(9,4) & =  \left[ \begin {array}{cccc} 1000&1&0&0\\ \noalign{\medskip}0&100&100&1000\\ \noalign{\medskip}0&1&0&0\end {array} \right] & 
 T(9,4) & =  \left[ \begin {array}{cccc} 1000&1000&1&0\\ \noalign{\medskip}0&0&100&100\\ \noalign{\medskip}0&0&1&0\end {array} \right] \\ 
T(9,4) & =  \left[ \begin {array}{cccc} 100&1000&1000&1\\ \noalign{\medskip}100&0&0&100\\ \noalign{\medskip}0&0&0&1\end {array} \right] & 
 T(10,4) & =  \left[ \begin {array}{cccc} 1&0&0&0\\ \noalign{\medskip}100&100&1000&1000\\ \noalign{\medskip}0&100&0&0\end {array} \right] \\ 
T(10,4) & =  \left[ \begin {array}{cccc} 1000&1&0&0\\ \noalign{\medskip}0&100&100&1000\\ \noalign{\medskip}100&0&100&0\end {array} \right] & 
 T(10,4) & =  \left[ \begin {array}{cccc} 1000&1000&1&0\\ \noalign{\medskip}0&0&100&100\\ \noalign{\medskip}0&100&0&100\end {array} \right] \\ 
T(10,4) & =  \left[ \begin {array}{cccc} 100&1000&1000&1\\ \noalign{\medskip}100&0&0&100\\ \noalign{\medskip}0&0&100&0\end {array} \right] & 
 T(10,4) & =  \left[ \begin {array}{cccc} 100&100&1000&1000\\ \noalign{\medskip}0&100&0&0\\ \noalign{\medskip}1&0&0&100\end {array} \right] \\ 
T(10,4) & =  \left[ \begin {array}{cccc} 0&100&100&1000\\ \noalign{\medskip}100&0&100&0\\ \noalign{\medskip}0&1&0&0\end {array} \right] & 
 T(10,5) & =  \left[ \begin {array}{ccc} 0&100&100\\ \noalign{\medskip}100&0&100\\ \noalign{\medskip}0&1&0\end {array} \right] \\ 
T(10,8) & =  \left[ \begin {array}{ccc} 100&0&100\\ \noalign{\medskip}0&100&0\\ \noalign{\medskip}0&0&1\end {array} \right] & 
 \end{align*}

The essential class is: [4].
The essential class is of positive type.
An example is the path [4, 4, 4, 4].
The essential class is not a simple loop.
This spectral range will include the interval $[175.9930871, 1005.242481]$.
The minimum comes from the loop $[4, 4, 4, 4]$.
The maximim comes from the loop $[4, 4, 4, 4, 4]$.
These points will include points of local dimension [.628346304, 1.885322743].
The Spectral Range is contained in the range $[139.5519097, 1025.016616]$.
The minimum comes from the total row sub-norm of length 5. 
The maximum comes from the total row sup-norm of length 5. 
These points will have local dimension contained in [.614294428, 2.052681190].

There are 3 additional maximal loops.

Maximal Loop Class: [7].
The maximal loop class is a simple loop.
It's spectral radius is an isolated points of 100.00.
These points have local dimension 2.293082124.

Maximal Loop Class: [6].
The maximal loop class is a simple loop.
It's spectral radius is an isolated points of 100.00.
These points have local dimension 2.293082124.

Maximal Loop Class: [5].
The maximal loop class is a simple loop.
It's spectral radius is an isolated points of 100.9901951.
These points have local dimension 2.285974508.

The set of local dimensions will contain
\[[.6283,1.885]\cup \{ 2.286\}\cup \{ 2.293\}\]This has 3 components. 

The set of local dimensions is contained in
 \[[.6143,2.053]\cup \{ 2.286\}\cup \{ 2.293\}\]This has 3 components. 

\section{Details for Remark \ref{remark:conj}}

Below we have provided details to support our conjecture in Remark 
    \ref{remark:conj}.  
Let $\nu$ be the $m$-fold convolution of the Cantor measure with ratio 
    of contraction $1/d$.
We know that:
\begin{equation*}
\{\dim _{loc}\nu _{\pi }(x):x\in \text{supp }\nu _{\pi }\}\subseteq \{\dim
_{loc}\nu (x):x\in \text{supp }\nu ,x\neq 0,d+k\}
\end{equation*}
We wish to show that 
$\sup_{x}\dim _{loc}\nu _{\pi }(x)<\sup_{x\neq 0,m}\dim _{loc}\nu (x)$.
We do this by showing that the upper bound for 
    $\sup_{x\neq 0,d+k}\dim _{loc}\nu (x)$ is bounded below by 
    $b$ (dependent on $m$ and $d$), 
    $\sup_{x\neq 0,d+k}\dim _{loc}\nu (x)$ is bounded above by
    $a$ (dependent on $m$ and $d$), 
    and then observe that $a < b$ in every case except $m = 2, d =3$.

\begin{table}[tbp]
 \begin{tabular}{llllll}
$m$    & $d$   & Upper bound of $\dim\nu(x)$  & Upper bound of $\dim\nu_{\pi}(x)$  & Prop \ref{prop:shrink} & Depth of Norm \\
       &       & bounded below by  & bounded above by  & holds  & Searched \\
\hline
 2 &  3 &  1.261859507 &  1.261859507 & false &  10\\
 3 &  3 &  1.133544891 &  1.077324384 & true &  1\\
 4 &  3 &  1.058745493 &  1.049820435 & true &  2\\
 5 &  3 &  1.027566600 &  1.025209036 & true &  3\\
 6 &  3 &  1.014334772 &  1.011259593 & true &  2\\
 7 &  3 &  1.006057727 &  1.005988453 & true &  3\\
 8 &  3 &  1.003425327 &  1.002515549 & true &  3\\
 9 &  3 &  1.001330543 &  1.001153367 & true &  4\\
 10 &  3 &  1.000793712 &  1.000649489 & true &  3\\
 3 &  4 &  1.500000000 &  1.235839584 & true &  1\\
 4 &  4 &  1.321490682 &  1.166666667 & true &  1\\
 5 &  4 &  1.207518750 &  1.084691151 & true &  1\\
 6 &  4 &  1.132742274 &  1.075965367 & true &  2\\
 7 &  4 &  1.096322539 &  1.057813726 & true &  2\\
 8 &  4 &  1.060623161 &  1.035146820 & true &  2\\
 9 &  4 &  1.046554702 &  1.031111632 & true &  2\\
 10 &  4 &  1.028706359 &  1.027211789 & true &  2\\
 4 &  5 &  1.722706233 &  1.292029675 & true &  1\\
 5 &  5 &  1.515580565 &  1.188044511 & true &  1\\
 6 &  5 &  1.374997393 &  1.138225274 & true &  1\\
 7 &  5 &  1.273287000 &  1.162457996 & true &  1\\
 8 &  5 &  1.218846960 &  1.147466087 & true &  1\\
 9 &  5 &  1.160225830 &  1.139336787 & true &  1\\
 10 &  5 &  1.134263284 &  1.089576553 & true &  1\\
 5 &  6 &  1.934264036 &  1.317946834 & true &  1\\
 6 &  6 &  1.707969651 &  1.239189644 & true &  1\\
 7 &  6 &  1.547411229 &  1.167218645 & true &  1\\
 8 &  6 &  1.426500033 &  1.176448870 & true &  1\\
 9 &  6 &  1.357136478 &  1.171217752 & true &  1\\
 10 &  6 &  1.281621364 &  1.179936574 & true &  1\\
 6 &  7 &  2.137243123 &  1.366428213 & true &  1\\
 7 &  7 &  1.896901635 &  1.274709522 & true &  1\\
 8 &  7 &  1.720507429 &  1.212928829 & true &  1\\
 9 &  7 &  1.584821135 &  1.193520824 & true &  1\\
 10 &  7 &  1.502709990 &  1.190060439 & true &  1\\
 7 &  8 &  2.333333333 &  1.403398681 & true &  1\\
 8 &  8 &  2.082091702 &  1.322408586 & true &  1\\
 9 &  8 &  1.892690635 &  1.250208526 & true &  1\\
 10 &  8 &  1.745082630 &  1.215234300 & true &  1\\
 8 &  9 &  2.523719013 &  1.449978323 & true &  1\\
 9 &  9 &  2.263677476 &  1.362744592 & true &  1\\
 10 &  9 &  2.063319598 &  1.295581060 & true &  1\\
 9 &  10 &  2.709269961 &  1.490378946 & true &  1\\
 10 &  10 &  2.441914915 &  1.409160390 & true &  1
\end{tabular}
\caption{Table caption here} 
\label{tab:conj}
\end{table}

\end{document}